\def\csname opt@stmaryrd.sty\endcsname
\newcommand{\Q}{\mathbb{Q}}
\newcommand{\Qbar}{\overline{\Q}}
\newcommand{\R}{\mathbb{R}}
\newcommand{\C}{\mathbb{C}}
\newcommand{\Z}{\mathbb{Z}}
\newcommand{\Aut}{{\rm Aut}}
\renewcommand{\P}{\mathbb{P}}
\newcommand{\Jac}{\operatorname{Jac}}
\newcommand{\gon}{\operatorname{gon}}
\newcommand{\new}{\operatorname{new}}
\newcommand{\airr}{\operatorname{a.irr}}
\DeclareFontFamily{U}{wncy}{}
    \DeclareFontShape{U}{wncy}{m}{n}{<->wncyr10}{}
    \DeclareSymbolFont{mcy}{U}{wncy}{m}{n}
    \DeclareMathSymbol{\Sh}{\mathord}{mcy}{"58} 
\newtheorem{theorem}{Theorem}[section]
\newtheorem{lemma}[theorem]{Lemma}
\newtheorem{proposition}[theorem]{Proposition}
\newtheorem{corollary}[theorem]{Corollary}
\theoremstyle{definition}
\newtheorem{definition}[theorem]{Definition}
\newtheorem{remark}[theorem]{Remark}
\newcommand{\freddy}[1]{{\color{magenta} \textsf{$\spadesuit\spadesuit\spadesuit$ Freddy: [#1]}}}
\title{Bielliptic Shimura curves $X_0^D(N)$ with nontrivial level}
\begin{document}

\author{\sc Oana Padurariu}
\address{Oana Padurariu \\
Max-Planck-Institut für Mathematik Bonn\\
Germany}
\urladdr{https://sites.google.com/view/oanapadurariu/home}
\email{oana.padurariu11@gmail.com}

\author{\sc Frederick Saia}
\address{Frederick Saia \\
University of Illinois Chicago\\
USA}
\urladdr{https://fsaia.github.io/site/}
\email{fsaia@uic.edu}

\maketitle
\begin{abstract}
 We work towards completely classifying all bielliptic Shimura curves $X_0^D(N)$ with nontrivial level $N$ coprime to $D$ and their bielliptic involutions, extending a result of Rotger that provided such a classification for level one. Combined with prior work, this allows us to determine the list of all relatively prime pairs $(D,N)$ for which $X_0^D(N)$ has infinitely many degree $2$ points. As an application, we use these results to make progress on determining which curves $X_0^D(N)$ have sporadic points. Using tools similar to those that appear in this study, we also determine all of the geometrically trigonal Shimura curves $X_0^D(N)$ with $\gcd(D,N)=1$ (none of which are trigonal over $\Q$). 
\end{abstract}


\section{Introduction}

The study of low degree points on classical families of modular curves over $\mathbb{Q}$, including $X_0(N)$ and $X_1(N)$, is a subject of great modern interest in number theory, by virtue of its relationship to the study of rational isogenies and torsion points of elliptic curves over number fields. In this work, we are interested in the existence of low degree points on Shimura curves, which provide a generalization of modular curves. Shimura curves parameterize abelian surfaces with quaternionic multiplication and extra structure, and have canonical algebraic models as given in \cite{Sh67}. In particular, our study is focused on quadratic points on the family $X_0^D(N)$ of Shimura curves over $\mathbb{Q}$.

The $D=1$ case of $X_0^1(N) \cong Y_0(N)$ recovers the elliptic modular curve setting. Whereas the rational points on $Y_0(N)$ were the subject of careful study in \cite{Maz78}, we have that $X_0^D(N)(\mathbb{R}) = \emptyset$ when $D>1$ \cite[Thm. 0]{Sh75}. Therefore, one first asks about the degree two points on these curves, and in this work we are specifically interested in which curves $X_0^D(N)$ have \emph{infinitely many} quadratic points. We set the following notation:

\begin{definition}
    Let $X$ be a curve over a number field $F$. The \textbf{arithmetic degree of irrationality} of $X$ is the positive integer
    \[ \textnormal{a.irr}_F(X) := \text{min}\left\{ d : \left(\bigcup_{[L : F] = d} X(L) \right) \text{ is infinite}\right \}. \]
\end{definition}

We are interested in which pairs $(D,N)$, with $N$ coprime to $D$, have $\text{a.irr}_{\mathbb{Q}}(X_0^D(N)) \leq 2$. A curve can have infinitely many rational points only if it is of genus at most one by Faltings' Theorem \cite{Fal83}. Recall the following definition:

\begin{definition}
    For a curve $X$ over a  number field $F$, we say that $X$ is \textbf{bielliptic (over $F$)} if there exists an elliptic curve $E$ over $F$ and a degree $2$ map $X \rightarrow E$ over $F$. We say that $X$ is \textbf{geometrically bielliptic} if it is bielliptic over some finite extension of $F$. 
\end{definition}

A result of Hindry \cite[Remarque, p. 221]{Hindry} and of Harris--Silverman \cite[Corollary 3]{HaSi91}\footnote{Hindry originally proved the result conditional on the Mordell--Lang conjecture, which was proven by Faltings \cite{Fal91} a few years later. The result was reproven by Harris--Silverman following the announcement of and similarly using Faltings' theorem.} states that if $g(X) \geq 2$ and $\textnormal{a.irr}_F(X) = 2$, then $X$ is either hyperelliptic or is bielliptic with a degree $2$ map to an elliptic curve over $F$ of positive rank.\footnote{A geometric analogue for degree $3$ follows from work of Abramovich--Harris \cite[Theorem 1]{AH91}: if $$\min_{F'/F \text{ finite}}\textnormal{a.irr}_L(X_{F'}) = 3,$$ then $X_{\overline{F}}$ is trigonal or is trielliptic with a degree $3$ map to an elliptic curve of positive rank. (There are some known errors in \cite{AH91}, regarding which we refer to \cite[p.236]{DF93} and \cite[p.4]{KV22}, but they do not interfere with this result.) This pattern fails for degrees greater than $3$; see \cite{DF93}, and see \cite{KV22} for further results in this vein.} 

In the $D=1$ case, all hyperelliptic modular curves $X_0(N)$ of genus at least $2$ were determined by Ogg \cite{Ogg74}, and all bielliptic modular curves $X_0(N)$ were determined by Bars \cite{Bars99}. Further, Bars determined which bielliptic curves $X_0(N)$ have a bielliptic quotient of positive rank over $\mathbb{Q}$, and thus determined all curves $X_0(N)$ with $\text{a.irr}_{\mathbb{Q}}(X_0(N)) = 2$ \cite[Thm. 4.3]{Bars99}. 

We therefore restrict ourselves to the $D>1$ case from this point onwards. Voight \cite{Voight09} listed all $(D,N)$ for which $X_0^D(N)$ has genus zero:
\[ \{(6,1), (10,1), (22,1)  \}, \]
and genus one:
\[ \{(6,5), (6,7), (6,13), (10,3), (10,7), (14,1), (15,1), (21,1), (33,1), (34,1), (46,1)   \}. \]

Work of Ogg \cite{Ogg83} and of Guo--Yang \cite{GY17} determines all $(D,N)$ for which $X_0^D(N)$ is hyperelliptic over $\mathbb{Q}$:
\begin{align*} 
\{ &(6,11), (6,19), (6,29), (6,31), (6,37), (10,11),(10,23), (14,5), (15,2), \\
& (22,3), (22,5), (26,1), (35,1), (38,1), (39,1), (39,2) , (51,1), (55,1), (58,1), (62,1), \\
& (69,1), (74,1), (86,1), (87,1), (94,1), (95,1), (111,1), (119,1), (134,1), \\
& (146,1), (159,1), (194,1), (206,1) \}. 
\end{align*}

Rotger \cite[Theorem 7]{Rotger02} determined all of the bielliptic Shimura curves $X_0^D(1)$. Those which are of genus at least $2$ and are not also hyperelliptic are as follows:
\begin{align*}  
D \in \{&57,65,77,82,85,106,115,118 ,122,129,143,\\
&166,178,202,210,215,314, 330,390,462,510,546\}. 
\end{align*}

Moreover, Rotger determined which of these curves have bielliptic quotients of positive rank over $\mathbb{Q}$, and thus completed the determination of Shimura curves $X_0^D(1)$ with $\textnormal{a.irr}_{\mathbb{Q}}(X_0^D(1)) = 2$ \cite[Theorem 9]{Rotger02}. Excluding the curves that are either hyperelliptic or of genus at most one, we are left with 
\[ D \in \{57,65,77,82,106,118 ,122,129,143,166,210,215,314, 330,390,510,546 \}. \] 

In this work, we extend Rotger's results to the $N>1$ case. The following result settles the determination of the geometrically bielliptic curves $X_0^D(N)$ and their bielliptic quotients, aside from two possible exceptions:

\begin{theorem}\label{theorem: main_thm}
Let $D$ be an indefinite rational quaternion discriminant and $N > 1$ coprime to $D$.  If $X_0^D(N)$ is geometrically bielliptic then any bielliptic involution is one of the Atkin--Lehner involutions $w_m$, and $(D,N,m)$ is listed in \cref{table: squarefree} or \cref{table: non-squarefree}, possibly except when $(D,N)$ is one the following two pairs:
\[
(6,25),(10,9).
\]

\end{theorem}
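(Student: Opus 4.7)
The plan is to extend Rotger's approach for the level-one case \cite{Rotger02} to nontrivial level $N$ coprime to $D$. The argument has two parts: a structural reduction showing that any bielliptic involution of $X_0^D(N)$ must be one of the Atkin--Lehner involutions $w_m$, followed by an explicit enumeration of the triples $(D,N,m)$ for which $X_0^D(N)/\langle w_m\rangle$ is of genus one.

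For the structural reduction, I would combine two classical inputs. First, a geometrically bielliptic curve of sufficiently high genus has a unique bielliptic involution, which is therefore central in its geometric automorphism group; the Castelnuovo--Severi inequality, applied to the bielliptic map together with an appropriate Atkin--Lehner quotient of $X_0^D(N)$, makes ``sufficiently high'' explicit and, combined with Voight's genus formulas for the family, produces an a priori bound on $g(X_0^D(N))$ beyond which the curve cannot be geometrically bielliptic. This restricts attention to a finite, explicit list of pairs $(D,N)$. Second, the description of $\Aut(X_0^D(N)_{\overline\Q})$ in the spirit of Kontogeorgis--Rotger shows that, outside a short list of exceptional pairs, this group coincides with the Atkin--Lehner group $W(D,N)$; any central involution of $W(D,N)$ has the form $w_m$ for some Hall divisor $m$ of $DN$, which gives the desired reduction.

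The enumerative step then asks, for each remaining $(D,N)$ and each Hall divisor $m$, whether $X_0^D(N)/\langle w_m\rangle$ has genus one. By Riemann--Hurwitz this reduces to counting the fixed points of $w_m$ on $X_0^D(N)$, which are CM points enumerated via classical formulas of Ogg, Ribet and Rotger in terms of embedding numbers of quadratic orders of discriminants tied to $-m$ or $-4m$ into Eichler orders of level $N$ inside the quaternion algebra of discriminant $D$. The triples satisfying the resulting equation populate \cref{table: squarefree} and \cref{table: non-squarefree}. The main obstacle, and the reason for the two excluded pairs $(6,25)$ and $(10,9)$, lies in the small-genus regime where the uniqueness and centrality arguments for the bielliptic involution break down: here one must in principle enumerate all geometric involutions of $X_0^D(N)$, not only those in $W(D,N)$, and for these two pairs the techniques available do not suffice to rule out an exotic bielliptic involution, leaving them as the stated possible exceptions.
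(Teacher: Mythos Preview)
Your broad two-step plan---reduce to Atkin--Lehner involutions, then enumerate genus-one quotients via Riemann--Hurwitz and Ogg's fixed-point formulas---matches the paper, and your explanation of why $(6,25)$ and $(10,9)$ remain open is on target. However, your mechanism for producing a \emph{finite} list of candidate pairs $(D,N)$ has a gap. You appeal to the Castelnuovo--Severi inequality together with genus formulas, but Castelnuovo--Severi applied to a bielliptic map and an Atkin--Lehner quotient only tells you that the two maps must factor compatibly once a certain inequality involving the quotient genus holds (this is essentially the ``more than $8$ fixed points'' criterion of \cref{lemma: more-than-8}); it does not by itself bound $g(X_0^D(N))$ absolutely. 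The paper instead uses Abramovich's theorem (\cref{theorem: Abr}), which bounds the genus linearly in the geometric gonality; since a bielliptic curve has gonality at most $4$, this immediately gives $g \le 39$. Combined with an explicit lower bound for the genus (\cref{genus_bnd_lemma}) and, crucially, the Harris--Silverman observation (\cref{HS_prop}) that biellipticity of $X_0^D(N)$ forces $X_0^D(1)$ to be hyperelliptic, bielliptic, or of genus at most one---restricting $D$ to Rotger's and Ogg's finite lists---this yields the $357$ candidate pairs. You omit both Abramovich's bound and this reduction on $D$.

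A second point: the reduction to $\Aut(X_0^D(N))=W_0(D,N)$ via Kontogeorgis--Rotger handles most candidates, but leaves $25$ pairs untouched. For these the paper does \emph{not} rely on centrality of a unique bielliptic involution alone. It uses several additional tools: the fixed-point criterion of \cref{lemma: more-than-8} to eliminate most, the divisibility criterion \cref{not-div}, a direct Castelnuovo--Severi argument for $(34,7)$ (\cref{34-7-lemma}), and, for the genus-$5$ curves $X_0^{21}(5)$ and $X_0^{22}(7)$, the observation that a curve with five bielliptic involutions is a Humbert curve whose Jacobian is geometrically a product of elliptic curves---contradicted here by computing (via Ribet's isogeny) a geometrically simple abelian-surface factor. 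Your proposal does not account for this layer of case analysis, which is where most of the actual work beyond the level-one case lies.
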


For $(6,25)$ and $(10,9)$, although there are bielliptic involutions of Atkin--Lehner type, we remain unsure of whether the Shimura curve $X_0^D(N)$ has any bielliptic involutions that are not Atkin--Lehner involutions. See \cref{genus_5_not_squarefree_rmk} for comments on these two curves. 

To reach this result, we begin with background on Shimura curves and their local points in \S $2$ and \S $3$, and relevant algebro-geometric results in \S $4$. The proof then comes in \S $5$, with the following structure:
\begin{itemize}
    \item We reduce to finitely many candidate curves, using an explicit lower bound on the gonality of $X_0^D(N)$ based on an explicit lower bound on $g(X_0^D(N))$ and a result of Abramovich (see \cref{theorem: Abr}) that relates the gonality and the genus of a Shimura curve. 
    \item For most of these candidates, we are able in \S 5.1 to use results on $\text{Aut}(X_0^D(N))$ to restrict consideration to Atkin--Lehner involutions of $X_0^D(N)$. 
    \item We then determine which candidate curves have a genus one Atkin--Lehner quotient, and work to decide which such quotients are elliptic curves over $\mathbb{Q}$. 
\end{itemize}

In the course of our proof of \cref{theorem: main_thm}, we also determine which genus one Atkin--Lehner quotients are elliptic curves \emph{of positive rank} over $\mathbb{Q}$.

\begin{theorem}\label{delta_eq_2_biell_thm}
Suppose that $N>1$ is relatively prime to $D$, that $g(X_0^D(N)) \geq 2$ and that $X_0^D(N)$ is not hyperelliptic. Then $\textnormal{a.irr}_{\mathbb{Q}}(X_0^D(N)) = 2$, necessarily by virtue of $X_0^D(N)$ being bielliptic with a degree $2$ map to an elliptic curve of positive rank over $\mathbb{Q}$, if and only if 
    \begin{align*} 
    (D,N) \in \{&(6,17),(6,23),(6,41),(6,71),(10,13), \\
    &(10,17),(10,29),(22,7),(22,17)\}.
    \end{align*}
\end{theorem}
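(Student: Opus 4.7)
The plan is to combine the Harris--Silverman dichotomy \cite[Corollary 3]{HaSi91} with the classification provided by \cref{theorem: main_thm}. Under the running hypotheses that $g(X_0^D(N)) \geq 2$ and that $X_0^D(N)$ is not hyperelliptic, the Harris--Silverman result says that $\textnormal{a.irr}_\Q(X_0^D(N)) = 2$ holds if and only if $X_0^D(N)$ is bielliptic over $\Q$ with a degree $2$ map to an elliptic curve of positive Mordell--Weil rank over $\Q$. Since all Atkin--Lehner involutions $w_m$ are defined over $\Q$, the task reduces to running through the finite list of triples $(D,N,m)$ in \cref{table: squarefree} and \cref{table: non-squarefree}, determining in each case whether the quotient $X_0^D(N)/\langle w_m \rangle$ is an elliptic curve over $\Q$, and if so computing its rank.

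For each $(D,N,m)$ from the tables, I would first restrict to those $m$ for which $X_0^D(N)/\langle w_m\rangle$ has genus one (discarding higher-genus quotients immediately). For each such Atkin--Lehner quotient of genus one, I would identify it as a specific elliptic curve over $\Q$ by computing its $j$-invariant, appealing to the explicit models and formulas for Atkin--Lehner quotients of Shimura curves in the existing literature (e.g., the equations of Guo--Yang \cite{GY17}, the work of Molina and of Gonz\'alez--Rotger on quaternionic modular forms and their $q$-expansions, or direct computation of CM point coordinates via Cerednik--Drinfeld together with the Eichler--Selberg trace formula to distinguish isogeny classes). Once each such quotient is matched to an LMFDB/Cremona label, its Mordell--Weil rank is a finite check, and the pairs $(D,N)$ whose tables of quotients include at least one genus-one quotient which (i) has a rational point and (ii) has positive rank over $\Q$ are exactly those in the claimed list.

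To close out the argument I would handle the two exceptional pairs $(6,25)$ and $(10,9)$ separately, noting that neither appears in the asserted list: one verifies that no Atkin--Lehner quotient of $X_0^6(25)$ or $X_0^{10}(9)$ provides a genus-one quotient that is an elliptic curve of positive rank over $\Q$, and since by \cref{theorem: main_thm} any additional (non-Atkin--Lehner) bielliptic involution of these curves, if it exists, is defined only geometrically, it cannot witness $\textnormal{a.irr}_\Q = 2$ via a $\Q$-rational degree $2$ map to a positive-rank elliptic curve. The main obstacle is the explicit identification of each genus-one Atkin--Lehner quotient as an honest elliptic curve with a Cremona label, since in many of these cases no off-the-shelf equation is available; this requires extracting enough arithmetic data (conductor, $a_p$ for several primes $p \nmid DN$, etc.) from the Shimura curve's moduli interpretation to pin the quotient down up to isogeny, after which the rank is accessible from standard tables.
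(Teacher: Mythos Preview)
Your overall strategy matches the paper's---invoke Harris--Silverman, use \cref{theorem: main_thm} to restrict to Atkin--Lehner involutions, then run through the finite list of genus-one quotients---but there is a genuine gap in your treatment of the exceptional pairs $(6,25)$ and $(10,9)$. \cref{theorem: main_thm} does \emph{not} assert that any non-Atkin--Lehner bielliptic involution on these two curves is defined only geometrically; it simply leaves open whether such involutions exist at all (cf.\ \cref{genus_5_not_squarefree_rmk}). Your argument therefore does not rule out a $\Q$-rational non-Atkin--Lehner bielliptic involution whose quotient is a positive-rank elliptic curve. The paper closes this gap differently: by Ribet's isogeny (extended to non-squarefree level as in \cref{ribet_general_level_rmk}), $\Jac(X_0^D(N))$ is isogenous over $\Q$ to $J_0(DN)^{D\text{-new}}$, and one checks directly in Magma that for these two pairs this abelian variety contains no elliptic factor of positive rank over $\Q$. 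Hence no degree-$2$ map over $\Q$ to a positive-rank elliptic curve can exist, whatever the nature of the involution.

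On the methodological side, your plan for the remaining (Atkin--Lehner) cases---compute $j$-invariants, produce explicit models, and match to Cremona labels---is workable in principle but is precisely the bottleneck the paper avoids. The paper never writes down equations for most of these quotients. To decide whether a genus-one quotient $X_0^D(N)/\langle w_m\rangle$ has a $\Q$-point, it either exhibits a $\Q$-rational CM point via \cite[Cor.~5.14]{GR06} (see \cref{CM_points_table}) or rules out local points using the criteria of Ogg and Clark recalled in \S 3. To compute ranks, it again uses Ribet's isogeny: the rank of $\Jac(X_0^D(N)/\langle w_m\rangle)$ is that of the appropriate $w_m$-eigenspace inside $J_0(DN)^{D\text{-new}}$, which Magma computes from modular symbols. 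This bypasses the ``main obstacle'' you identify and is what makes the case-by-case verification in \cref{table: squarefree} and \cref{table: non-squarefree} tractable.
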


\cref{delta_eq_2_biell_thm} completes the determination of curves $X_0^D(N)$ with infinitely many degree $2$ points. (See also the rephrasing to this aim as \cref{delta_eq_2}.) 

As an application of our main results, in \S 6 we improve on a result of the second named author from \cite[\S 10]{Saia24} (recalled as \cref{sp_thm_Saia}) concerning sporadic points on the Shimura curves $X_0^D(N)$ and $X_1^D(N)$. An abridged version of our main result from this section is as follows; see \cref{sporadic_thm} for the full statement. 

\begin{theorem}
    \begin{enumerate}
        \item For all but at most $129$ relatively prime pairs $(D,N)$ with $D>1$ and $\textnormal{gcd}(D,N) = 1$, the Shimura curve $X_0^D(N)$ has a sporadic CM point. For at least $73$ of these pairs, this curve has no sporadic points.
        \item For all but at most $321$ relatively prime pairs $(D,N)$ with $D>1$ and $\textnormal{gcd}(D,N) = 1$, the Shimura curve $X_1^D(N)$ has a sporadic CM point. For at least $58$ of these pairs, this curve has no sporadic points.
    \end{enumerate}
\end{theorem}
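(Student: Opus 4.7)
The plan is to upgrade \cref{sp_thm_Saia}, the main sporadic-point result of \cite{Saia24}, by inserting the newly completed classification of Shimura curves with $\textnormal{a.irr}_{\mathbb{Q}} \leq 2$. Combining \cref{theorem: main_thm} and \cref{delta_eq_2_biell_thm} with the existing hyperelliptic list of Ogg and Guo--Yang and the genus zero and one lists of Voight delineates exactly those pairs $(D,N)$ for which $X_0^D(N)$ has infinitely many quadratic points; for every other pair with $g(X_0^D(N)) \geq 2$ we then know $\textnormal{a.irr}_{\mathbb{Q}}(X_0^D(N)) \geq 3$, so that any CM point of degree $\leq 2$ is automatically sporadic.

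For part (1) the strategy is then to produce, for each such pair, a CM point of degree $\leq 2$. I would use the standard formulas for the residue field of a CM point on $X_0^D(N)$ attached to an imaginary quadratic order $\O$ of discriminant $\Delta$ with prescribed splitting behavior at the primes of $D$ and $N$, minimize the resulting degree over small-discriminant $\O$, and record when the minimum is at most $2$. The pairs where this search fails account for the at-most-$129$ undetermined pairs. The $73$ pairs guaranteed to have no sporadic point consist of the genus $0$ and $1$ cases from Voight's list together with those pairs from \cref{delta_eq_2_biell_thm} (and the Ogg--Guo--Yang hyperelliptic list) for which the bielliptic or hyperelliptic $\Q$-structure accounts for \emph{all} degree $2$ points and no sporadic point of higher degree can be ruled in.

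For part (2) on $X_1^D(N)$, I would exploit the natural forgetful cover $\pi\colon X_1^D(N) \to X_0^D(N)$, whose generic fiber is an $(\O/N)^\times/\{\pm 1\}$-torsor. A degree $d$ CM point on $X_0^D(N)$ lifts to CM points on $X_1^D(N)$ whose degrees can be computed from the action of Galois on the level structure; on CM points this is governed by class field theory for the ring class field of $\O$, yielding an explicit combinatorial expression. The larger exceptional count $321$ reflects that these lifted degrees are typically larger, so producing a sporadic CM point now requires either an especially favorable splitting at $D$ or a small cyclotomic-type contribution at $N$; the $58$ pairs with no sporadic points are those on which every lift provably sits above the arithmetic gonality bound for $X_1^D(N)$.

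The main obstacle is combinatorial bookkeeping rather than a single hard step: one must maintain, for each pair $(D,N)$ inside a suitably large finite search range (cut off by the genus lower bound used in the proof of \cref{theorem: main_thm}), both a provable lower bound on $\textnormal{a.irr}_{\mathbb{Q}}$ and the minimum CM-point degree, and then compare them pair by pair. The exceptional $129$ and $321$ pairs are precisely those on which current tools leave a gap between these two quantities; the new input from \cref{theorem: main_thm} and \cref{delta_eq_2_biell_thm} is exactly what is needed to shrink the gap left open in \cref{sp_thm_Saia} down to the bounds stated here.
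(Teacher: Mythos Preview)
Your overall strategy matches the paper's: feed the completed list of pairs with $\textnormal{a.irr}_{\mathbb{Q}}(X_0^D(N)) = 2$ (packaged as \cref{delta_eq_2}) back into \cref{sp_thm_Saia}, and compare the computed least CM-point degrees $d_{\textnormal{CM}}$ from \cite{Saia24} against $\textnormal{a.irr}_{\mathbb{Q}}$ pair by pair.

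Two places where your write-up is muddier than the actual argument and should be tightened. First, the $73$ pairs with no sporadic point on $X_0^D(N)$ are exactly the $73$ pairs in \cref{delta_eq_2}, and the reason is a one-liner: $\textnormal{a.irr}_{\mathbb{Q}} = 2$ forces any sporadic point to have degree $1$, but $X_0^D(N)(\mathbb{Q}) = \emptyset$ for $D>1$. There is nothing to check about the hyperelliptic or bielliptic map ``accounting for all degree $2$ points'', and your phrase ``no sporadic point of higher degree can be ruled in'' is a contradiction in terms, since sporadic means degree strictly below $\textnormal{a.irr}$. Second, for $X_1^D(N)$ the paper does not lift CM points through $\pi$ and analyze Galois on level structure. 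For the four new no-sporadic-CM pairs it uses the covering bound $\textnormal{a.irr}_{\mathbb{Q}}(X_1^D(N)) \le \deg(\pi)\cdot \textnormal{a.irr}_{\mathbb{Q}}(X_0^D(N)) = \max\{2,\phi(N)\}$ and checks $d_{\textnormal{CM}}(X_1^D(N))$ against this; for producing sporadic CM points it simply computes $d_{\textnormal{CM}}(X_1^D(N))$ directly (already available from \cite{Saia24}) and tests whether it equals $2$ while $\textnormal{a.irr}_{\mathbb{Q}}(X_0^D(N)) > 2$. No per-pair class-field-theoretic lifting is carried out beyond what is packaged into $d_{\textnormal{CM}}$.
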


Finally, in \cref{trigonal_section}, we prove that there are exactly $5$ geometrically trigonal Shimura curves $X_0^D(N)$ with $\gcd(D,N)=1$. A curve $X_0^D(N)$ with $D>1$ cannot have an odd degree map to $\mathbb{P}^1_{\mathbb{Q}}$, as these curves have no real points, so these curves \emph{are not} trigonal over $\Q$. See \cref{trigonal_def} for the relevant definition and \cref{proposition: trigonal-prop} for the main result.

All computations described in this paper were performed using the Magma computer algebra system \cite{magma}, and all relevant code can be found in \cite{Rep}. 

\subsection*{Acknowledgements}
It is a pleasure to thank Eran Assaf, Andrea Bianchi, Pete L. Clark, Samuel Le Fourn, Davide Lombardo, Ciaran Schembri, and John Voight for helpful conversations. We thank Noam Elkies for a conversation that led us to recognize an error in our result on trigonal curves $X_0^D(N)$ in a prior version of this paper. We thank Pete L. Clark and Pieter Moree for useful comments on an earlier version. We are also thankful for the feedback of three anonymous referees, which helped us improve the article. O.P. is very grateful to the Max-Planck-Institut für Mathematik Bonn for their hospitality and financial support.

\section{Some background on Shimura curves}

Let $D$ be an indefinite rational quaternion discriminant, i.e., the product of an even number of distinct prime numbers, and let $B_D$ denote the unique (up to isomorphism) quaternion algebra over $\mathbb{Q}$ of discriminant $D$. Let $N$ be a positive integer which is relatively prime to $D$. The curve $X_0^D(N)$ can then be described up to isomorphism as the coarse space for either of the following moduli problems:
\begin{itemize}
    \item tuples $(A,\iota,\lambda,Q)$, where $A$ is an abelian surface, $\iota: \mathcal{O} \hookrightarrow \text{End}(A)$ is an embedding of a maximal order $\mathcal{O} \subseteq B_D$ into the endomorphism ring of $A$, $\lambda$ is a principal polarization of $A$ which is compatible\footnote{See \cite[\S 2.1]{Saia24} for details.} with $\iota$ and $Q \leq A[N]$ is a cyclic $\mathcal{O}$-submodule of rank $2$ as a module over $\mathbb{Z}/N\mathbb{Z}$. 
    \item triples $(A,\iota,\lambda)$, where $A$ is an abelian surface, $\iota : \mathcal{O}_N \hookrightarrow \text{End}(A)$ is an embedding of an Eichler order $\mathcal{O}_N \subseteq B_D$ of level $N$ and $\lambda$ is a principal polarization compatible with $\iota$.
\end{itemize}
Similar to the first interpretation above, the curve $X_1^D(N)$ parametrizes triples $(A,\iota,\lambda,P)$ where $(A,\iota,\lambda)$ is as in the first interpretation above and $P \in A[N]$ is of order $N$. We call the data of $(A,\iota)$ as in any of the interpretations above a \textbf{QM abelian surface}; Shimura curves parametrize polarized QM abelian surfaces with additional structure. 

There is a natural covering map $X_1^D(N) \rightarrow X_0^D(N)$ of degree $\frac{\phi(N)}{2}$. On the level of moduli, this is described as $[(A,\iota,\lambda,P)] \mapsto [(A,\iota, \lambda,\iota(\mathcal{O}) \cdot P)]$, where $\iota(\mathcal{O}) \cdot P$ is the $\mathcal{O}$-cyclic subgroup of $A[N]$ generated by $P$. While the curves $X_0^D(N)$ will be the main interest in this work, the curves $X_1^D(N)$ will also come into play when we study sporadic points on both families in \S 6. 

\subsection{CM points and embedding numbers} 
In this work, we will mainly be concerned with the arithmetic of these Shimura curves as algebraic curves over $\mathbb{Q}$, and not specifically with their moduli interpretations. The place where the moduli interpretation will be most relevant will be in our discussion of CM points. 

\begin{definition}
    Let $K$ be an imaginary quadratic number field. A point $x \in X_0^D(N)(\overline{\Q})$ or $X_1^D(N)(\overline{\Q})$ is a \textbf{$K$-CM point} if it is induced by a QM abelian surface $(A,
    \iota)$ with either of the following equivalent properties:
    \begin{itemize}
        \item $A$ is geometrically isogeneous to $E^2$, where $E$ is an elliptic curve with $K$-CM.
        \item The ring $\text{End}(A,\iota)$ of $\iota(\mathcal{O})$-equivariant endomorphisms of $A$ is an order in $K$.
    \end{itemize}
    We call $x$ a \textbf{CM point} if it is a $K$-CM point for some imaginary quadratic field $K$.
\end{definition}

\begin{remark}
    It is common to define the notion of a CM point on a modular or Shimura curve first for points over $\C$ from the complex-analytic perspective. CM points are always algebraic, though, so the above definition is justified. More specifically, for any $K$-CM point $x \in X_0^D(N)$, the corresponding residue field $\mathbb{Q}(x)$ is either a ring class field of the CM field $K$, or an index $2$ subfield of a ring class field (necessarily totally complex if $D>1$) by \cite[Main Thm. 1]{Sh67}. 
\end{remark}
As for the modular curves $X_0(N)$, one often seeks to attach a specified imaginary quadratic order $R$ of $K$ to a $K$-CM point on the Shimura curve $X_0^D(N)$, and it seems that our definition above provides a clear way to do so: take $R := \text{End}(A,\iota)$. The catch here is that we have provided two moduli interpretations for points on $X_0^D(N)$: should we take $\iota$ to be a QM structure by a maximal order (such that our moduli datum also has the information of a certain $\mathcal{O}$-cyclic subgroup of $A[N]$), or by an Eichler order of level $N$? Both are reasonable choices, and they provide the same set of $K$-CM points, but it is important to distinguish between these choices as they alter the notion of an $R$-CM point for a fixed $R$ (see \cite[Remark 2.9]{Saia24} for a related remark). 

We will need to assign imaginary quadratic orders to CM points in two places in this paper: when applying results of Ogg \cite{Ogg83} (see \cref{thm_local_emb_Ogg} and \cref{thm_Ogg_fixed_pts}) to count the fixed points of Atkin--Lehner involutions, and when applying a result of Gonz\'alez--Rotger \cite[Cor 5.14]{GR06} on the residue fields of CM points on Atkin--Lehner quotients of $X_0^D(N)$. All of these results use the correspondence between $R$-CM points on $X_0^D(N)$ and optimal embeddings of orders in quadratic number fields into Eichler orders in $B_D$ -- see \cref{opt_emb_def} and \cref{Eichler-Thm}. This particular correspondence makes use of the second moduli interpretation we gave for $X_0^D(N)$, involving Eichler orders of level $N$. Thus, when we specify CM orders in this work, we will mean according to this interpretation. 

We recall here the genus formula for $X_0^D(N)$, which can be found, for example, in \cite[Thm. 39.4.20]{Voight21}. First, some notation: Let $\varphi$ and $\psi$ be the multiplicative functions such that, for every prime $p$ and positive integer $k$,
\[
\varphi(p^k) = p^k - p^{k-1} \quad \text{ and } \quad \psi(p^k) = p^k + p^{k-1},
\]
and let $\left(\frac{\cdot}{\cdot}\right)$ denote the Kronecker quadratic symbol.

\begin{proposition}
For $D$ the discriminant of an indefinite quaternion algebra over $\Q$, $N$ a positive integer coprime to $D$ and $k \in \{3,4\}$, define
\[
e_k(D,N) \colonequals \prod_{p|D} \left(1 - \left(\frac{-k}{p}\right)\right) \prod_{q \parallel N} \left(1 + \left(\frac{-k}{q}\right)\right) \prod_{q^2|N}\delta_q(k) ,
\]
where 
\[   
\delta_q(k) = 
     \begin{cases}
       2 &\quad\text{if } \left(\frac{-k}{q}\right) = 1, \\
       0 &\quad\text{otherwise.}  \\
     \end{cases}
\]
Then, for $D > 1$,
\[
g(X_0^D(N)) = 1 + \frac{\varphi(D)\psi(N)}{12} - \frac{e_4(D,N)}{4} - \frac{e_3(D,N)}{3}.
\]
\end{proposition}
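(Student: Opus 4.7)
The plan is to work from the complex-analytic uniformization of $X_0^D(N)$. Let $\mathcal{O}_N \subseteq B_D$ be an Eichler order of level $N$, fix a splitting $B_D \otimes_{\mathbb{Q}} \mathbb{R} \cong M_2(\mathbb{R})$, and let $\Gamma_0^D(N) \subset \mathrm{PSL}_2(\mathbb{R})$ be the image of the norm-one units $\mathcal{O}_N^1$. Since $D > 1$ forces $B_D$ to be a division algebra, this Fuchsian group is cocompact with no cusps, and $X_0^D(N)^{\mathrm{an}} \cong \Gamma_0^D(N) \backslash \mathcal{H}$ as a compact Riemann surface.

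I would then apply the standard Gauss--Bonnet genus formula for a cocompact Fuchsian group whose elliptic cycles have orders $m_1,\dots,m_r$:
\[
2g - 2 \;=\; \frac{\mathrm{Vol}(\Gamma_0^D(N) \backslash \mathcal{H})}{2\pi} \;-\; \sum_{i=1}^r \left(1 - \frac{1}{m_i}\right).
\]
Any torsion element of $\Gamma_0^D(N)$ arises from an embedding of the cyclotomic ring $\mathbb{Z}[\zeta_{2m}]$ into $\mathcal{O}_N$, and imaginary quadratic cyclotomic orders exist only for $2m \in \{4,6\}$. Hence the only possible cycle orders are $m \in \{2,3\}$, and the sum reduces to $e_4/2 + 2e_3/3$, where $e_k$ denotes the number of $\Gamma_0^D(N)$-conjugacy classes of optimal embeddings of the maximal order of $\mathbb{Q}(\sqrt{-k})$ into $\mathcal{O}_N$ for $k \in \{3,4\}$.

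Two classical inputs then complete the computation. First, the Eichler mass formula gives $\mathrm{Vol}(\Gamma_0^D(N) \backslash \mathcal{H}) = \tfrac{\pi}{3}\,\varphi(D)\,\psi(N)$, contributing the bulk term $\varphi(D)\psi(N)/12$ after dividing by $4\pi$. Second, Eichler's theorem on optimal embeddings, together with the fact that both $\mathbb{Q}(i)$ and $\mathbb{Q}(\sqrt{-3})$ have class number one, factors each $e_k$ as a product of local embedding numbers over the primes of $DN$: at $p \mid D$ the local contribution is $1 - \kron{-k}{p}$, since the local algebra is the quaternion division algebra over $\mathbb{Q}_p$; at $q \parallel N$ it is $1 + \kron{-k}{q}$; and at $q$ with $q^2 \mid N$ it is $\delta_q(k)$.

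The main obstacle is the embedding count at primes $q$ with $q^2 \mid N$, which is not covered by the most classical split/inert/ramified dichotomies and requires explicit work with a non-maximal local Eichler order $\mathcal{O}_N \otimes \mathbb{Z}_q$ (typically realized as the upper-triangular matrices modulo $q^{v_q(N)}$ in $M_2(\mathbb{Z}_q)$), along with a case analysis on how the minimal polynomial of a generator of $\mathbb{Z}[\sqrt{-k}]$ factors modulo powers of $q$. Once these local counts are assembled into $e_3(D,N)$ and $e_4(D,N)$, substitution into the Gauss--Bonnet identity and rearrangement yields exactly the stated formula.
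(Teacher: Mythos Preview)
The paper does not prove this proposition at all; it simply recalls it as background and cites Voight's textbook \cite[Thm.~39.4.20]{Voight21}. Your sketch is essentially the standard argument that one finds there: Gauss--Bonnet for the cocompact Fuchsian group $\mathcal{O}_N^1/\{\pm 1\}$, the Eichler mass formula for the covolume, and the identification of elliptic points of order $2$ and $3$ with optimal embeddings of $\mathbb{Z}[i]$ and $\mathbb{Z}[\zeta_3]$ counted via Eichler's local-global formula. The arithmetic checks out: with $\mathrm{Vol} = \tfrac{\pi}{3}\varphi(D)\psi(N)$ and $\sum_i(1-1/m_i) = e_4/2 + 2e_3/3$, dividing $2g-2 = \mathrm{Vol}/(2\pi) - \sum_i(1-1/m_i)$ by $2$ gives exactly the stated formula.

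One small point worth tightening: elliptic points correspond to $\mathcal{O}_N^1$-conjugacy classes of finite cyclic subgroups, whereas Eichler's theorem counts optimal embeddings up to $\mathcal{O}_N^\times$-conjugacy, and an embedding and its complex conjugate determine the same elliptic point. These discrepancies cancel (indefiniteness over $\mathbb{Q}$ forces strong approximation and class number one for $\mathcal{O}_N$, and the two conjugate embeddings are exchanged by a unit of norm $-1$), but it is worth saying so explicitly. Your handling of the $q^2 \mid N$ case is also correct: since both relevant orders have conductor $1$, Ogg's case (iii)(a) applies directly and gives exactly $\delta_q(k)$.
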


We next recall a result of Eichler which relates local embedding numbers of quadratic orders into quaternion orders to global embedding numbers. These will be relevant both in results related to local points on Shimura curves and to fixed points of Atkin--Lehner involutions. We first set relevant definitions and notation. 

\begin{definition}\label{opt_emb_def}
    Let $\mathcal{O}$ be an Eichler order in $B_D$, let $K$ be a quadratic number field and let $R$ be an order in $K$. An \textbf{optimal embedding} of $R$ into $\mathcal{O}$ is an injection $\iota_K : K \hookrightarrow B_D$ such that $\iota_K^{-1}(\mathcal{O}) = R$.

    Two optimal embeddings $\iota_1$ and $\iota_2$ are \textbf{equivalent} if there is some $\gamma \in \mathcal{O}^\times$ such that $\iota_2(\alpha) = \gamma \iota_1(\alpha) \gamma^{-1}$ for all $\alpha \in K$. 
\end{definition}

For the remainder of the paper, let $\mathcal{O}_N$ denote a fixed Eichler order of level $N$ in $B_D$. For $p$ a rational prime, we let $(B_D)_p$ denote the localization of $B_D$ at $p$ and let $(\mathcal{O}_N)_p$ be the localization of $\mathcal{O}_N$ at $p$. We let $\nu(R,\mathcal{O}_N)$ denote the number of inequivalent optimal embeddings of $R$ into $\mathcal{O}_N$, and let $\nu_p(R,\mathcal{O}_N)$ denote the number of inequivalent optimal embeddings of $R_p$ into $(\mathcal{O}_N)_p$. We then have the following theorem of Eichler (see \cite[Theorem 1]{Ogg83}). 

\begin{theorem}[Eichler]\label{Eichler-Thm} 
Let $R$ be an order in a quadratic number field, and let $h(R)$ be the class number of $R$. Then
\[
\nu(R,\mathcal{O}_N) = h(R) \prod_{p\mid DN}\nu_p(R,\mathcal{O}_N).
\]
More precisely, suppose we are given for each $p \mid DN$ an equivalence class of optimal embeddings $R_p \hookrightarrow (\mathcal{O}_N)_p$. Then there are exactly $h(R)$ inequivalent optimal embeddings $R \hookrightarrow \mathcal{O}_N$ which are in the given local classes. 
\end{theorem}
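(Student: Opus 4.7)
The approach would be the classical adelic one via strong approximation. The plan is to construct the natural localization map from global equivalence classes of optimal embeddings to tuples of local classes, and then to analyze its fibers using the indefiniteness of $B_D$.

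First I would dispatch the primes $p \nmid DN$: at such a prime, $(\mathcal{O}_N)_p$ is a maximal order in the split algebra $(B_D)_p \cong M_2(\mathbb{Q}_p)$ and $R_p$ is the maximal order in the \'etale quadratic algebra $K_p$; a direct computation in $M_2(\mathbb{Q}_p)$ yields $\nu_p(R,\mathcal{O}_N) = 1$, which is why only the primes $p \mid DN$ appear in the product. Any global optimal embedding $\iota \colon R \hookrightarrow \mathcal{O}_N$ then adelizes to $\hat{\iota} \colon \hat{R} \hookrightarrow \hat{\mathcal{O}}_N$, and globally equivalent embeddings remain locally equivalent at every prime. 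This gives a well-defined map
\[
\Phi \colon \{\textnormal{global classes}\} \longrightarrow \prod_{p \mid DN} \{\textnormal{local classes at } p\},
\]
and the refined form of the theorem amounts to saying that $\Phi$ is surjective with every fiber of cardinality exactly $h(R)$.

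To analyze a fiber, I would fix a tuple of local embeddings $(\iota_p)_{p \mid DN}$ together with a realizing global embedding $\iota_0$. Any other $\iota$ that is locally equivalent to $\iota_0$ at every prime differs from $\iota_0$ by conjugation by some $\hat{x} \in \hat{\mathcal{O}}_N^\times$, and global equivalence amounts to further quotienting by $\mathcal{O}_N^\times$-conjugation. The Skolem--Noether theorem identifies the centralizer of $\iota_0(K)$ in $\hat{B}_D^\times$ with $\hat{\iota_0}(\hat{K}^\times)$, and strong approximation for the norm-one group of the indefinite algebra $B_D$ then collapses the adelic double coset describing the fiber onto
\[
K^\times \backslash \hat{K}^\times / \hat{R}^\times,
\]
where the denominator $\hat{R}^\times = \hat{K}^\times \cap \hat{\mathcal{O}}_N^\times$ appears precisely because both $\iota_0$ and its conjugate are optimal. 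This double coset is the standard adelic description of $\textnormal{Pic}(R)$, which has order $h(R)$, giving the desired fiber count.

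The main obstacle is the bookkeeping forced by the fact that $R$ need not be maximal in $K$: a general conjugation in $\hat{B}_D^\times$ can carry $\iota_0$ to an embedding whose image lies inside $\hat{\mathcal{O}}_N$ but is no longer optimal, hitting a larger order in $K$. Maintaining optimality throughout is precisely what restricts the adelic conjugators to $\hat{\mathcal{O}}_N^\times$ and, in turn, produces $\hat{R}^\times$ rather than $\hat{\mathcal{O}}_K^\times$ in the double coset, so that the class number $h(R)$ of the specified order appears, not merely that of the maximal order. Surjectivity of $\Phi$ then follows from the existence of any single global realization of a locally admissible tuple; if no such tuple exists, both sides of the formula vanish and there is nothing to prove.
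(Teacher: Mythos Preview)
The paper does not supply its own proof of this statement; it quotes the result with attribution to Eichler and a pointer to \cite[Theorem~1]{Ogg83}. Your adelic sketch via strong approximation is the standard argument one finds in the references (e.g.\ Vign\'eras, or Voight's book), so there is nothing in the paper itself to compare against.

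Two small points on your write-up. First, for $p\nmid DN$ you assert that $R_p$ is the maximal order of $K_p$; this fails whenever $p$ divides the conductor of $R$, which is in no way constrained to lie inside $DN$. The conclusion $\nu_p=1$ is nonetheless correct in that case---one checks it directly for an arbitrary quadratic $\mathbb{Z}_p$-order optimally embedded in $M_2(\mathbb{Z}_p)$---but your justification must cover non-maximal $R_p$ too, consistent with your own later remark that non-maximality of $R$ is the main bookkeeping burden. Second, your closing sentence on surjectivity is close to circular: saying ``surjectivity follows from the existence of a global realization'' merely restates what is to be shown. What actually produces a global realization is that an embedding $K\hookrightarrow B_D$ exists by the local--global principle for quaternion algebras once every local embedding exists, and then the same strong approximation you invoked for the fiber count lets you conjugate adelically into $\hat{\mathcal{O}}_N$ so as to hit any prescribed tuple of local classes. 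You have the right ingredients; the logic at the end just needs to be tightened.
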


The following result of Ogg allows us to compute these local embedding numbers, and hence to compute global embedding numbers. 

\begin{theorem}\cite[Theorem 2]{Ogg83}\label{thm_local_emb_Ogg}
Let $f$ be the conductor of $R$. Then $\nu_p := \nu_p(R,\mathcal{O}_N)$ is given below, according to various cases; divisibility is to be understood as in $\Z_p$, and $\psi_p$ is the multiplicative function with $\psi_p(p^k) = p^k(1+1/p)$ and $\psi_p(n) = 1$ if $p \nmid n$.   
\begin{enumerate}[(i)]
    \item If $p \mid D$, then $\nu_p = 1 - \left(\frac{R}{p}\right)$.
    \item If $p \parallel N$, then $\nu_p = 1 + \left(\frac{R}{p}\right)$.
    \item Suppose $p^2 \mid N$. 
    \begin{enumerate}[(a)]
        \item If $(pf)^2 \mid N$, then
        \[   
        \nu_p = 
     \begin{cases}
       2\psi_p(f) &\quad\text{if } \left(\frac{L}{p}\right) = 1, \\
       0 &\quad\text{otherwise.}  \\
     \end{cases}
\]
    \item If $pf^2 \parallel N$ (say $k$ is such that $p^k \parallel f$), then
     \[   
        \nu_p = 
     \begin{cases}
       2\psi_p(f) &\quad\text{if } \left(\frac{L}{p}\right) = 1, \\
       p^k &\quad\text{if } \left(\frac{L}{p}\right) = 0, \\
       0 &\quad\text{if } \left(\frac{L}{p}\right) = -1. \\
     \end{cases}
\]

\item If $f^2 \parallel N$ (say $k$ is such that $p^k \parallel f$), then

\[
\nu_p = p^{k-1}\left(p+1+\left(\frac{L}{p}\right)\right).
\]
\item If $pN \mid f^2$, then

 \[   
        \nu_p = 
     \begin{cases}
       p^k+p^{k-1} &\quad\text{if } N \parallel p^{2k}, \\
       2p^k &\quad\text{if } N \parallel p^{2k+1}.  \\
     \end{cases}
\]

    \end{enumerate}
\end{enumerate}
\end{theorem}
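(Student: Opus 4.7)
The plan is to reduce the problem to a purely local computation and then to a careful case analysis based on the position of $p$ in the quaternion algebra and on the relation between $v_p(f)$ and $v_p(N)$.

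First I would set up the local picture. Since $\nu_p(R,\mathcal{O}_N)$ depends only on the completions $(B_D)_p$ and $(\mathcal{O}_N)_p$, we can replace the global setting by a local one. If $p \mid D$ then $(B_D)_p$ is the unique division quaternion algebra over $\Q_p$ and $(\mathcal{O}_N)_p$ is its unique maximal order (the valuation ring of $(B_D)_p$). If $p \nmid D$ then $(B_D)_p \cong M_2(\Q_p)$ and, choosing coordinates, $(\mathcal{O}_N)_p$ is conjugate to the standard Eichler order
\[
\mathcal{E}_{p^e} \;=\; \left\{ \begin{pmatrix} a & b \\ c & d \end{pmatrix} \in M_2(\Z_p) : p^e \mid c \right\},
\]
where $p^e \parallel N$. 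An optimal embedding $R \hookrightarrow \mathcal{O}_N$ localizes to an optimal embedding $R_p \hookrightarrow (\mathcal{O}_N)_p$, and up to $(\mathcal{O}_N)_p^\times$-conjugation it is determined by the image $x$ of a $\Z_p$-algebra generator of $R_p$; the optimality condition translates to the statement that $\Z_p[x]$ equals the intersection of $(\mathcal{O}_N)_p$ with the $\Q_p$-subalgebra generated by $x$.

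Next I would handle case (i). Writing $R = \Z[\omega]$ with $\omega$ satisfying $\omega^2 - t\omega + n = 0$, the set of $x \in (\mathcal{O}_N)_p$ with reduced trace $t$ and reduced norm $n$ is an $(\mathcal{O}_N)_p^\times$-homogeneous space, and counting orbits amounts to computing how the residue field extension of the division algebra splits the quadratic extension $\Q_p(\omega)$. One obtains exactly $0$ orbits when $K_p \cong \Q_p \oplus \Q_p$ (since the division algebra has no zero divisors), exactly $1$ orbit when $K_p/\Q_p$ is the unramified quadratic extension (as this sits inside the division algebra canonically), and exactly $2$ orbits when $K_p/\Q_p$ is ramified (from two choices of uniformizer); repackaging via the Kronecker symbol gives $\nu_p = 1 - \left(\frac{R}{p}\right)$, where the ramified case corresponds to $\left(\frac{R}{p}\right) = 0$. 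Case (ii) is analogous but in $M_2(\Q_p)$: an Eichler order of level $p$ is the intersection of the two maximal orders stabilizing two neighbor vertices of the Bruhat--Tits tree, and an optimal embedding must stabilize both; counting the resulting orbits gives $\nu_p = 1 + \left(\frac{R}{p}\right)$.

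Case (iii), where $p^2 \mid N$, is the main obstacle and requires the most bookkeeping. The idea is to write $x = \begin{pmatrix} a & b \\ c & d \end{pmatrix} \in \mathcal{E}_{p^e}$ with characteristic polynomial $X^2 - tX + n$ (where $t,n$ come from a $\Z_p$-generator of $R_p$), and then to analyze the possible $p$-adic valuations $v_p(b)$ and $v_p(c)$ subject to the constraints $v_p(c) \geq e$, $a + d = t$, $ad - bc = n$, together with the optimality condition $\Z_p[x] = \Z_p\langle x \rangle \cap R_p$ which forces a specific valuation equality. The four sub-cases in the theorem (iii)(a)--(d) arise from comparing $2v_p(f)$ with $v_p(N)$: roughly, whether $(pf)^2 \mid N$ or not controls whether both $b$ and $c$ must have large valuation, and whether $pN \mid f^2$ controls whether the conductor is large enough to kill the Eichler condition entirely. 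In each sub-case, one counts the relevant $x$ modulo the action of $(\mathcal{E}_{p^e})^\times$, which after passing to the residue field acts by the Borel subgroup of $\GL_2(\F_p)$ on $M_2(\F_p)$, and a direct orbit count combined with Hensel lifting to $\Z_p$ gives the stated formulas. The hardest part is keeping track of optimality vs.\ mere embedding across these sub-cases; once the orbit representatives are chosen consistently (for instance, diagonalizing when $\left(\frac{L}{p}\right) = 1$ and using a ramified uniformizer when $\left(\frac{L}{p}\right) = 0$), the factors $\psi_p(f)$ and $p^k$ fall out of the size of the normalizer of $\Z_p[x]$ inside the local Eichler order.
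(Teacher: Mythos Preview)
The paper does not give a proof of this statement: it is quoted verbatim as \cite[Theorem 2]{Ogg83} and used as a black box, so there is no ``paper's own proof'' to compare your proposal against. Your outline is a reasonable sketch of the standard local computation (essentially the approach in Ogg's original paper and in references such as Vign\'eras or \cite[\S 30]{Voight21}), though as written it remains a plan rather than a proof---in particular, the orbit counts in case~(iii) and the optimality bookkeeping are asserted rather than carried out.
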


\subsection{The Atkin--Lehner group}
Consider the group of norm $1$ units in our Eichler order:
\[ \mathcal{O}_N^1 := \{ \gamma \in \mathcal{O}_N \mid \text{nrd}(\gamma) = 1\} . \]
Elements of the normalizer subgroup
\[ N_{{B_D}^\times_{> 0}}(\mathcal{O}_N^1) := \{\alpha \in {B_D}^\times \mid \text{nrd}(\alpha) > 0 \text{ and } \alpha^{-1} \mathcal{O}_N^1 \alpha = \mathcal{O}_N^1 \} \]
naturally act on $X_0^D(N)$ via action on the QM-structure $\iota$, with $\mathbb{Q}^\times \hookrightarrow {B_D}^\times$ acting trivially. The full \textbf{Atkin--Lehner group} is the group 
\[ W_0(D,N) := N_{{B_D}^\times_{> 0}}(\mathcal{O}_N^1) / \mathbb{Q}^\times \mathcal{O}_N^1 \subseteq \text{Aut}(X_0^D(N)). \]
The group $W_0(D,N)$ is a finite abelian $2$-group, with an \textbf{Atkin--Lehner involution} $w_m$ associated to each Hall divisor of $DN$. That is,
\[ W_0(D,N) = \{w_m \mid m \parallel DN\} \cong \left(\mathbb{Z}/2\mathbb{Z}\right)^{\omega(DN)}, \]
where $\omega(DN)$ denotes the number of distinct prime divisors of $DN$. 

The following result says that fixed points of Atkin--Lehner involutions correspond to optimal embeddings of specific imaginary quadratic orders, i.e., to CM points by specific orders.

\begin{theorem}\cite[p. 283]{Ogg83}\label{thm_Ogg_fixed_pts}
    The fixed points by the Atkin--Lehner involution of level $m \parallel DN$ acting on $X_0^D(N)$ are \textnormal{CM} points by
\[   
R = 
     \begin{cases}
       \Z[i],\Z[\sqrt{-2}] &\quad\text{if } m = 2; \\
       \Z\left[\frac{1+\sqrt{-m}}{2}\right], \Z[\sqrt{-m}] &\quad\text{if } m \equiv 3 \;(\bmod \; 4); \\
       \Z[\sqrt{-m}] &\quad\text{ otherwise}.\\
     \end{cases}
\]
The number of fixed points corresponding to any of the previous orders $R$ is given by
\[
h(R) \prod_{p|\frac{DN}{m}} \nu_p(R,\mathcal{O}_N).
\]
\end{theorem}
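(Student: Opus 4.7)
The plan is to identify fixed points of $w_m$ with equivalence classes of optimal embeddings of certain imaginary quadratic orders into $\mathcal{O}_N$, subject to a rigid local condition at primes dividing $m$, and then to invoke Eichler's theorem (\cref{Eichler-Thm}) to assemble the global count.

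First I would unwind the moduli interpretation. Choose a representative $\omega_m \in \mathcal{O}_N$ of the Atkin--Lehner class with $\mathrm{nrd}(\omega_m) = m$ and $\omega_m \mathcal{O}_N^1 \omega_m^{-1} = \mathcal{O}_N^1$. A point $x = [(A,\iota,\lambda)]$ is fixed by $w_m$ exactly when there exists an isomorphism $\phi \colon A \to A$ satisfying $\phi \circ \iota(\alpha) = \iota(\omega_m \alpha \omega_m^{-1}) \circ \phi$ for all $\alpha \in \mathcal{O}_N$. Then $\phi \circ \iota(\omega_m)^{-1}$ lies in the commutant of $\iota(\mathcal{O}_N)$ in $\End^0(A)$, which is strictly larger than $\Q$ only when $(A,\iota)$ has CM. Thus $x$ must be a CM point, and a primitive such $\phi$ generates an imaginary quadratic order $R$ inside $\End(A,\iota) \subseteq K \colonequals \End^0(A,\iota)$.

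Next I would pin down the admissible orders. Using $w_m^2 = 1$, after rescaling I can arrange $\phi$ to be trace-zero with $\phi^2 = -m$, so that $\Z[\phi] \cong \Z[\sqrt{-m}]$ sits inside $R$. For $m \equiv 3 \pmod 4$ the maximal order $\Z[(1+\sqrt{-m})/2]$ contains $\Z[\sqrt{-m}]$ with index $2$, and both orders can genuinely appear; for other $m$ with $m \neq 2$ the order $\Z[\sqrt{-m}]$ is already the maximal order of $\Q(\sqrt{-m})$ and is the unique possibility. The case $m = 2$ is exceptional because $2$ ramifies in both $\Q(i)$ and $\Q(\sqrt{-2})$: each of the maximal orders $\Z[i]$ and $\Z[\sqrt{-2}]$ contains an element of reduced norm $2$ that can serve as a representative of $w_2$, so both occur as CM orders of fixed points.

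Finally I would count, for each such $R$, the fixed points with $R$-CM. These correspond to those optimal embeddings $R \hookrightarrow \mathcal{O}_N$ (modulo $\mathcal{O}_N^\times$-conjugacy) whose image is locally compatible with $\omega_m$ at every $p \mid m$. At such primes the Atkin--Lehner condition rigidifies the local embedding class, contributing a factor of $1$ in place of $\nu_p(R,\mathcal{O}_N)$, whereas at primes $p \mid DN$ with $p \nmid m$ there is no such constraint and the full local count from \cref{thm_local_emb_Ogg} applies. Eichler's theorem then packages these local contributions into the global count
\[
h(R) \prod_{p \mid \frac{DN}{m}} \nu_p(R, \mathcal{O}_N).
\]
The main obstacle I expect is establishing this local rigidity at primes dividing $m$ and justifying the $\Z[i]$ contribution in the $m = 2$ case; this requires showing that the local normalizing element compatible with $\omega_m$ at $p \mid m$ is unique up to $(\mathcal{O}_N)_p^\times$-conjugacy, and identifying which trace-zero elements of the given quadratic order actually serve as Atkin--Lehner representatives in the quaternion order as opposed to being merely abstract embeddings.
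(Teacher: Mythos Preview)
The paper does not supply a proof of this statement; it is quoted directly from Ogg \cite[p.~283]{Ogg83} and used as a black box. So there is no ``paper's own proof'' to compare against.

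Your outline is essentially the standard argument (and is what Ogg carries out in the complex-uniformisation language rather than via the moduli interpretation). One point deserves tightening: the sentence ``after rescaling I can arrange $\phi$ to be trace-zero with $\phi^2 = -m$'' is not correct in general, and in fact the $\Z[i]$ case for $m=2$ is precisely where it fails. The relevant element there is $1+i$, which has reduced norm $2$ but trace $2$; no trace-zero element of $\Z[i]$ has norm $2$. What you actually obtain from a fixed point is an element $\mu \in \mathcal{O}_N$ of reduced norm $m$ normalising $\mathcal{O}_N^1$, and the CM order is the \emph{optimal} order $R = \Q(\mu) \cap \mathcal{O}_N \supseteq \Z[\mu]$, not necessarily $\Z[\sqrt{-m}]$ itself. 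Running through the possible minimal polynomials $x^2 - tx + m$ with $t^2 < 4m$ and checking which $\Z[\mu]$ (and their over-orders) can occur is what produces the case split in the statement. You flag this issue in your final paragraph, so you are aware of it; I would just recommend not asserting the trace-zero normalisation earlier, since it obscures exactly the step that needs the most care.
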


Consequently, the Fricke involution $w_{DN}$ always has fixed points:
\begin{corollary}
Assume $D > 1$. Then the number of fixed points of $X_0^D(N)$ by the Fricke involution is 
\[   
\#X_0^D(N)^{w_{DN}} = 
     \begin{cases}
    h\left(\Z\left[\frac{1+\sqrt{-DN}}{2}\right]\right) + h(\Z[\sqrt{-DN}]) &\quad\text{if } DN \equiv 3 \;(\bmod \; 4), \\
       h(\Z[\sqrt{-DN}]) &\quad\text{ otherwise}.\\
     \end{cases}
\] 
\end{corollary}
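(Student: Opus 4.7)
The plan is to apply \cref{thm_Ogg_fixed_pts} directly with the Hall divisor $m = DN$ of $DN$. Since $D > 1$ is the discriminant of an indefinite quaternion algebra, hence a product of an even number of distinct primes, we have $D \geq 6$, so $DN \geq 6$. In particular the exceptional $m = 2$ branch of \cref{thm_Ogg_fixed_pts} does not apply, and the fixed points of $w_{DN}$ are the CM points by $R = \Z[\tfrac{1+\sqrt{-DN}}{2}]$ together with $R = \Z[\sqrt{-DN}]$ when $DN \equiv 3 \pmod{4}$, and by $R = \Z[\sqrt{-DN}]$ alone otherwise.

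Next I would observe that specialising to $m = DN$ collapses the product $\prod_{p \mid DN/m} \nu_p(R,\mathcal{O}_N)$ appearing in the count formula of \cref{thm_Ogg_fixed_pts}: since $DN/m = 1$ has no prime divisors, this product is empty and equals $1$. Therefore each admissible order $R$ contributes exactly $h(R)$ fixed points, and summing the contribution(s) over the orders listed above yields the claimed formula. Positivity of class numbers of imaginary quadratic orders then encodes the assertion that $w_{DN}$ always has fixed points when $D > 1$.

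I do not anticipate any substantive obstacle: the corollary is an immediate specialisation of \cref{thm_Ogg_fixed_pts}. The only non-automatic inputs are the arithmetic check that $D > 1$ forces $DN > 2$, so that the exceptional case $m = 2$ can safely be ignored, and the observation that the empty product arising from $DN/m = 1$ evaluates to $1$.
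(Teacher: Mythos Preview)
Your proposal is correct and matches the paper's approach: the corollary is stated immediately after \cref{thm_Ogg_fixed_pts} with the word ``Consequently'' and no further proof, so the paper treats it exactly as you do, as the specialisation $m = DN$ collapsing the product over $p \mid DN/m$ to an empty product. Your observation that $D > 1$ forces $DN \geq 6$, ruling out the $m = 2$ branch, is the only non-automatic check and you handle it correctly.
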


\section{Local points} 

To prove that $X_0^D(N)/\langle w_m \rangle$ has no $\mathbb{Q}$-rational points, it is sufficient to prove the non-existence of points over $\mathbb{R}$ or over $\mathbb{Q}_p$ for some prime $p$. We will use such arguments later in determining which genus one Atkin--Lehner quotients are in fact elliptic curves over $\mathbb{Q}$, so in this section we recall results on local points on these quotients. 

\subsection{Real points}

We mentioned in the introduction that when $D>1$, we have
\[
X_0^D(N)(\mathbb{R}) = \emptyset.
\]
In \cite{Ogg83}, Ogg completes a study of real points on quotients $X_0^D(N)/\langle w_m \rangle$ for $m \parallel DN$. As $\left(X_0^D(N)/\langle w_m \rangle\right)(\mathbb{R})$ is a real manifold of dimension one, it is a disjoint union of circles. The number of connected components is related to the number of classes of certain optimal embeddings of orders in the real quadratic field $\mathbb{Q}(\sqrt{m})$ into $\mathcal{O}_N$; we summarize Ogg's results here:
\begin{theorem}\cite[Proposition 1, Theorem 3]{Ogg83}\label{thm_ogg_real_pts}
   Let $D > 1$ and $m \parallel DN$, and set 
   \[
    \nu(m) = \sum_{R} h(R) \prod_{p|\frac{DN}{m}} \nu_p(R,\mathcal{O}_N),
    \]
    where $R$ ranges over $\Z[\sqrt{m}]$, and also $\Z\left[\frac{1+\sqrt{m}}{2}\right]$ if $m \equiv 1 \pmod{4}$. 
    
    Let $\#(m)$ denote the number of connected components of $\left(X_0^D(N)/\langle w_m \rangle \right)(\mathbb{R})$. If $m$ is a square, then $\#(m) = 0$, i.e., this quotient has no real points. If $m$ is not a square, then 
    \[ \#(m) = \nu(m)/2, \]
    unless $\nu(m) > 0, i \in \mathcal{O}_N, DN = 2t$, with $t$ odd, $m=t$ or $2t$, and $x^2-my^2= \pm 2$ is solvable with $x,y \in \Z$, in which case 
    \[ \#(m) = \left(\nu(m)+2^{\omega(DN)-2}\right)/ 2. \]
\end{theorem}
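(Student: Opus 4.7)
The plan is to compute $(X_0^D(N)/\langle w_m\rangle)(\mathbb{R})$ via an antiholomorphic involution on $X_0^D(N)$. Since $D>1$ forces $X_0^D(N)(\mathbb{R})=\emptyset$, every real point $y$ on the quotient lifts to a pair $\{x,\bar x\}$ of complex-conjugate points satisfying $w_m(x) = \bar x$; equivalently, the real locus of the quotient is the image of the fixed locus $\mathrm{Fix}(\tau_m)$ of the antiholomorphic involution $\tau_m := c \circ w_m$, where $c$ denotes complex conjugation. Each connected component of $\mathrm{Fix}(\tau_m)$ is a smooth real $1$-manifold, hence a circle, and the real locus of the quotient inherits this structure.

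Working in the complex uniformization $X_0^D(N)(\mathbb{C}) = \Gamma_0^D(N)\backslash \mathcal{H}$ afforded by a splitting $B_D \otimes \mathbb{R} \cong M_2(\mathbb{R})$, any lift of $\tau_m$ to $\mathcal{H}$ has the form $z \mapsto \mu \cdot \bar z$ with $\mu$ in the normalizer of $\Gamma_0^D(N)$ and $\mu^2 \in \mathbb{Q}_{>0}$ equal to $m$ up to rescaling; thus $\mathbb{Q}(\mu) \cong \mathbb{Q}(\sqrt{m})$ is a real quadratic subfield of $B_D$. Components of $\mathrm{Fix}(\tau_m)$ are images of geodesics in $\mathcal{H}$ stabilized pointwise by such a $\mu$, and, after appropriate equivalence, these are in bijection with optimal embeddings into $\mathcal{O}_N$ of the orders $\mathbb{Z}[\sqrt{m}]$ (plus $\mathbb{Z}[(1+\sqrt{m})/2]$ when $m \equiv 1 \pmod 4$) in $\mathbb{Q}(\sqrt{m})$. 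Eichler's theorem (\cref{Eichler-Thm}) then counts these: the Hall condition $m \parallel DN$ pins down the embedding type at each $p \mid m$, so only primes $p \mid DN/m$ contribute nontrivial local factors, giving the total $\nu(m)$. If $m$ is a perfect square, $\mathbb{Q}(\sqrt{m}) = \mathbb{Q}$ admits no such quadratic subfield, so $\mathrm{Fix}(\tau_m) = \emptyset$ and $\#(m) = 0$ immediately.

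Descending from $X_0^D(N)$ to the quotient by $\langle w_m\rangle$ generically halves the count of components, since $w_m$ either pairs distinct fixed circles or acts freely on a single fixed circle (the latter being forced by the absence of real points upstairs), producing one downstairs circle per upstairs $w_m$-orbit; this yields the generic formula $\#(m) = \nu(m)/2$. The main obstacle is the exceptional case. Here I expect the halving to fail due to an auxiliary normalizer element that collapses or separates components in a non-generic way; such an element is provided by an element of reduced norm $\pm 2$ in $\mathbb{Q}(\sqrt{m}) \cap \mathcal{O}_N$, whose existence is exactly the $\mathbb{Z}$-solvability of $x^2 - my^2 = \pm 2$. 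Requiring this element to interact nontrivially with the Atkin--Lehner $2$-group forces the rigid shape $i \in \mathcal{O}_N$, $DN = 2t$ with $t$ odd, and $m \in \{t, 2t\}$, and the correction term $2^{\omega(DN)-2}$ arises as the index of the relevant stabilizer subgroup inside $W_0(D,N)$. Carefully verifying that all of these conditions align to produce the stated formula $\#(m) = (\nu(m) + 2^{\omega(DN)-2})/2$, and that no further exceptions arise, is the delicate arithmetic heart of the proof.
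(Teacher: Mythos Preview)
The paper does not prove this theorem; it is quoted from Ogg \cite[Proposition~1, Theorem~3]{Ogg83} as input for later local-points arguments, with no argument supplied in the paper itself. So there is no in-paper proof to compare against beyond Ogg's original.

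Your outline does follow Ogg's strategy: identify the real locus of $X_0^D(N)/\langle w_m\rangle$ with the image of $\mathrm{Fix}(\tau_m)$ for the antiholomorphic involution $\tau_m=c\circ w_m$, recognise that the fixed circles arise from closed geodesics attached to elements $\mu\in\mathcal{O}_N$ generating a copy of $\mathbb{Q}(\sqrt{m})$, and count these via optimal embeddings and Eichler's formula. That much is correct in spirit.

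There are, however, two real gaps. First, your halving step is internally inconsistent: you allow that $w_m$ may either swap two fixed circles or act freely on a single one, and then assert that ``one downstairs circle per upstairs $w_m$-orbit'' yields $\nu(m)/2$. But a $w_m$-invariant circle is a single orbit producing a single downstairs circle, so orbit-counting alone does not halve the total. You have not shown that the invariant case never occurs, nor identified any other source for the factor of $2$; in Ogg's treatment this factor is tied to the relationship between an embedding and its Galois twist $\iota\mapsto\iota\circ(\sqrt{m}\mapsto-\sqrt{m})$ and to the stabiliser of a geodesic inside the normaliser, and sorting this out is already nontrivial. Second, and more decisively, you explicitly concede that the exceptional case is unverified (``I expect the halving to fail\ldots''; ``Carefully verifying\ldots is the delicate arithmetic heart of the proof''). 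That verification---pinning down exactly when a norm-$\pm 2$ element of $\mathbb{Z}[\sqrt{m}]$ in $\mathcal{O}_N$ disrupts the generic count, and why the discrepancy is precisely $2^{\omega(DN)-2}$---is the substance of Ogg's Theorem~3, and your proposal stops short of it.
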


\subsection{$p$-adic points}
We next recall a result on $\mathbb{Q}_p$-rational points on $X_0^D(N)$ and certain Atkin--Lehner quotients thereof, coming from work of Ogg \cite{Ogg85}.

\begin{theorem}\cite[Th{\'e}or{\`e}me, p. 206]{Ogg85}
Let $p$ be a prime dividing $D$, and let $\mathcal{O}_N$ be an Eichler order of level $N$ in $B_D$. Suppose that $m \parallel \frac{DN}{p}$ and $m > 1$.

Now let $\widehat{B}$ denote the definite (ramified at infinity) quaternion algebra over $\mathbb{Q}$ of discriminant $D/p$, let $\widehat{\theta}$ be a choice of Eichler order of level $N$ in $\widehat{B}$ and let $h = h\left(\widehat{\theta}\right)$ be the class number of $\widehat{\theta}$. Let $\widehat{\theta}_1, \ldots, \widehat{\theta}_h$ denote the inequivalent Eichler orders of level $N$ in $\widehat{B}$. 
\begin{enumerate}[(i)]
    \item $X_0^D(N)(\Q_p)$ is non-empty if and only if $p=2$ and $\sqrt{-1} \in \mathcal{O}_N$, or $p \equiv 1 \pmod{4}$, $N = 1$ and $D = 2p$.
    \item If $X_0^D(N)(\Q_p)$ is empty, then $(X_0^D(N)/\langle w_m \rangle)(\Q_p)$ is non-empty if and only if one of the following holds:
    \begin{enumerate}[a)]
        \item $p=2$, $m = \frac{DN}{p}$ and $\sqrt{-2} \in \mathcal{O}_N$,
        \item $p > 2$, $\sqrt{-p} \in \mathcal{O}_N$, $\left(\frac{-m}{p} \right) = 1$, $\frac{DN}{p} \in \{ m, 2m\}$ and $\big( \, 8|(p+1)(m+1)$ if $\frac{DN}{p} = 2m$ and $2|N \big)$, or
        \item $p \equiv 1 \pmod{4}$, $\sqrt{-1} \in \widehat{\theta}_i$ for some $1 \leq i \leq h$, $\frac{DN}{p} \in \{ m, 2m\}$ and $\sqrt{-pm} \in \mathcal{O}_N$. 
    \end{enumerate}
    \item $(X_0^D(N)/\langle w_p\rangle) (\Q_p)$ is non-empty if and only if there is an index $1 \leq i \leq h$ such that $\widehat{\theta}_i$ contains $\sqrt{-p}$ or a root of unity not equal to $\pm 1$.
    \item If $X_0^D(N)(\Q_p)$ is empty, then $(X_0^D(N)/\langle w_{pm} \rangle) (\Q_p)$ is non-empty if and only if there is some $1 \leq i \leq h$ with $\sqrt{-m} \in \widehat{\theta}_i$, or $\sqrt{-1} \in \widehat{\theta_i}$ in the case of $m = 2$. 
\end{enumerate} 
\end{theorem}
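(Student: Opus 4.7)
The plan is to use the Cerednik--Drinfeld $p$-adic uniformization of $X_0^D(N)$, which for $p\mid D$ expresses the rigid analytification in terms of the definite quaternion algebra $\widehat{B}$ of discriminant $D/p$. Fix a splitting $\widehat{B}\otimes \mathbb{Q}_p \cong M_2(\mathbb{Q}_p)$, and let $\Gamma$ denote the image in $\PGL_2(\mathbb{Q}_p)$ of the norm-one units of $\widehat{\theta}[1/p]$. The base change to $\mathbb{Q}_{p^2}$ admits a description as a quotient of the product of Drinfeld's $p$-adic upper half plane $\mathcal{H}_p$ with a finite indexing set for components, and the $\mathbb{Q}_p$-structure is obtained by a Galois twist in which $\Frob_p$ interchanges the two sheets of each component.

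Next, I would exploit the reduction map. The special fiber is a semistable curve whose dual graph is $\Gamma\backslash T_p$ for $T_p$ the Bruhat--Tits tree of $\PGL_2(\mathbb{Q}_p)$, with vertices indexed by the $h$ classes of Eichler orders of level $N$ in $\widehat{B}$. By Hensel's lemma, $\mathbb{Q}_p$-points of $X_0^D(N)$ come either from $\mathbb{F}_p$-smooth points of irreducible components fixed by $\Frob_p$, or from nodes whose two branches are individually preserved (rather than swapped). Translating through the twisted uniformization, these become combinatorial conditions on vertices and edges fixed under a distance-one translation of $T_p$ composed with the $\Gamma$-action, which in turn reduce to the existence of elements of $\widehat{\theta}_i^\times$ satisfying specific quadratic relations---namely roots of $-1$ or of $-p$ in the appropriate order. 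Part (i) then emerges: the two listed cases correspond respectively to $\sqrt{-1}\in\mathcal{O}_N$ (producing a fixed edge when $p=2$) and to the $N=1$, $D=2p$ scenario, in which there is a single component admitting a $\Frob_p$-fixed smooth point.

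For the Atkin--Lehner quotients, an element $\omega_m$ normalizing $\mathcal{O}_N$ of reduced norm $m$ descends to an involution on the uniformization. A $\mathbb{Q}_p$-point of $X_0^D(N)/\langle w_m\rangle$ corresponds to a point $x\in X_0^D(N)(\overline{\mathbb{Q}}_p)$ with $\Frob_p(x)\in\{x, w_m(x)\}$; the genuinely new possibility is $\Frob_p(x)=w_m(x)$, which translates to a fixed structure under $w_m\circ \Frob_p$. Depending on whether $\omega_m$ preserves or swaps the two sheets of the uniformization (equivalently, on $v_p(m)$), the fixed structure corresponds to optimal embeddings of $\mathbb{Z}[\sqrt{-p}]$, $\mathbb{Z}[\sqrt{-m}]$, or $\mathbb{Z}[\sqrt{-pm}]$ into either $\mathcal{O}_N$ or some $\widehat{\theta}_i$, via Eichler's correspondence (\cref{Eichler-Thm}). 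Conditions (ii)--(iv) are then extracted by case analysis on $v_p(m)\in\{0,1\}$ and $v_p(DN/m)\in\{1,2\}$, together with the parity of $p$.

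The main obstacle is the delicate case $p=2$: here the splitting $\widehat{B}\otimes \mathbb{Q}_2$ requires special care, several local embedding counts shift parity, and when $N$ is even one must track whether certain units of $\widehat{\theta}$ act trivially on level structure. This is the source of the extra $8\mid (p+1)(m+1)$ hypothesis in (ii)(b) and of the explicit appearance of $\sqrt{-2}$ in (ii)(a). Aside from these low-prime technicalities, the work reduces to systematic bookkeeping on the quotient graph $\Gamma\backslash T_p$, combined with repeated application of Eichler's theorem to convert each embedding question into the numerical condition stated.
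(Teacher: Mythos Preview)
The paper does not prove this theorem at all: it is quoted verbatim as a result of Ogg, with the citation \cite[Th{\'e}or{\`e}me, p.~206]{Ogg85}, and is used as a black box in \S3 and \S5 to rule out $\mathbb{Q}_p$-points on certain Atkin--Lehner quotients. There is therefore no ``paper's own proof'' to compare your proposal against.

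That said, your sketch is broadly faithful to Ogg's original argument in \cite{Ogg85}, which does proceed via the Cerednik--Drinfeld uniformization, analysis of the dual graph $\Gamma\backslash T_p$ of the special fiber, and the translation of $\Frob_p$-fixed structures into optimal embedding conditions on the Eichler orders $\widehat{\theta}_i$. One caution: several of your attributions are imprecise. For instance, in part~(i) the condition ``$p\equiv 1\pmod 4$, $N=1$, $D=2p$'' does not arise from a ``single component admitting a $\Frob_p$-fixed smooth point'' in the way you describe; it comes rather from the unique Eichler order $\widehat{\theta}$ in the definite algebra of discriminant $2$ containing $\sqrt{-1}$, and your account of the $p=2$ subtleties conflates the role of $\mathcal{O}_N$ (the indefinite order) with that of the $\widehat{\theta}_i$ (the definite orders). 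If you intend to actually supply a proof rather than cite Ogg, you would need to carry out the case analysis with considerably more care than this outline indicates.
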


\section{Gonalities and involutions of algebraic curves}


For a field $F$ and a curve $C/F$, the $F$-\textbf{gonality} $\gon_F(C)$ is defined to be the least degree of a non-constant morphism $f : C \rightarrow \P^1$ defined over $F$. For $\overline{F}$ an algebraic closure of $F$, the $\overline{F}$-gonality is also called the \textbf{geometric gonality}. 
We will use the following result to obtain an upper bound on the genus of $X_0^D(N)$:

\begin{theorem}{\cite[Theorem 1.1]{Abramovich96}} \label{theorem: Abr}
For a Shimura curve $X_0^D(N)$, we have
\[
 g(X_0^D(N)) \le \frac{200}{21} \gon_{\C} (X_0^D(N)) + 1.
\]   
\end{theorem}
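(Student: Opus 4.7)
The plan is to combine a Li--Yau--type eigenvalue bound with Selberg-style spectral estimates for congruence quotients, which is the strategy due to Abramovich. Write $X = X_0^D(N)(\mathbb{C})$, equipped with the hyperbolic metric coming from its uniformization as a quotient of the upper half-plane by a congruence arithmetic Fuchsian group. Let $\mu$ denote the associated area and $\lambda_1 = \lambda_1(X)$ the first positive eigenvalue of the hyperbolic Laplacian.

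The first ingredient is a Li--Yau--type inequality: if $\varphi : X \to \mathbb{P}^1$ is a holomorphic (hence conformal, possibly branched) map of degree $d$, then
\[
\lambda_1(X) \cdot \mu(X) \;\le\; 8\pi d.
\]
This is obtained by a short variational argument: after post-composing with an automorphism of $\mathbb{P}^1$ so that the three coordinate functions of $S^2 \cong \mathbb{P}^1$ pull back to functions on $X$ with vanishing integral, these pullbacks are admissible test functions in the Rayleigh quotient characterizing $\lambda_1$, and the area-preserving property of conformal maps lets one bound the Dirichlet energy in terms of $d$. The second ingredient is the Gauss--Bonnet theorem applied to the hyperbolic orbifold structure: since the only elliptic points on $X_0^D(N)$ contribute positively to the area beyond the Euler characteristic term, one has
\[
\mu(X) \;\ge\; 4\pi\bigl(g(X_0^D(N)) - 1\bigr).
\]

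The third and most substantive ingredient is a lower bound $\lambda_1(X) \ge 21/100$ valid for congruence quotients, due to Luo--Rudnick--Sarnak, and which carries over to $X_0^D(N)$ via the Jacquet--Langlands correspondence relating automorphic forms on the quaternion algebra $B_D$ to those on $\operatorname{GL}_2$. Chaining the three inequalities with $d = \operatorname{gon}_{\mathbb{C}}(X_0^D(N))$ yields
\[
\frac{21}{100}\cdot 4\pi\bigl(g(X_0^D(N))-1\bigr) \;\le\; \lambda_1(X)\cdot \mu(X) \;\le\; 8\pi \cdot \operatorname{gon}_{\mathbb{C}}(X_0^D(N)),
\]
and rearranging gives exactly $g(X_0^D(N)) \le \tfrac{200}{21}\operatorname{gon}_{\mathbb{C}}(X_0^D(N)) + 1$. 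The genuine obstacle is the spectral bound $\lambda_1 \ge 21/100$, which lies deep in the theory of automorphic forms (a bound toward Ramanujan--Petersson on $\operatorname{GL}_2/\mathbb{Q}$); the Li--Yau step and Gauss--Bonnet are essentially formal by comparison, and any improvement in the spectral bound (for instance, Kim--Sarnak's later $975/4096$) would directly improve the constant in the theorem.
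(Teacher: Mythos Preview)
The paper does not supply its own proof of this statement; it is quoted verbatim as \cite[Theorem 1.1]{Abramovich96} and used as a black box. Your proposal is therefore not competing against an argument in the paper but rather reconstructing Abramovich's original proof, and it does so faithfully: the Yang--Yau inequality $\lambda_1 \cdot \mu \le 8\pi d$, the Gauss--Bonnet lower bound $\mu \ge 4\pi(g-1)$ (valid because the orbifold correction terms $\sum (1-1/e_i)$ are nonnegative), and the Luo--Rudnick--Sarnak bound $\lambda_1 \ge 21/100$ transported via Jacquet--Langlands are exactly the three inputs Abramovich combines.

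Two small points of polish. First, the inequality you invoke is conventionally attributed to Yang--Yau rather than Li--Yau (the latter name usually refers to gradient and heat-kernel estimates). Second, the phrase ``area-preserving property of conformal maps'' is not quite the mechanism: conformal maps are not area-preserving in general. What makes the variational step work is that the Dirichlet energy of the pulled-back coordinate functions equals the energy of the map $\varphi$, and for a holomorphic map to $\mathbb{P}^1$ this energy is twice the mapping area, i.e.\ $2 \cdot 4\pi d = 8\pi d$. With that correction the sketch is sound.
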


One can compose a map to the projective line with any covering map, giving the following simple but useful result. 

\begin{proposition}
Let $f : X \rightarrow Y$ be a non-constant morphism of curves over $F$. Then
\[
\gon_F(X) \le \deg(f) \gon_F(Y).
\]
\end{proposition}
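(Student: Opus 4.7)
The plan is to prove the bound by explicitly producing a morphism from $X$ to $\mathbb{P}^1$ of the desired degree, namely by composing $f$ with a gonality-realizing map on $Y$. Since gonality is defined as a minimum over all non-constant morphisms to $\mathbb{P}^1$, exhibiting any such morphism of a given degree yields an upper bound on the gonality.

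First I would choose a non-constant morphism $g : Y \to \mathbb{P}^1$ defined over $F$ with $\deg(g) = \gon_F(Y)$; such a $g$ exists by the definition of $F$-gonality. Then I would consider the composition $g \circ f : X \to \mathbb{P}^1$, which is defined over $F$ since both $f$ and $g$ are. The composition is non-constant because $f$ and $g$ are non-constant morphisms of curves (hence surjective on closed points), so their composition is again surjective and therefore non-constant. Thus $g \circ f$ is a valid candidate realizing a bound on $\gon_F(X)$.

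Next I would invoke the multiplicativity of degree for morphisms of curves: $\deg(g \circ f) = \deg(g)\cdot \deg(f) = \gon_F(Y)\cdot \deg(f)$. Combining this with the fact that $\gon_F(X) \leq \deg(h)$ for any non-constant $F$-morphism $h : X \to \mathbb{P}^1$, we obtain the desired inequality
\[
\gon_F(X) \leq \deg(g \circ f) = \deg(f)\cdot \gon_F(Y).
\]

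There is no real obstacle here; the only mild subtlety is ensuring that the composition is actually non-constant and that multiplicativity of degree applies, both of which are immediate from standard properties of morphisms between smooth projective curves. Existence of the gonality-realizing map $g$ is also standard from the definition.
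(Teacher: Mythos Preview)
Your proposal is correct and follows exactly the approach indicated in the paper, which simply notes that one can compose a gonality-realizing map on $Y$ with the covering map $f$. Your write-up makes explicit the multiplicativity of degree and the non-constancy of the composition, which are precisely the details needed to fill in the paper's one-line justification.
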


For instance, a degree $d$ cover of a (hyper)elliptic curve must have gonality at most $2d$.

\begin{corollary}\label{gon_cor}
If $X$ is a bielliptic curve over a field $F$, then $\gon_F(X) \le 4$. 
\end{corollary}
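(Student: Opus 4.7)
The plan is to combine the definition of bielliptic with the previous proposition about gonality and composition of covers. By definition, if $X$ is bielliptic over $F$ then there exists an elliptic curve $E/F$ together with a degree $2$ morphism $f \colon X \to E$ defined over $F$. So to conclude $\gon_F(X) \le 4$ via the proposition, it suffices to show that $\gon_F(E) \le 2$.

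For this, I would use the Weierstrass model: any elliptic curve $E/F$ admits a Weierstrass equation over $F$, and the $x$-coordinate projection $(x,y) \mapsto x$ gives a morphism $E \to \mathbb{P}^1_F$ of degree $2$ defined over $F$. (Here the existence of an $F$-rational point -- the origin -- is built into the definition of an elliptic curve over $F$, which is what makes the Weierstrass model available over $F$ rather than only over an extension.) Hence $\gon_F(E) \le 2$.

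Putting these together, the proposition applied to $f$ yields
\[
\gon_F(X) \le \deg(f)\cdot \gon_F(E) \le 2 \cdot 2 = 4,
\]
which is the desired bound. There is no real obstacle here; the statement is essentially a one-line consequence of the preceding proposition once one observes that elliptic curves have gonality $2$ over their field of definition.
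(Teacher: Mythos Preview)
Your proof is correct and matches the paper's approach: the corollary is an immediate consequence of the preceding proposition applied to the degree $2$ map $X \to E$, together with the fact that an elliptic curve over $F$ has $F$-gonality at most $2$.
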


\begin{proposition}\cite[Proposition 1]{HaSi91}\label{HS_prop}
If $f: X \rightarrow Y$ is a non-constant morphism of curves over $F$ and $X$ is geometrically bielliptic, then $Y$ is geometrically hyperelliptic, geometrically bielliptic, or of genus at most one.    
\end{proposition}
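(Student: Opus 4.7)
The plan is to argue geometrically, replacing $F$ by its algebraic closure (both the hypothesis and conclusion are geometric). Let $\iota \colon X \to X$ denote a bielliptic involution and write $\pi \colon X \to E := X/\langle \iota \rangle$ for the quotient map onto the resulting elliptic curve. The first step is a dichotomy based on whether $f$ is $\iota$-invariant. If $f \circ \iota = f$, then $f$ factors through $\pi$, yielding a non-constant morphism $E \to Y$, and hence $g(Y) \leq g(E) = 1$. Otherwise, I would form the product map $(f, \pi) \colon X \to Y \times E$ and let $W$ be its image, with projections to $Y$ and to $E$. Writing $c$ for the degree of $X \to W$, the projection $W \to E$ has degree $2/c$, so $c \in \{1, 2\}$; if $c = 2$, then $W$ is birational to $E$ and so has genus one, and the non-constant first projection $W \to Y$ then forces $g(Y) \leq 1$.

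This leaves the case $c = 1$ with $f \circ \iota \neq f$, in which $X$ is birational to $W \subseteq Y \times E$. Here I plan to pass to the symmetric square: the morphism $X \to Y \times Y$ defined by $x \mapsto (f(x), f(\iota x))$ is equivariant for $\iota$ on $X$ and the swap on $Y \times Y$, so it descends to a non-constant morphism $\overline{\Phi} \colon E \to Y^{(2)}$. Assuming $Y$ is not geometrically hyperelliptic (else we are done), the Abel--Jacobi map $Y^{(2)} \to \Pic^2(Y)$ is birational onto its image, so composing one obtains a non-constant morphism from $E$ to $\Jac(Y)$ (after identifying $\Pic^2(Y)$ with $\Jac(Y)$ via a choice of base divisor). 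By Poincar\'e reducibility this produces an elliptic quotient $\Jac(Y) \to E'$, and the Albanese property of the Jacobian translates this into a non-constant morphism $Y \to E'$.

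The main obstacle I anticipate is verifying that the resulting morphism $Y \to E'$ has degree exactly $2$, so that $Y$ is geometrically bielliptic, rather than only admitting some higher-degree morphism to an elliptic curve. Pinning down the degree should follow from analyzing the geometry of $\overline{\Phi}(E) \subseteq Y^{(2)}$, for instance by lifting it along the natural double cover $Y \times Y \to Y^{(2)}$ and computing the bidegree of the resulting curve in $Y \times Y$ in terms of $\deg f$; controlling this bidegree is where the bulk of the technical work of the argument lies.
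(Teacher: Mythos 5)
The paper does not prove this statement: it is quoted directly from Harris--Silverman, so the ``paper's proof'' is the citation to \cite[Proposition 1]{HaSi91}. Your strategy is in fact the same one Harris--Silverman use --- push the bielliptic structure into the symmetric square by descending $x \mapsto (f(x), f(\iota x))$ to a non-constant map $E \to Y^{(2)}$, dispose of the hyperelliptic and low-genus cases via the Abel--Jacobi map, and then extract an elliptic quotient of $\Jac(Y)$. Up to that point your argument is correct: the initial dichotomy on whether $f$ is $\iota$-invariant is fine (note that in your ``otherwise'' branch one automatically has $c=1$, since $c=2$ forces $f\circ\iota=f$), the descent to $\overline{\Phi}$ is fine, and the non-constancy of $E \to \Jac(Y)$ in the non-hyperelliptic, genus $\geq 3$ case is fine.

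The problem is that what you have actually proved at the end is only that $\Jac(Y)$ admits an elliptic quotient, equivalently that some non-constant map $Y \to E'$ exists. That conclusion is strictly weaker than biellipticity and cannot be upgraded for free: there are plenty of non-hyperelliptic, non-bielliptic curves of large gonality whose Jacobians have elliptic factors (many modular curves $X_0(N)$, for instance), so no amount of ``Albanese formalism'' alone will force the degree down to $2$. The degree of $Y \to \Jac(Y) \to E'$ is the degree of the principal polarization restricted to the corresponding elliptic subvariety, and bounding it by $2$ requires using that this elliptic curve sits inside the image of $Y^{(2)}$ in $\Pic^2(Y)$ --- concretely, one must analyze the symmetric correspondence $\widetilde{E} \subseteq Y \times Y$ obtained by pulling $\overline{\Phi}(E)$ back along $Y \times Y \to Y^{(2)}$, show its components cannot have genus $\leq 1$ (else $g(Y) \leq 1$), and then prove that the projection $\widetilde{E} \to Y$ has degree $1$, so that the degree-$2$ map $\widetilde{E} \to \overline{\Phi}(E)$ \emph{is} a bielliptic structure on $Y$. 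This is precisely the content of the key lemma in \cite{HaSi91} on elliptic curves in symmetric squares, and it is the entire technical heart of the proposition. You have correctly identified this as the remaining obstacle, but identifying it is not the same as closing it: as written, the proof establishes only ``hyperelliptic, genus $\leq 1$, or $\Jac(Y)$ has an elliptic factor,'' which does not imply the stated result.
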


\begin{lemma}\cite[Lemma 4.3]{BKX13}\label{not}
Consider a Galois cover $\phi : X \to Y$ of degree $d$ between two non-singular projective curves of genus $g_X \ge 2$ and $g_Y$, respectively. Suppose that $g_Y \ge 2$ or $d$ is odd.

\begin{enumerate}[(1)]
    \item Suppose that $2g_X+2 > d(2g_Y+2)$. Then $X$ is not geometrically hyperelliptic.
    \item Denote by $\#Y^{\sigma}$ the number of fixed geometric points of an involution $\sigma$ of $Y$. Suppose $2g_X - 2 > d \cdot \#Y^{\sigma}$ for any involution $\sigma$ on $Y$. Then, if $g_X \ge 6$, $X$ is not geometrically bielliptic.
    \item Suppose $2g_X-2 > d(2g_Y+2)$. Then, if $g_X \ge 6$, $X$ is not geometrically bielliptic. \label{not-bielliptic}
\end{enumerate}
\end{lemma}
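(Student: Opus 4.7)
My plan is to prove all three parts through a single strategy: find a canonical involution of $X$, push it down to $Y$, and compare fixed-point counts on either side of $\phi$.

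The first step is to appeal to two classical uniqueness/centrality results: for any curve of genus $\geq 2$ the hyperelliptic involution (if one exists) is unique and central in the automorphism group, and by a theorem of Accola the same is true for a bielliptic involution once $g_X \geq 6$. This is exactly why the bound $g_X \geq 6$ appears in Parts (2) and (3). Granting these facts, the canonical involution $\iota$ on $X$ commutes with the Galois group $G$ of $\phi$, and thus descends to a morphism $\bar\iota \colon Y \to Y$ that is either the identity or a non-trivial involution.

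The next step is to exclude $\iota \in G$. If $d$ is odd this follows from Lagrange's theorem since $|\iota| = 2$. If $g_Y \geq 2$, then $\iota \in G$ would force $\phi$ to factor through $X/\langle\iota\rangle$, exhibiting $Y$ as a non-constant image of $\P^1$ (in Part (1)) or of an elliptic curve (in Parts (2) and (3)), forcing $g_Y \leq 1$, a contradiction. So under the standing hypotheses $\bar\iota$ is a non-trivial involution on $Y$. The heart of the argument is then the simple bound
\[
\#X^\iota \;\leq\; d \cdot \#Y^{\bar\iota},
\]
coming from $\phi(X^\iota) \subseteq Y^{\bar\iota}$ (since $\phi \circ \iota = \bar\iota \circ \phi$) and the fact that fibers of $\phi$ have size at most $d$.

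To complete Part (1), I would use Riemann--Hurwitz on the double cover $X \to X/\langle\iota\rangle = \P^1$ to get $\#X^\iota = 2g_X + 2$, then observe that $Y/\langle\bar\iota\rangle = \P^1/\bar G$ (where $\bar G$ is the image of $G$ acting on $X/\langle\iota\rangle$) is a quotient of $\P^1$, hence $\P^1$ itself, so Riemann--Hurwitz on the resulting double cover $Y \to \P^1$ yields $\#Y^{\bar\iota} = 2g_Y + 2$; the displayed inequality then contradicts $2g_X + 2 > d(2g_Y+2)$. For Part (2), Riemann--Hurwitz on $X \to E$ gives $\#X^\iota = 2g_X - 2$, and the displayed inequality (with $\sigma = \bar\iota$) contradicts the hypothesis. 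Part (3) reduces instantly to Part (2) via the general estimate $\#Y^\sigma \leq 2g_Y + 2$ for any involution $\sigma$ on $Y$, which is itself immediate from Riemann--Hurwitz on $Y \to Y/\langle\sigma\rangle$ (the quotient having non-negative genus). The only step that requires genuine input from outside Riemann--Hurwitz is Accola's uniqueness/centrality of the bielliptic involution; this is the main obstacle I would need to cite correctly, and everything else is straightforward bookkeeping.
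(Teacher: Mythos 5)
Your proof is correct. The paper itself gives no argument here---it imports the statement verbatim from \cite[Lemma 4.3]{BKX13}---and your argument is essentially the one in that cited source: uniqueness (hence centrality) of the hyperelliptic involution for $g_X \ge 2$ and of the bielliptic involution for $g_X \ge 6$ (via Castelnuovo--Severi/Accola, which is exactly where the hypothesis $g_X \ge 6$ enters), descent of that involution to $Y$ after ruling out $\iota \in G$ using ``$d$ odd or $g_Y \ge 2$'', and the fixed-point comparison $\#X^{\iota} \le d\cdot \#Y^{\bar\iota}$ combined with Riemann--Hurwitz counts $2g_X+2$, $2g_X-2$, and $\#Y^{\sigma} \le 2g_Y+2$. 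No gaps.
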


\begin{lemma}{\cite[Lemma 5.(2)]{Rotger02}}\label{s_lemma}
Let $C/F$, $\text{char}(F) \ne 2$, be a bielliptic curve of genus $g$ with $\Aut(C) \cong C_2^s$ for some $s \ge 1$.

\begin{itemize}
    \item If $g$ is even, then $s \le 3$.
    \item If $g$ is odd, then $s \le 4$.
\end{itemize}

\end{lemma}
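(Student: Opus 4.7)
My plan is to exploit the bielliptic involution $w \in G := \Aut(C) \cong C_2^s$ in two complementary ways: through the induced action on the elliptic quotient $E := C/\langle w \rangle$, and through the character decomposition of $H^0(C, \Omega^1)$. Since $G$ is abelian, $w$ is central, so $G$ descends to $E$ with kernel exactly $\langle w \rangle$ (the deck transformations of $C \to E$), giving a faithful action of $V := G/\langle w \rangle \cong C_2^{s-1}$ on $E$. Writing $H^0(C, \Omega^1) = \bigoplus_\chi H^0_\chi$ with $g_\chi := \dim H^0_\chi$, one has $g(C/H) = \sum_{\chi : H \subseteq \ker \chi} g_\chi$ for every subgroup $H \le G$. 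The bielliptic condition gives $1 = g(E) = \sum_{\chi(w) = 1} g_\chi$, so exactly one character $\chi^*$ in $\hat G^w := \{\chi : \chi(w) = 1\}$ has $g_{\chi^*} = 1$, with all others vanishing. This splits the argument into two cases, depending on whether $\chi^*$ is trivial.

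In Case A (where $\chi^*$ is trivial and $g(C/G) = 1$), each of the remaining $2^{s-1} - 1$ nontrivial characters in $\hat G^w$ corresponds to an index-two subgroup $H_\chi = \ker \chi \supseteq \langle w \rangle$ with $g(C/H_\chi) = 1$; the cover $C/H_\chi \to C/G$ is thus a double cover between genus-one curves, hence étale by Riemann--Hurwitz. These yield pairwise non-isomorphic connected étale double covers of the genus-one curve $C/G$ (distinct subgroups give distinct intermediate covers), of which there are only three (classified geometrically by $E[2] \setminus \{0\}$), so $2^{s-1} - 1 \le 3$ and $s \le 3$. In Case B (where $g(C/G) = 0$), I would instead use the embedding $V \hookrightarrow \Aut(E) = E \rtimes \Aut(E, O)$: a commutativity computation shows $V \cap E \subseteq E[2]$ while $V/(V \cap E) \hookrightarrow \Aut(E, O)$ sits inside a cyclic group, so the rank of $V$ is at most $3$ and $s \le 4$.

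To sharpen Case B when $g$ is even, note that any $\sigma \in G$ projecting to a nontrivial translation $\bar \sigma \in V \cap E \subseteq E[2]$ would be fixed-point-free on $C$ (since $\bar \sigma$ has no fixed points on $E$, and fixed points of $\sigma$ project to those of $\bar \sigma$), and then Riemann--Hurwitz would force $g(C/\langle \sigma \rangle) = (g+1)/2$, impossible for even $g$. Hence $V \cap E = 0$ in the even case, and $V$ embeds in the cyclic group $\Aut(E, O)$, whose $2$-torsion has order at most $2$; so $|V| \le 2$ and Case B yields $s \le 2$ when $g$ is even. Combining, for even $g$ the binding bound is Case A's $s \le 3$, and for odd $g$ it is Case B's $s \le 4$, as claimed. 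The subtle point is verifying that the character-isotypic dimensions $g_\chi$ correctly recover intermediate-quotient genera and that the geometric classification of étale double covers of an elliptic curve is applied correctly; both are standard but need care in this setting.
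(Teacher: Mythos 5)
Your argument is correct, and it is worth noting that the paper itself supplies no proof of this lemma --- it is quoted verbatim from Rotger \cite[Lemma 5.(2)]{Rotger02} --- so what you have written is a genuinely self-contained justification rather than a reconstruction of anything in the text. The two pillars (the faithful action of $V = G/\langle w\rangle$ on $E = C/\langle w\rangle$, and the eigenspace decomposition of $H^0(C,\Omega^1)$ computing all intermediate quotient genera) are sound: the decomposition is valid here because the covers are tame (char $F \ne 2$ and $G$ a $2$-group), and the case split on whether $g(C/G)$ is $1$ or $0$ is exhaustive. Two small remarks. First, in Case A you could avoid the \'etale-double-cover count entirely: since $E \to C/G$ is a degree $2^{s-1}$ cover of genus-one curves, Riemann--Hurwitz forces $V$ to act freely on $E$, and a fixed-point-free automorphism of a genus-one curve is a translation (for $\alpha \in \Aut(E,O)$ nontrivial, $\alpha - 1$ is a surjective isogeny, so $\tau_P\circ\alpha$ always has a fixed point); hence $V \hookrightarrow E[2]$ and $s \le 3$ directly --- this is equivalent to your bound $2^{s-1}-1 \le 3$ but sidesteps the descent issue you flag about counting covers over a non-closed field (which your base change to $\overline{F}$ does handle, since distinct subgroups of the abelian Galois group give non-isomorphic intermediate covers after any base change). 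Second, your phrase ``sits inside a cyclic group'' for $\Aut(E,O)$ is not literally true in characteristic $3$ with $j=0$, where $\Aut(E,O) \cong \Z/3\Z \rtimes \Z/4\Z$ has order $12$; but that group still contains a unique involution, so the bound $|V/(V\cap E)| \le 2$ and hence $s \le 4$ survives. Neither point is a gap, only a matter of phrasing.
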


The following result follows from the Riemann--Hurwitz formula:

\begin{proposition} \label{proposition: genus_of_quotient}
    Let $\sigma$ be any involution on a smooth projective curve $X$ over an algebraically closed field $F$ of characteristic $0$, and let $\#X^{\sigma}$ denote the number of fixed points of $\sigma$. Then, we have the following genus formula:
    $$g(X/\langle\sigma \rangle) = \frac{1}{4}(2g(X)+2-\#X^{\sigma}).$$
\end{proposition}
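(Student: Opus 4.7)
The plan is to apply the Riemann--Hurwitz formula to the quotient morphism $\pi \colon X \to Y := X/\langle \sigma \rangle$. Since $\sigma$ has order two and acts nontrivially (if $\sigma = \id$ the statement is vacuous once one takes $\#X^\sigma$ to mean fixed points of a nontrivial involution; in the trivial case the formula is not expected to hold, so we assume $\sigma \ne \id$), the map $\pi$ is a finite separable morphism of degree $2$, and $Y$ is again a smooth projective curve over $F$.

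Next I would identify the ramification locus of $\pi$. A closed point $P \in X$ is ramified for $\pi$ if and only if it is fixed by the nontrivial element of the Galois group $\langle \sigma \rangle$, i.e., if and only if $\sigma(P) = P$. Because $\deg \pi = 2$, the ramification index at any ramified point is exactly $2$, and since $\charr(F) = 0$ the ramification is automatically tame, so the different exponent at each ramified point equals $e_P - 1 = 1$. Thus the total contribution to the ramification divisor is precisely $\#X^{\sigma}$.

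Plugging into Riemann--Hurwitz,
\[
2g(X) - 2 \;=\; 2\bigl(2g(Y) - 2\bigr) + \#X^{\sigma},
\]
and solving for $g(Y)$ gives
\[
g(Y) \;=\; \frac{1}{4}\bigl(2g(X) + 2 - \#X^{\sigma}\bigr),
\]
which is the desired formula. There is no real obstacle here: the only mild subtlety is confirming that $\pi$ is tamely ramified and that the fixed locus coincides with the ramification locus, both of which follow immediately from $\charr(F) = 0$ and $\deg \pi = 2$. (As a sanity check, since $g(Y)$ is an integer, the formula also recovers the well-known parity constraint that $\#X^{\sigma} \equiv 2g(X) + 2 \pmod 4$.)
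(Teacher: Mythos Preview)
Your proof is correct and matches the paper's approach exactly: the paper simply states that the result follows from the Riemann--Hurwitz formula, and you have spelled out precisely that computation, identifying the ramification locus of the degree-$2$ quotient map with the fixed-point set of $\sigma$ and using tameness in characteristic $0$.
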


\begin{remark}
A curve of genus $g$ is geometrically bielliptic if and only if there is an involution with $2g-2$ geometric fixed points.
\end{remark}

\begin{lemma}\cite[Lemma 4.3]{BKS23}\label{lemma: more-than-8}
Let $\sigma$ be an involution of $X$ with more than $8$ fixed points. Then either $\sigma$ is a bielliptic involution, or $X$ is not geometrically bielliptic.
\end{lemma}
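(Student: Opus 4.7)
The plan is to argue by contraposition: assume $X$ is geometrically bielliptic, so it admits some bielliptic involution $\tau$ with $E \colonequals X/\langle\tau\rangle$ an elliptic curve, and show that any involution $\sigma$ with $\#X^\sigma > 8$ must coincide with $\tau$ (hence itself be bielliptic). The key observation is that if $\sigma \neq \tau$, then we have two distinct degree-$2$ morphisms $X \to X/\langle \sigma \rangle$ and $X \to E$, which should be tightly constrained by the Castelnuovo--Severi inequality.

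Suppose for contradiction that $\sigma \neq \tau$. Since both quotient maps have prime degree $2$, any common intermediate cover $X \to W$ of degree greater than $1$ would force the two quotient maps to agree, contradicting $\sigma \neq \tau$. Hence Castelnuovo--Severi applies and yields
\[
g(X) \;\leq\; 2\, g(X/\langle \sigma \rangle) + 2 \cdot g(E) + (2-1)(2-1) \;=\; 2\, g(X/\langle \sigma \rangle) + 3.
\]
Substituting the Riemann--Hurwitz expression
\[
g(X/\langle \sigma \rangle) \;=\; \frac{2g(X) + 2 - \#X^\sigma}{4}
\]
from \cref{proposition: genus_of_quotient} and simplifying yields $\#X^\sigma \leq 8$, contradicting the hypothesis $\#X^\sigma > 8$. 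Therefore $\sigma = \tau$, and in particular $\sigma$ is a bielliptic involution.

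The main obstacle is really just a small verification: confirming that the hypothesis of Castelnuovo--Severi is met, i.e., that two distinct degree-$2$ maps cannot share a common intermediate factor of degree greater than $1$. This is immediate from prime-degree considerations, so the proof is short once one has the right combination of tools in hand (Castelnuovo--Severi applied to the two quotient maps, together with Riemann--Hurwitz). No case analysis on the genus of $X$ or on the structure of $\Aut(X)$ is required; the universal bound of $8$ drops out cleanly from the constants in the two formulas.
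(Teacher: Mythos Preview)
The paper does not actually prove this lemma; it is quoted verbatim from \cite[Lemma~4.3]{BKS23} without argument. Your proof via the Castelnuovo--Severi inequality combined with the Riemann--Hurwitz count of \cref{proposition: genus_of_quotient} is correct and is exactly the standard argument (and, to the best of my knowledge, the one given in \cite{BKS23}). The computation $g(X)\le 2g(X/\langle\sigma\rangle)+3$ together with $g(X/\langle\sigma\rangle)=\tfrac{1}{4}(2g(X)+2-\#X^\sigma)$ indeed collapses to $\#X^\sigma\le 8$, and your justification that two distinct degree-$2$ quotient maps cannot share a nontrivial common factor (since each is Galois with deck group generated by its involution) is the right way to check the Castelnuovo--Severi hypothesis.
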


\begin{lemma}\cite[Proposition 4.8]{BKS23} 
  Let $X$ be a curve of genus $g$ at least $6$ over a field of characteristic $0$. Assume that $\Aut(X)$ has a subgroup $H$ of order $2^t$ such that $2^t \nmid 2(g-1)$. Then either the bielliptic involution of $X$ is contained in $H$, or $X$ is not geometrically bielliptic.
\end{lemma}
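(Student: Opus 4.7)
The plan is to prove the contrapositive: assuming $X$ is geometrically bielliptic with bielliptic involution $\iota$, I will show that $\iota \in H$. The strategy is to let $H$ act on the fixed-point set $\textnormal{Fix}(\iota)$: the divisibility hypothesis will prevent a free action, while the structure of point-stabilizers in characteristic zero will force any non-free action to imply $\iota \in H$.

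First, I would invoke the classical theorem of Accola that a geometrically bielliptic curve of genus $g \ge 6$ has a \emph{unique} bielliptic involution. Consequently $\iota$ is preserved under conjugation by any element of $\Aut(X)$, i.e., $\iota$ is central in $\Aut(X)$. In particular $H$ normalizes $\langle \iota \rangle$ and thus preserves the set $\textnormal{Fix}(\iota) \subseteq X$. Applying Riemann--Hurwitz to the degree $2$ map $X \to X/\langle \iota \rangle$, whose target is an elliptic curve, yields $\#\textnormal{Fix}(\iota) = 2g-2$.

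Now suppose for contradiction that $\iota \notin H$. I claim that $H$ acts freely on $\textnormal{Fix}(\iota)$. Indeed, if some $h \in H$ fixes a point $P \in \textnormal{Fix}(\iota)$, then both $h$ and $\iota$ lie in the stabilizer $\Aut(X)_P$, which is cyclic in characteristic zero. Since $\iota$ is central and $h, \iota$ have $2$-power order, $\langle h, \iota \rangle$ is a cyclic $2$-group. A cyclic $2$-group has a unique element of order $2$, so either $h = 1$, or $\iota \in \langle h \rangle \subseteq H$; the latter contradicts $\iota \notin H$, forcing $h = 1$. The free $H$-action on the set $\textnormal{Fix}(\iota)$ of size $2(g-1)$ then gives $2^t = |H| \mid 2(g-1)$, contradicting the hypothesis.

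The main obstacle is the uniqueness of the bielliptic involution for $g \ge 6$ (Accola's theorem); without uniqueness we could not assert that $\iota$ is central and hence that $H$ preserves $\textnormal{Fix}(\iota)$, and the whole argument would collapse. Once uniqueness and centrality are in hand, the rest is a clean combination of Riemann--Hurwitz, cyclicity of point stabilizers in characteristic zero, and the elementary fact that a cyclic $2$-group has a unique involution.
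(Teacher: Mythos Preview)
Your argument is correct. The paper itself does not supply a proof of this lemma; it is quoted verbatim as \cite[Proposition 4.8]{BKS23} and used as a black box (to derive \cref{not-div}). So there is no in-paper proof to compare against.

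That said, the proof you give is the natural one and almost certainly matches the argument in \cite{BKS23}: uniqueness of the bielliptic involution for $g\ge 6$ forces $\iota$ to be central, hence $H$ acts on $\textnormal{Fix}(\iota)$, which has size $2g-2$; cyclicity of point stabilizers in characteristic $0$ then shows any nontrivial $h\in H$ fixing a point of $\textnormal{Fix}(\iota)$ would have $\iota\in\langle h\rangle\subseteq H$. One tiny stylistic remark: in your sentence ``Since $\iota$ is central and $h,\iota$ have $2$-power order, $\langle h,\iota\rangle$ is a cyclic $2$-group,'' the cyclicity comes from the inclusion into the cyclic stabilizer $\Aut(X)_P$ (which you already noted), not from centrality; centrality was only needed earlier to get the $H$-action on $\textnormal{Fix}(\iota)$. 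This does not affect the validity of the proof.
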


The group $W_0(D,N)$ is a subgroup of $\Aut(X_0^D(N))$ of order $2^{\omega(DN)}$. We therefore have the following corollary:

\begin{corollary}\label{not-div}
Suppose that $g(X_0^D(N)) \geq 6$ and that $X_0^D(N)$ is geometrically bielliptic. If $g(X_0^D(N)) \not \equiv 1 \;(\bmod \; 2^{\omega(DN)-1})$, then the bielliptic involution is an Atkin--Lehner involution. 
\end{corollary}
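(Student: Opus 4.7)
The plan is to apply the preceding proposition (the \cite{BKS23} Proposition 4.8 result recalled just above) directly, taking $X$ to be the Shimura curve $X_0^D(N)$ and taking the subgroup $H \subseteq \Aut(X_0^D(N))$ to be the Atkin--Lehner group $W_0(D,N)$. By the structural description recalled in \S 2.2, we have $W_0(D,N) \cong (\mathbb{Z}/2\mathbb{Z})^{\omega(DN)}$, so its order is $2^t$ with $t = \omega(DN)$.

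The key step will then be a purely elementary rewriting of the congruence hypothesis into the non-divisibility condition required by that proposition. Namely, $g \not\equiv 1 \pmod{2^{\omega(DN)-1}}$ is the same as $2^{\omega(DN)-1} \nmid (g-1)$, which is in turn equivalent to $2^{\omega(DN)} \nmid 2(g-1)$, i.e., to $2^t \nmid 2(g-1)$. Together with the standing hypothesis $g(X_0^D(N)) \ge 6$ and the assumption that $X_0^D(N)$ is geometrically bielliptic, the proposition then forces the bielliptic involution to lie in $H = W_0(D,N)$.

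To finish, I will invoke the fact recorded in \S 2.2 that every non-identity element of $W_0(D,N)$ is by construction an Atkin--Lehner involution $w_m$ for some Hall divisor $m \parallel DN$. Thus the bielliptic involution is necessarily of the form $w_m$, which is the stated conclusion. I do not anticipate any genuine obstacle: the corollary amounts to verifying that the hypothesis on $g \pmod{2^{\omega(DN)-1}}$ is exactly the translation, for this particular choice of $H$, of the non-divisibility hypothesis of the cited proposition.
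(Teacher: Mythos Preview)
Your proposal is correct and matches the paper's own argument: the corollary is obtained directly from the cited \cite[Proposition~4.8]{BKS23} by taking $H = W_0(D,N)$, noting $|H| = 2^{\omega(DN)}$, and rewriting $g \not\equiv 1 \pmod{2^{\omega(DN)-1}}$ as $2^{\omega(DN)} \nmid 2(g-1)$.
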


\section{Proof of \cref{theorem: main_thm} and \cref{delta_eq_2_biell_thm}}
From \cref{theorem: Abr} and \cref{gon_cor}, we find that a geometrically bielliptic Shimura curve $X_0^D(N)$ must have genus $g(X_0^D(N)) \leq 39$. 

\begin{lemma}\cite[Lemma 10.6]{Saia24}\label{genus_bnd_lemma}
For $D>1$ an indefinite rational quaternion discriminant and $N \in \mathbb{Z}^+$ relatively prime to $D$, we have
\[ g(X_0^D(N)) > 1 + \frac{DN}{12}\left( \frac{1}{e^\gamma \log\log(DN) + \frac{3}{\log\log{6}}} \right)- \frac{7\sqrt{DN}}{3}.\]
\end{lemma}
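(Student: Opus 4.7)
The plan is to start from the genus formula
\[
g(X_0^D(N)) = 1 + \frac{\varphi(D)\psi(N)}{12} - \frac{e_4(D,N)}{4} - \frac{e_3(D,N)}{3},
\]
and to estimate the main term $\varphi(D)\psi(N)/12$ from below and the elliptic correction $e_4/4 + e_3/3$ from above by two independent analytic inputs. Adding these estimates into the formula will produce the stated bound.

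For the main term, since $\psi(N) \geq \varphi(N)$ and $\gcd(D,N)=1$ we have $\varphi(D)\psi(N) \geq \varphi(D)\varphi(N) = \varphi(DN)$. The classical Rosser--Schoenfeld bound (1962) states that for $n \geq 3$,
\[
\frac{n}{\varphi(n)} < e^\gamma \log\log n + \frac{2.50637}{\log\log n},
\]
which a fortiori (since $2.50637 < 3$) gives $\varphi(n) > n / (e^\gamma \log\log n + 3/\log\log n)$. Since $D \geq 6$ forces $DN \geq 6$, we have $\log\log(DN) \geq \log\log 6$, so replacing $\log\log(DN)$ by $\log\log 6$ in the second summand of the denominator weakens (but still keeps valid) the bound, yielding
\[
\varphi(D)\psi(N) > \frac{DN}{e^\gamma \log\log(DN) + \frac{3}{\log\log 6}}.
\]

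For the correction term, each local factor appearing in the product formulas for $e_3(D,N)$ and $e_4(D,N)$ lies in $\{0,1,2\}$, so $e_k(D,N) \leq 2^{\omega(DN)}$ for $k \in \{3,4\}$. I then claim $2^{\omega(n)} \leq 4\sqrt{n}$ for every positive integer $n$. Since $n$ is divisible by at least $\omega(n)$ distinct primes, it suffices to check that the primorial $P_k = p_1 p_2 \cdots p_k$ satisfies $P_k \geq 4^{k-2}$, which holds for $k \leq 3$ by direct inspection ($1 \geq 1/16$, $2 \geq 1/4$, $6 \geq 1$, $30 \geq 4$) and extends to all $k \geq 3$ by induction using $p_{k+1} \geq 5 > 4$. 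Thus
\[
\frac{e_4(D,N)}{4} + \frac{e_3(D,N)}{3} \leq \Bigl(\tfrac{1}{4}+\tfrac{1}{3}\Bigr) 2^{\omega(DN)} = \frac{7 \cdot 2^{\omega(DN)}}{12} \leq \frac{7\sqrt{DN}}{3}.
\]
Substituting both estimates into the genus formula gives the inequality.

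The only genuinely technical input is the Rosser--Schoenfeld estimate; all remaining steps are elementary. The main place to be careful is the substitution $\log\log(DN) \mapsto \log\log 6$, which requires the observation that $D > 1$ forces $D \geq 6$ (since $D$ is a product of an even positive number of distinct primes), so that $DN \geq 6$ and the Rosser--Schoenfeld hypothesis $DN \geq 3$ is automatic.
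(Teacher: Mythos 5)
Your argument is correct and is essentially the derivation behind the cited lemma: the shape of the constants, namely $e^\gamma\log\log(DN)+3/\log\log 6$ in the denominator and $7\sqrt{DN}/3$ for the correction term, comes exactly from the Rosser--Schoenfeld bound on $n/\varphi(n)$ applied with $DN\ge 6$ together with the estimate $e_k(D,N)\le 2^{\omega(DN)}\le 4\sqrt{DN}$, just as you set it up. One small correction: Rosser--Schoenfeld's inequality $n/\varphi(n)<e^\gamma\log\log n+2.50637/\log\log n$ has a single exceptional value $n=223092870$ where it fails, so your ``a fortiori'' step should be supplemented by the observation that enlarging the constant to $3$ restores the inequality there as well (one checks $n/\varphi(n)\approx 6.113$ against $e^\gamma\log\log n+3/\log\log n\approx 6.28$), after which the rest of your argument goes through unchanged.
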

With \cref{genus_bnd_lemma}, we find that if $DN > 78530$ then we have that $g(X_0^D(N)) > 39$ and thus $X_0^D(N)$ is not bielliptic.  

If $X_0^D(N)$ is geometrically bielliptic, then $X_0^D(1)$ must be geometrically bielliptic, geometrically hyperelliptic, or of genus $g(X_0^D(1)) \leq 1$ by \cref{HS_prop}. By prior results of Voight \cite{Voight09} (for genus at most one), Ogg \cite{Ogg83} (for geometrically hyperelliptic of genus at least $2$), and Rotger \cite{Rotger02} (for geometrically bielliptic), it follows that 
\begin{align*} 
D \in \{&6,10,14,15,21,22,26,33,34,35,38,39,46,51,55,57,58,62,65,69,74,\\
&77,82,85,86,87,94,95,106,111,115,118,119,122,129,134,143,146, \\
&159,166,178,194,202,206,210,215,314,330,390,462,510,546\}. 
\end{align*}
Note that all of these values of $D$ satisfy $\omega(D) = 2$. Using this and our genus bound \cref{genus_bnd_lemma}, we arrive at $357$ \textbf{candidate pairs} $(D,N)$. These are computed in the file \texttt{narrow{\_}to{\_}candidates.m} in \cite{Rep}, and comprise the list in \texttt{candidate{\_}pairs.m}. All but $56$ of these candidate pairs have squarefree level $N$. 


\subsection{Automorphisms of candidate pairs}

In this section, we prove for certain candidate pairs $(D,N)$ that $\Aut(X_0^D(N)) = W_0(D,N)$ is the group of Atkin--Lehner involutions, and for others we determine restrictions on the involutions in $\Aut(X_0^D(N))$. This will help us determine when this Shimura curve is \emph{not} bielliptic (over $\Q$), by restricting our study to bielliptic involutions in $W_0(D,N)$.

Our main tools come from \cite{KR08}, which extends work of \cite{Rotger02}. In particular, we make use of the following result:

\begin{lemma}\label{all_atkin_lehner_lemma}
Suppose that $D$ is an indefinite rational quaternion discriminant and $N$ is a squarefree integer with $\gcd(D,N) = 1$. Also suppose that $g \colonequals g(X_0^D(N)) \geq 2$. If any of the following statements holds:
\begin{enumerate}
    \item $e_3(D,N) = e_4(D,N) = 0$,
    \item $2 \mid DN$, for all primes $p \mid N$ we have $\left(\frac{-4}{p}\right) \ne -1$ and for at most one prime $p \mid D$ we have $\left(\frac{-4}{p}\right) = 1$.
    \item $3 \mid DN$, for all primes $p \mid N$ we have $\left(\frac{-3}{p}\right) \ne -1$ and for at most one prime $p \mid D$ we have $\left(\frac{-3}{p}\right) = 1$.
    \item $\omega(DN) = \textnormal{ord}_2(g-1)+2$,
\end{enumerate}
then $\Aut(X_0^D(N)) = W_0(D,N)$. 
\end{lemma}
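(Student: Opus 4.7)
The plan is to derive all four implications from the classification of automorphism groups of Shimura curves with squarefree level due to Kontogeorgis--Rotger \cite{KR08}, which extends Rotger's \cite{Rotger02} treatment of the level-one case. The central structural statement is that any automorphism of $X_0^D(N)$ (for squarefree $N$ coprime to $D$) that is not Atkin--Lehner must be an \emph{exceptional involution}, and the existence of such an involution is tightly controlled by the elliptic points of the Fuchsian group uniformizing $X_0^D(N)$. These elliptic points are exactly the CM points by $\Z[i]$ (of order $2$, counted by $e_4(D,N)$) and the CM points by $\Z[\omega]$ (of order $3$, counted by $e_3(D,N)$).

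First I would record the reduction: writing $U(D,N) \colonequals \Aut(X_0^D(N))/W_0(D,N)$, the results of \cite{KR08} bound $|U(D,N)|$ in terms of the elliptic point data and force any non-trivial element of $U(D,N)$ to come from a prescribed list of exceptional involutions, each of which must preserve the set of order-$2$ elliptic points and the set of order-$3$ elliptic points. Case (1) is then immediate: if $e_3(D,N)=e_4(D,N)=0$ there are no elliptic points at all, so there is no room for an exceptional involution, and $\Aut(X_0^D(N)) = W_0(D,N)$.

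Cases (2) and (3) are handled by showing that the Kronecker-symbol conditions force the elliptic points to be arranged in a very constrained way relative to the Atkin--Lehner action. Using \cref{Eichler-Thm} together with \cref{thm_local_emb_Ogg}, the hypothesis in (2) that $2 \mid DN$, that $\left(\frac{-4}{p}\right) \ne -1$ for every $p \mid N$, and that at most one $p \mid D$ has $\left(\frac{-4}{p}\right)=1$, translates into a strong restriction on the local embedding numbers $\nu_p(\Z[i],\mathcal{O}_N)$, bounding $e_4(D,N)$ and placing all CM points by $\Z[i]$ in a single orbit under a specific Atkin--Lehner involution. The characterization in \cite{KR08} then rules out a non-Atkin--Lehner involution that would have to permute these points in a different way; case (3) is the analogous argument with $\Z[\omega]$ replacing $\Z[i]$.

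Case (4) is a $2$-group counting argument: $W_0(D,N) \cong (\Z/2\Z)^{\omega(DN)}$ is an elementary abelian $2$-subgroup of $\Aut(X_0^D(N))$, and any strict supergroup inside $\Aut(X_0^D(N))$ would enlarge the order by a factor of at least two. The hypothesis $\omega(DN) = \ord_2(g-1)+2$ pushes this enlarged order past the standard $2$-adic divisibility constraints for automorphism groups of curves of genus $g$, which once combined with the exceptional-involution characterization of \cite{KR08} forces $U(D,N)$ to be trivial. The main obstacle is organizing the bookkeeping of the elliptic-point orbits in cases (2) and (3) so that the hypotheses really do match the form in which \cite{KR08} rules out exceptional involutions; once the statement from \cite{KR08} is in hand, each of the four implications reduces to a short verification.
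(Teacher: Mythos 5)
Your proposal is correct and follows essentially the same route as the paper: the paper's proof consists precisely of citing Kontogeorgis--Rotger, namely \cite[Thm 1.6 (i)]{KR08} for case (1), \cite[Thm 1.7 (i)]{KR08} for cases (2) and (3), and \cite[Thm 1.6 (iii)]{KR08} for case (4). Your additional sketch of the internal mechanics of those results (exceptional involutions controlled by the order-$2$ and order-$3$ elliptic points, and the $2$-adic divisibility argument for case (4)) is consistent with how \cite{KR08} proceeds but is not reproduced in the paper, which treats the lemma as a direct consequence of the cited theorems.
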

\begin{proof}
    The first part is \cite[Thm 1.6 (i)]{KR08}, while the second and third parts are \cite[Thm 1.7 (i)]{KR08}. The fourth part follows from \cite[Thm 1.6 (iii)]{KR08}. 
\end{proof}

\begin{lemma}\label{all_atkin_lehner_lemma_2}
 Suppose that $(D,N)$ is a candidate pair with $N$ squarefree and $g(X_0^D(N)) \geq 2$. If either 
 \begin{itemize}
    \item $g$ is even and $\omega(DN) = 3$, or
    \item $g$ is odd and $\omega(DN) = 4$
 \end{itemize}
and $X_0^D(N)$ is geometrically bielliptic, then $\Aut(X_0^D(N)) = W_0(D,N)$. 
\end{lemma}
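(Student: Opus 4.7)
The plan is to argue by contradiction. Suppose that $W_0(D,N) \subsetneq \Aut(X_0^D(N))$. For $N$ squarefree and coprime to $D$, with $g(X_0^D(N)) \geq 2$, the structural results of Kontogeorgis--Rotger \cite{KR08} that underlie \cref{all_atkin_lehner_lemma} also imply that $\Aut(X_0^D(N))$ is an elementary abelian $2$-group, so we may write $\Aut(X_0^D(N)) \cong C_2^s$ for some $s \geq 1$. Since $W_0(D,N) \cong C_2^{\omega(DN)}$ sits inside $\Aut(X_0^D(N))$, the strict inclusion forces $s \geq \omega(DN)+1$.

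I would then apply Rotger's $s$-lemma (\cref{s_lemma}): under our assumption that $X_0^D(N)$ is geometrically bielliptic, $s \leq 3$ if $g \colonequals g(X_0^D(N))$ is even and $s \leq 4$ if $g$ is odd. In the first case of the hypothesis ($g$ even with $\omega(DN)=3$), combining these forces $4 \leq s \leq 3$, a contradiction. In the second case ($g$ odd with $\omega(DN)=4$), we similarly obtain $5 \leq s \leq 4$, again a contradiction. We conclude that $\Aut(X_0^D(N)) = W_0(D,N)$.

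The main obstacle is justifying the elementary abelian structure of $\Aut(X_0^D(N))$ from \cite{KR08} cleanly, since this is slightly stronger than the more commonly quoted fact that $\Aut(X_0^D(N))$ is an abelian $2$-group for squarefree level. If a direct citation is awkward, a backup route would be to first establish $[\Aut(X_0^D(N)) : W_0(D,N)] \leq 2$ via the relevant theorems of \cite{KR08}, then observe that the non-trivial coset (if present) is represented by an involution, which upgrades $\Aut(X_0^D(N))$ to an elementary abelian $2$-group of rank $\omega(DN) + 1$ and makes \cref{s_lemma} directly applicable. As an entirely computational fallback, the hypotheses cut out only finitely many candidate pairs, and one could instead verify \cref{all_atkin_lehner_lemma} on each surviving pair by hand.
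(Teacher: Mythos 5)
Your proposal is correct and follows essentially the same route as the paper: the paper cites \cite[Prop 1.5]{KR08} directly for the fact that $\Aut(X_0^D(N)) \cong (\Z/2\Z)^s$ with $s \geq \omega(DN)$ when $N$ is squarefree, and then concludes immediately from \cref{s_lemma}. Your worry about justifying the elementary abelian structure is unfounded, since that is precisely the content of the cited proposition, so neither of your fallback routes is needed.
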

\begin{proof}
Our hypotheses on $N$ and the genus imply $\Aut(X_0^D(N)) \cong \left(\mathbb{Z}/2\mathbb{Z}\right)^s$ for some $s \geq r$ by \cite[Prop 1.5]{KR08}. The result then follows from \cref{s_lemma}.
\end{proof}


Using \cref{all_atkin_lehner_lemma} and \cref{all_atkin_lehner_lemma_2}, we explicitly compute in \texttt{narrow{\_}to{\_}candidates.m} that if $X_0^D(N)$ is geometrically bielliptic then $\Aut(X_0^D(N)) = W_0(D,N)$ for all of our candidate pairs $(D,N)$ with $N$ squarefree and $g(X_0^D(N)) \geq 2$ except for possibly the following $25$ pairs:
\begin{align*}
 \{ &( 10, 19 ),
    ( 10, 31 ),
    ( 10, 43 ),
    ( 10, 67 ),
    ( 10, 79 ),
    ( 10, 103 ),
    ( 21, 5 ),
    ( 21, 17 ), \\
    &( 21, 29 ),
    ( 22, 7 ),
    ( 22, 31 ),
    ( 33, 5 ),
    ( 33, 17 ),
    ( 34, 7 ),
    ( 34, 19 ), 
    ( 46, 7 ),\\
    & ( 55, 7 ), 
    ( 57, 5 ),
    ( 58, 7 ),
    ( 69, 5 ),
    ( 77, 5 ),
    ( 82, 7 ),
    ( 94, 7 ),
    ( 106, 7 ),
    ( 118, 7 )\}. 
\end{align*}
The genera of the curves $X_0^D(N)$ for the above $25$ pairs $(D,N)$ are among the set 
\[ \{5, 9, 13, 17, 21, 25, 29, 33, 37\}. \]

We can exclude $(D,N) \in \{(10,31),(33,5)\}$ from further consideration, as the two corresponding curves $X_0^D(N)$ have a bielliptic Atkin--Lehner involution (see \cref{table: squarefree}), which must be unique given that $g(X_0^D(N)) \ge 6$ for both. 

\begin{remark}\label{Humbert-remark}
By \cite{KMV11}, a curve of genus $5$ can have $0, 1, 2, 3$ or $5$ bielliptic involutions. The genus $5$ curves with $5$ biellliptic involutions are called \textbf{Humbert curves} and form a $2$-dimensional family.

For $(D,N) \in \{(21,5),(22,7)\}$, the curve $X_0^D(N)$ has genus $5$ and there are $3$ geometrically bielliptic involutions of Atkin--Lehner type (see \cref{table: squarefree}). Thus, either $X_0^D(N)$ has exactly $5$ bielliptic involutions (and so is a Humbert curve), or has exactly $3$ bielliptic involutions.

The Jacobian of a Humbert curve is geometrically isogenous to a product of five elliptic curves (\cite[Prop. 2.4]{FMZ18}), whereas we compute with Magma via Ribet's isogeny (\cref{Ribet_isog}) that the Jacobians of $X_0^{21}(5)$ and $X_0^{22}(7)$ each have a geometrically simple abelian surface as a factor in their isogeny decomposition. Thus, neither curve is a Humbert curve, and each has all involutions being Atkin--Lehner. 
\end{remark}

\begin{remark}\label{genus_5_not_squarefree_rmk}
For $(D,N) \in \{(6,25),(10,9)\}$, the curve $X_0^D(N)$ has genus $5$ and there exists one geometrically bielliptic involution of Atkin--Lehner type (see \cref{table: non-squarefree}). From \cref{Humbert-remark}, we then have that $X_0^D(N)$ has $1, 2, 3,$ or $5$ bielliptic involutions. Note that for both of these pairs we also have genus $2$ quotients by Atkin--Lehner involutions. For example $X_0^{6}(25)/\langle w_{2} \rangle$ and $X_0^{10}(9)/\langle w_2 \rangle$ both have genus $2$. Thus, by \cite[Lemma 4.11 (b)]{BKS23}, neither curve $X_0^D(N)$ can have exactly $2$ geometrically bielliptic involutions; this would imply that two Atkin--Lehner involutions do not commute. Therefore, if $X_0^D(N)$ does have a geometrically bielliptic involution which is not of Atkin--Lehner type for these pairs, then it has exactly $3$ or $5$ geometrically bielliptic involutions in total. If this is the case, then by \cite[Lemma 2.3]{KMV11} all of the bielliptic involutions commute, while by \cite[Lemma 4.11 (a)]{BKS23} the non-Atkin--Lehner bielliptic involutions do not commute with any Atkin--Lehner involutions other than the bielliptic one. 
\end{remark}

\subsection{Squarefree level $N$}\label{sqfree_section}
We have $301$ candidate pairs $(D,N)$ where $N$ is squarefree, listed in \texttt{sqfree{\_}candidate{\_}pairs.m}. 
For all except for $55$ of these pairs, 
there exists $m | DN$ such that 
\[
\#X_0^D(N)^{w_m} \ne 2g(X_0^D(N))-2 \quad \text{ and } \quad  \#X_0^D(N)^{w_m} > 8.
\]
Thus by \cref{lemma: more-than-8}, we can conclude that these $246$ pairs do not correspond to bielliptic Shimura curves.
We know that $41$ of the remaining $55$ admit at least one bielliptic Atkin--Lehner involution by explicit genus computations, which we perform in the file \texttt{genus{\_}1{\_}quotients{\_}and{\_}ranks.m} using the \texttt{quot{\_}genus} defined in the file \texttt{quot{\_}genus.m}. 

We list below the $41$ pairs $(D,N)$ with $N$ squarefree for which $X_0^D(N)$ admits at least one bielliptic Atkin--Lehner involution:
\begin{align*} 
& D= 6, \quad \; \; N \in \{5,7,11,13,17,19,23,41,43,47,71 \},\\
& D = 10, \quad N \in \{3,7,13,17,29,31 \}, \\
& D = 14, \quad N \in \{ 3,5,13,19  \}, \\ 
& D = 15, \quad N \in \{ 2,7,11,13,17  \}, \\ 
& D = 21, \quad N \in \{ 2,5,11  \}, \\ 
& D = 22, \quad N \in \{3,7,17  \}, \\ 
& D = 26, \quad N = 5, \\ 
& D = 33, \quad N \in \{2,5,7 \}, \\ 
& D = 34, \quad N = 3, \\ 
& D = 35, \quad N \in \{2,3  \}, \\ 
& D = 38, \quad N = 3, \\ 
& D = 46, \quad N = 5.  
\end{align*}

More specifically, we list all triples $(D,N,m)$ such that $N$ is squarefree and such that $X_0^D(N)/\langle w_m \rangle$ has genus one in \cref{table: squarefree}. We attempt using \cref{not}(\ref{not-bielliptic}) to prove that the remaining $14$ are not bielliptic, which works in all cases except $(D,N) = (34,7)$. We handle this pair seperately:

\begin{lemma}\label{34-7-lemma}
The curve $X_0^{34}(7)$ is not geometrically bielliptic.
\end{lemma}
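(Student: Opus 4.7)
The plan is to show that $X_0^{34}(7)$, which has genus $g = 9$ by the genus formula in the excerpt, has no involution with exactly $2g - 2 = 16$ geometric fixed points. First, I would apply Ogg's fixed point theorem (\cref{thm_Ogg_fixed_pts}) together with the local embedding count (\cref{thm_local_emb_Ogg}) to tabulate, for each of the seven nontrivial Atkin--Lehner involutions $w_m$ with $m \parallel 238$, the number of fixed points: this amounts to computing class numbers of the imaginary quadratic orders listed in \cref{thm_Ogg_fixed_pts} and local embedding numbers at the primes dividing $238/m$. The aim is to verify that no $w_m$ has exactly $16$ fixed points, thereby ruling out every Atkin--Lehner involution as a bielliptic involution.

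To handle the possibility of a bielliptic involution that is not Atkin--Lehner, I would invoke the structural results of \cite{KR08}: since $N = 7$ is squarefree and $g \geq 2$, the group $\Aut(X_0^{34}(7))$ is $2$-elementary abelian of the form $(\Z/2\Z)^s$ containing $W_0(34,7) \cong (\Z/2\Z)^3$, so $s \geq 3$. If $X_0^{34}(7)$ were geometrically bielliptic, the odd genus $g=9$ combined with \cref{s_lemma} would force $s \leq 4$, leaving only the case $s = 4$ to exclude. To do so, my strategy is to exhibit an Atkin--Lehner quotient $Y = X_0^{34}(7)/\langle w_m \rangle$ of genus $\geq 2$ that is geometrically neither hyperelliptic nor bielliptic; \cref{HS_prop} then furnishes the desired contradiction. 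The fixed-point counts from the first step also determine the genera of all such $Y$, so the candidates for $m$ are explicitly accessible.

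The main obstacle is this last step: certifying the non-hyperelliptic and non-bielliptic nature of the chosen $Y$. A natural route is a Magma-aided computation of the decomposition of $\Jac(X_0^{34}(7))$ via Ribet's isogeny as the $34$-new part of $J_0(238)$, combined with inspection of the simple factors of $\Jac(Y)$ and their fields of definition, so as to show that every genus-one quotient of $Y$ would have to arise from an elliptic factor already accounted for by the known Atkin--Lehner quotients. Alternatively, one could count fixed points of the involutions on $Y$ induced by the elements of $\Aut(X_0^{34}(7))$ commuting with $w_m$, using the same Ogg-style machinery one level down, and verify that none produces the number of fixed points required for $Y$ to be hyperelliptic or bielliptic.
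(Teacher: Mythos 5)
Your first step (checking that no Atkin--Lehner involution of $X_0^{34}(7)$ has $2g-2 = 16$ fixed points) is fine and matches what the paper records: this curve has no genus one Atkin--Lehner quotient. The gap is in your strategy for excluding a non-Atkin--Lehner bielliptic involution. You propose to find an Atkin--Lehner quotient $Y = X_0^{34}(7)/\langle w_m \rangle$ of genus $\geq 2$ that is neither geometrically hyperelliptic nor geometrically bielliptic and then apply \cref{HS_prop}; but you never exhibit such a $Y$, and the structure of the Atkin--Lehner lattice here works against you. The quotient $X_0^{34}(7)/\langle w_{14}, w_{17}\rangle$ has genus $0$, so the three genus $3$ quotients $X_0^{34}(7)/\langle w_{14}\rangle$, $X_0^{34}(7)/\langle w_{17}\rangle$, $X_0^{34}(7)/\langle w_{238}\rangle$ are automatically hyperelliptic; similarly one finds genus one intermediate quotients sitting under $X_0^{34}(7)/\langle w_2\rangle$ and $X_0^{34}(7)/\langle w_7\rangle$, making those genus $5$ quotients geometrically bielliptic. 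So most candidates for $Y$ are disqualified outright. For any surviving candidate, neither of your certification methods closes the argument: a Jacobian decomposition via Ribet's isogeny can rule out bielliptic only when there are no geometric elliptic factors and says nothing about hyperellipticity, while counting fixed points of \emph{induced} involutions misses involutions of $Y$ that do not lift to $X_0^{34}(7)$ -- exactly the kind of automorphism you are trying to control in the first place. Your reduction to $s \in \{3,4\}$ via \cref{s_lemma} is correct but does not by itself locate or constrain the hypothetical extra involution.

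The paper's proof turns the inconvenient genus $0$ quotient into the key tool: since $X_0^{34}(7) \to X_0^{34}(7)/\langle w_{14}, w_{17}\rangle$ is a degree $4$ map to a genus $0$ curve and $g(X_0^{34}(7)) = 9 > 5$, the Castelnuovo--Severi inequality forces this map to factor through any bielliptic quotient map; as the degree $4$ cover is Galois with group $\{1, w_{14}, w_{17}, w_{238}\}$, the bielliptic involution would have to be one of $w_{14}, w_{17}, w_{238}$, and none of these has a genus one quotient. This handles Atkin--Lehner and non-Atkin--Lehner involutions in one stroke. If you want to salvage your approach, you would need either an explicit model of some quotient $Y$ or an argument of this Castelnuovo--Severi type; as written, the decisive step is not established.
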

\begin{proof}
Suppose that $X_0^{34}(7)$ has a geometrically bielliptic involution $\sigma$. The quotient $X_0^{34}(7)/\langle w_{14}, w_{17} \rangle$ has genus $0$. By the Castelnuovo--Severi inequality, because the genus of $X_0^{34}(7)$ is $9$, and in particular is larger than $5$, it must be the case that the covering map to $X_0^{34}(7)/\langle w_{14}, w_{17} \rangle$ factors through $\sigma$. This is a Galois cover, and so it follows that $\sigma \in W_0(34,7)$ (more specifically, $\sigma \in \{ w_{14},w_{17}, w_{238}\})$. We find that this is not the case, as $X_0^{34}(7)$ has no genus one Atkin--Lehner quotients. 
\end{proof}

For a triple $(D,N,m)$ for which the quotient $X_0^D(N)/\langle w_m \rangle$ has genus one, we may list out all of the quadratic CM points on $X_0^D(N)$ using the results of \cite{GR06} or of \cite{Saia24}. For $N$ squarefree, we can then use \cite[Cor 5.14]{GR06} to determine, for a quadratic point $P$ on $X_0^D(N)$, the residue field of the image of $P$ on $X_0^D(N)/\langle w_m \rangle$ under the natural quotient map. Doing so, we determine rational CM points of genus one quotients $X_0^D(N)/\langle w_m \rangle$ with $N$ squarefree:

\begin{proposition}
    For each discriminant $\Delta_K$ of an imaginary quadratic field $K$ listed in the row in this table corresponding to the triple $(D,N,m)$, the quotient $X_0^D(N)/\langle w_m \rangle$ has a $\mathbb{Q}$-rational $K$-CM point.

    For each triple $(D,N,m)$ appearing in this table, we therefore have that $X_0^D(N)/\langle w_m \rangle$ is an elliptic curve over $\mathbb{Q}$, and hence $X_0^D(N)$ is bielliptic over $\mathbb{Q}$. 
\end{proposition}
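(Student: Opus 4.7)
The plan is, for each triple $(D,N,m)$ in the table, to exhibit an imaginary quadratic field $K$ of the listed discriminant $\Delta_K$ together with a $K$-CM point on $X_0^D(N)$ whose image on the quotient $X_0^D(N)/\langle w_m \rangle$ is $\Q$-rational. Once such a point is produced, the quotient is a genus one curve with a rational point and therefore an elliptic curve over $\Q$; composing with the natural degree two quotient map then exhibits $X_0^D(N) \to X_0^D(N)/\langle w_m \rangle$ as a $\Q$-rational bielliptic structure, proving the second assertion.

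The first step is to enumerate the quadratic $K$-CM points on $X_0^D(N)$ for each candidate pair. Since $D > 1$, all CM residue fields are totally complex of even degree, so quadratic CM points correspond to orders $R \subseteq K$ whose count of optimal embedding classes, computed via \cref{Eichler-Thm} and \cref{thm_local_emb_Ogg}, combined with the Shimura reciprocity factor of $2$ indexing the residue field inside the ring class field, produces a point defined over a quadratic extension of $\Q$. Restricting to orders $R$ whose discriminant is small enough that $h(R)\prod_{p\mid DN}\nu_p(R,\mathcal{O}_N)$ is consistent with a quadratic residue field yields a short list of candidate CM points.

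For each such $K$-CM point $P$, the residue field of its image on $X_0^D(N)/\langle w_m \rangle$ is determined by \cite[Cor. 5.14]{GR06}: the image is $\Q$-rational precisely when $w_m$ interchanges $P$ with its Galois conjugate, a condition that can be read off from the local optimal embedding data at primes dividing $m$ together with the splitting behavior of those primes in $K$. For each triple in the table, a finite search over small imaginary quadratic orders guided by these local conditions produces an explicit $K$ and an explicit $R$-CM point on $X_0^D(N)$ descending to a $\Q$-rational point on the quotient; the resulting discriminant is the $\Delta_K$ recorded in the corresponding row.

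The main obstacle is bookkeeping rather than theory: one must consistently use the Eichler-order moduli interpretation of $X_0^D(N)$ when assigning CM orders (as emphasized in the excerpt), correctly transport this data through the Atkin--Lehner action, and then feed it into the rationality criterion of \cite{GR06}. All of these steps are algorithmic, and the verification for each triple is carried out in Magma following the approach of \cite{Saia24}, with the scripts recorded in the accompanying repository \cite{Rep}.
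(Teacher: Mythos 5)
Your proposal is correct and follows essentially the same route as the paper: enumerate the quadratic CM points on $X_0^D(N)$ via the methods of \cite{Saia24} (optimal embedding counts through \cref{Eichler-Thm} and \cref{thm_local_emb_Ogg}), apply the rationality criterion of \cite[Cor. 5.14]{GR06} to the images under the Atkin--Lehner quotient, and verify everything by explicit Magma computation, concluding that a genus one quotient with a rational point is an elliptic curve over $\Q$. This matches the paper's proof, which carries out exactly these computations in the files from \cite{SaiaRep} and \cite{Rep}.
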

\begin{proof}
    We determine that each listed quotient has $K$-CM points with residue field $\mathbb{Q}$ for the specified fields $K$ by explicit computation as described above. Quadratic CM points on the relevant curves $X_0^D(N)$ are listed using the methods of \cite{Saia24} with code from \cite{SaiaRep}, and computations determining residue fields on quotients are performed in the file \texttt{rationality{\_}by{\_}CM.m} in \cite{Rep} using Rotger's result. Because these quotients have a rational point and each have genus $1$ (they each appear in \cref{table: squarefree}), they are bielliptic quotients of $X_0^D(N)$ over $\Q$. 
\end{proof}

\begin{remark}
If $N$ is not squarefree, then the work of \cite{Saia24} can still provide a full list of quadratic CM points on $X_0^D(N)$. The squarefree restriction on this method comes from the determination in \cite[Cor. 5.14]{GR06} of the field of moduli of the image of a CM point on $X_0^D(N)$ under an Atkin--Lehner quotient. With the squarefree restriction on $N$, the work of \cite{GR06} is enough to list all of the quadratic CM points on $X_0^D(N)$. 
\end{remark}

\rowcolors{2}{white}{gray!20}
\begin{longtable}{|l|l|}
\caption{Rational CM points on genus one quotients $X_0^D(N)/\langle w_m \rangle$ with $N>1$ squarefree}\label{CM_points_table} \\
\hline
\rowcolor{gray!50}
$(D,N,m)$ & $\Delta_K$ such that $X_0^D(N)/\langle w_m \rangle$ has a $\mathbb{Q}$-rational $K$-CM point \\ \hline
$(6,5,15)$ & $-4$ \\ \hline
$(6,7,14)$ & $-3$ \\ \hline
$(6,13,26)$ & $-3$ \\ \hline
$(6,13,39)$ & $-4$\\ \hline
$(6,17,51)$ & $-4$ \\ \hline
$(6,17,102)$ & $-4, -19, -43, -67$ \\ \hline
$(6,23,138)$ & $-19, -43, -67$\\ \hline
$(6,41,246)$ & $-4, -43, -163$\\ \hline
$(6,71,426)$ & $-67, -163$\\ \hline
$(10,3,10)$ & $-3$\\ \hline
$(10,3,15)$ & $-8$\\ \hline
$(10,13,130)$ & $-3, -43$ \\ \hline
$(10,17,170)$ & $-8, -43, -67$\\ \hline
$(10,29,290)$ & $-67$ \\ \hline
$(14,3,21)$ & $-8$\\ \hline
$(14,3,42)$ & $-8, -11$ \\ \hline
$(14,5,35)$ & $-4$\\ \hline
$(14,5,70)$ & $-4 -11$\\ \hline
$(14,13,182)$ & $-4, -43$ \\ \hline
$(14,19,266)$ & $-8, -67$\\ \hline
$(15,2,30)$ & $-7$ \\ \hline
$(15,7,105)$ & $-3, -7$\\ \hline
$(15,13,195)$ & $-3, -43$\\ \hline
$(15,17,255)$ & $-43, -67$\\ \hline
$(21,2,21)$ & $-4, -7$ \\ \hline
$(21,2,42)$ & $-4, -7$ \\ \hline
$(21,5,105)$ & $-4$\\ \hline
$(21,11,231)$ & $-7, -43$ \\ \hline
$(22,7,154)$ & $-3$\\ \hline
$(22,17,374)$ & $-4, -67$\\ \hline
$(33,2,66)$ & $-4$ \\ \hline
$(33,7,231)$ & $-3$ \\  \hline
$(35,2,35)$ & $-8$ \\ \hline
$(46,5,230)$ & $-4$ \\ \hline
\end{longtable}

In the thesis work of Nualart-Riera \cite{NR15}, one finds defining equations for the Atkin--Lehner quotients of $X_0^D(N)$ for $(D,N)$ in
\[
\{(6,5),(6,7),(6,11),(10,3),(10,7),(10,9),(22,5),(22,7) \}.
\]

As defining equations for Shimura curves and their quotients are difficult to compute, one may use Ribet's isogeny to compute the rank of $\Jac(X_0^D(N)/\langle w_m \rangle)$ over $\mathbb{Q}$:

\begin{theorem}\cite{Ribet90},\cite{BD96}\label{Ribet_isog}
Let $X_0^D(N)$ be a Shimura curve with squarefree level $N$. Then there exists an isogeny defined over $\Q$

\begin{equation}
    \psi : J_0(DN)^{D-\new} \rightarrow \Jac(X_0^D(N)),
\end{equation}
such that, for each $w_m(D,N) \in W_0(D,N)$, we have

\begin{equation}
    \psi^*(w_m(D,N)) = (-1)^{\omega(\gcd(D,m))}w_m(1,DN) \in \Aut_{\Q}(J_0(DN)).
\end{equation} 
\end{theorem}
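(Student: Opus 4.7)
The plan is to establish the theorem via the Jacquet--Langlands correspondence combined with Faltings' isogeny theorem, supplemented by a local representation-theoretic computation to pin down the sign of the Atkin--Lehner action. Both sides are semisimple abelian varieties over $\Q$ with compatible Hecke actions, so the strategy reduces to matching Hecke eigensystems, descending to $\Q$, and tracking the action of $w_m$ on each isotypic component.

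First I would invoke the Jacquet--Langlands correspondence: it provides a Hecke-equivariant bijection between weight $2$ newforms of level $DN$ that are new at every $p \mid D$ (parametrizing the simple factors of $J_0(DN)^{D-\new}$ up to $\Q$-isogeny) and quaternionic modular forms of discriminant $D$ and level $N$ (parametrizing those of $\Jac(X_0^D(N))$), with matching Hecke eigenvalues at all primes $\ell \nmid DN$. Combined with Faltings' semisimplicity and isogeny theorems, this yields an isogeny $\psi$ between the two abelian varieties. Descent to $\Q$ follows because the matching of eigensystems is $\Gal(\overline{\Q}/\Q)$-equivariant: both sides realize, as Hecke modules, the same $\Q$-rational quotient of the abstract Hecke algebra, and Galois conjugation permutes the eigenforms on each side in compatible fashion.

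Next I would compute $\psi^*(w_m(D,N))$ by working isotypically. On the simple factor of $\Jac(X_0^D(N))$ attached to a quaternionic eigenform $g$, the operator $w_m(D,N)$ acts by a scalar $\prod_{p \mid m}\epsilon_p(g)$, and analogously the corresponding factor of $J_0(DN)^{D-\new}$ attached to the Jacquet--Langlands transfer $f$ of $g$ sees $w_m(1,DN)$ act by $\prod_{p \mid m}\epsilon_p(f)$. The crucial local fact, which is essentially the content of the local Jacquet--Langlands correspondence at the ramified primes, is that $\epsilon_p(f) = -\epsilon_p(g)$ whenever $p \mid D$, while $\epsilon_p(f) = \epsilon_p(g)$ for $p \nmid D$ (i.e., for $p \mid N$). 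Multiplying over primes $p \mid m$ and observing that the sign flips occur exactly at primes in $\gcd(D,m)$ yields the formula $(-1)^{\omega(\gcd(D,m))}$.

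The main obstacle is justifying the sign at the ramified primes with sufficient rigor to transfer it to the full isogeny $\psi$. Faltings produces $\psi$ only abstractly, so one must either give an explicit geometric construction of $\psi$ via Ribet's level-raising techniques (realizing the quaternionic Jacobian as a quotient of the new part of a modular Jacobian through mod-$\ell$ congruences that are compatible with Atkin--Lehner), or argue locally by comparing how the Atkin--Lehner operator acts on the special (Steinberg) representation of $\GL_2(\Q_p)$ for $p \mid D$ versus its one-dimensional image under local Jacquet--Langlands on $B_{D,p}^\times$. Establishing this sign compatibility is the substance of the theorem and is the point at which one invokes the work of \cite{Ribet90} and its refinement by \cite{BD96}.
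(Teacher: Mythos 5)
First, a point of comparison: the paper does not prove this theorem at all --- it is imported wholesale from Ribet \cite{Ribet90} and Bertolini--Darmon \cite{BD96}, so there is no internal proof to measure your argument against. Judged against the actual proofs in those references, your sketch takes a genuinely different route. Ribet does not obtain $\psi$ from Faltings; he constructs it geometrically via the Cerednik--Drinfeld $p$-adic uniformization of $X_0^D(N)$ at a prime $p \mid D$, comparing character groups of the toric parts of the special fibers of $J_0(DN)$ and $\Jac(X_0^D(N))$, and it is this explicit construction that makes the isogeny Hecke-equivariant \emph{as a morphism} and lets Bertolini--Darmon track the Atkin--Lehner operators through it, producing the sign $(-1)^{\omega(\gcd(D,m))}$.

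Your proposal has a genuine gap beyond what you concede. The Jacquet--Langlands plus Faltings argument produces isogenies between corresponding simple isogeny factors, but assembling these into a single $\Q$-isogeny $\psi$ that intertwines $w_m(D,N)$ with $\pm w_m(1,DN)$ requires $w_m$ to act by a scalar on each isotypic block, and this fails in general: a form contributing to $J_0(DN)^{D-\new}$ must be new at every $p \mid D$ but may be old at a prime $q \mid N$, in which case the $q$-isotypic component has multiplicity two and $w_q(1,DN)$ does not act by a scalar there. So the ``multiply local signs over $p \mid m$'' step does not go through for arbitrary $m \parallel DN$ without the explicit geometric control of $\psi$ that Ribet's construction supplies. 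More fundamentally, you acknowledge that the sign flip at $p \mid D$ --- the entire content of the displayed formula --- is ``the point at which one invokes'' the cited works; since the statement being proved \emph{is} the cited result, this is circular: the proposal reduces the theorem to itself rather than proving it. What would be needed is either the character-group comparison of \cite{Ribet90} with the Atkin--Lehner bookkeeping of \cite{BD96}, or a complete local computation showing that the geometric Atkin--Lehner involution at $p \mid D$ on the Shimura curve side acts by the negative of the corresponding $U_p$-eigenvalue sign on the modular side, carried out for an isogeny one has actually constructed.
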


\begin{remark}\label{ribet_general_level_rmk}
While the squarefree restriction on the level $N$ appears in Ribet's paper \cite{Ribet90} and other recommended sources such as \cite{Helm07}, it is known that the result can be generalized: for arbitrary $N$ coprime to $D$, the Jacobian $\Jac(X_0^D(N))$ is isogenous to $J_0(DN)^{D-\new}$. The relevant input to get the generalization is the existence of Hecke-invariant isomorphisms between spaces of modular forms attached to the Shimura curve, and a direct sum of such spaces attached to $X_0(dN)$ for $d \mid D$. Work of Hijikata--Pizer--Shemanske \cite{HPS89a, HPS89b}, and also recent work of Martin \cite[Theorem 1.1 (ii)]{Mar20} using an alternate approach, provide exactly this for general level in our situation. We will therefore be justified to use the existence of Ribet's isogeny in the next section for non-squarefree level $N$. 
\end{remark}

As we usually do not have defining equations for Shimura curves, we are going to use Ribet's isogeny in the following way. Magma is able to provide a decomposition (up to isogeny)
\[
J_0(DN) \sim (J_0(DN))^{m+} \oplus (J_0(DN))^{m-}.
\]
We pick one of the two subspaces according to the parity of $\omega(\gcd(D,m))$, and after taking the $D$-new part we obtain an abelian variety isogenous to $\Jac(X_0^D(N)/\langle w_m \rangle)$. In the cases we are left to study, if we know that $X_0^D(N)/\langle w_m \rangle$ is an elliptic curve over $\Q$, we are able to use Magma to compute the rank of $\Jac(X_0^D(N)/\langle w_m \rangle)$ over $\mathbb{Q}$. Code for these computations is located in the file \texttt{genus{\_}1{\_}quotients{\_}and{\_}ranks.m} in \cite{Rep}. 

In summary, we determine all $41$ triples $(D,N,m)$ with relatively prime $D,N > 1$  and $N$ squarefree such that $X_0^D(N)/\langle w_m \rangle$ has genus one. We then work to determine, for each triple, whether the corresponding quotient is an elliptic curve over $\mathbb{Q}$: we use CM points or the results of \cite{NR15},\cite{PS23} to answer this in the affirmative, and use the results on local points from \S 3 or the results of \cite{NR15} to answer this in the negative. When this answer is positive, we use Ribet's isogeny to determine ranks over $\mathbb{Q}$. 

Our results are summarized in \cref{table: squarefree}. Reasonings for whether the quotient is rationally bielliptic provided in the table should not be taken to be exhaustive; in some places we provide multiple reasonings, but in general arguments other than those listed may be successful. The only triples for which we are unable to determine whether the quotient has a rational point are $(6,13,6),(6,23,69),(21,5,21),(26,5,26),(35,3,35),$ and $(38,3,38)$, but we still determine that the rank of $\text{Jac}\left(X_0^6(23)/\langle w_{69} \rangle\right)$ over $\mathbb{Q}$ is $0$ in each of these cases.

\rowcolors{2}{white}{gray!20}
\begin{longtable}{|l|l|l|l|}
\caption{Squarefree $N$ and bielliptic Atkin--Lehner involutions}\label{table: squarefree} \\ \hline 
\rowcolor{gray!50}
$(D,N,m)$ & $g(X_0^D(N))$ & $X_0^D(N)/\langle w_m \rangle (\mathbb{Q}) \neq \emptyset$? & reason \\ \hline
$(6,5,3)$  & $1$  &  no & \cite{NR15} \\ \hline
$(6,5,5)$  & $1$  &  no & \cite{NR15}  \\ \hline
$(6,5,15)$  & $1$ & yes (rank $0$) & \cite{NR15}, CM-points  \\ \hline
$(6,7,2)$  & $1$  & no & \cite{NR15},\cite{Ogg83}  \\ \hline
$(6,7,7)$  & $1$  & yes (rank $0$) & \cite{NR15}  \\ \hline
$(6,7,14)$  & $1$  & yes (rank $0$) & \cite{NR15}, CM-points  \\ \hline
$(6,11,6)$  & $3$   & no & \cite{NR15},\cite{PS23}  \\ \hline
$(6,11,22)$  & $3$   & no & \cite{NR15},\cite{PS23}  \\ \hline
$(6,11,33)$  & $3$   & no & \cite{NR15},\cite{PS23}  \\ \hline
$(6,13,6)$  & $1$   & \textbf{not known} (rank 0) & N/A  \\ \hline
$(6,13,26)$  & $1$   & yes (rank $0$) & CM-points  \\ \hline
$(6,13,39)$  & $1$   & yes (rank $0$) & CM-points  \\ \hline
$(6,17,2)$  & $3$   & no & \cite{PS23}  \\ \hline
$(6,17,51)$  & $3$   & yes (rank $0$) & \cite{PS23}  \\ \hline
$(6,17,102)$  & $3$   & yes (rank $1$) & \cite{PS23}  \\ \hline
$(6,19,3)$  & $3$   & no & \cite{PS23}  \\ \hline
$(6,19,19)$  & $3$   & no & \cite{PS23} \\ \hline
$(6,19,57)$  & $3$   & no & \cite{PS23}  \\ \hline
$(6,23,46)$  & $5$   & no & \cite{Ogg85} \\ \hline
$(6,23,69)$  & $5$  & \textbf{not known} (rank $0$) & N/A   \\ \hline
$(6,23,138)$  & $5$  & yes (rank $1$) & CM-points  \\ \hline
$(6,41,246)$  & $7$  & yes (rank $1$) & CM-points  \\ \hline
$(6,43,129)$  & $7$  & no & \cite{Ogg83}  \\ \hline
$(6,47,94)$  & $9$   & no & \cite{Ogg85}  \\ \hline
$(6,71,426)$  & $13$  & yes (rank $1$) & CM-points  \\ \hline
$(10,3,6)$  & $1$   & no & \cite{NR15}  \\ \hline
$(10,3,10)$  & $1$   & yes (rank $0$) & \cite{NR15}, CM-points \\ \hline
$(10,3,15)$  & $1$   & yes (rank $0$) & \cite{NR15}, CM-points \\ \hline
$(10,7,2)$  & $1$   & no & \cite{NR15},\cite{Ogg85}  \\ \hline
$(10,7,7)$  & $1$   & no & \cite{NR15}  \\ \hline
$(10,7,14)$  & $1$   & no & \cite{NR15}  \\ \hline
$(10,13,10)$  & $3$   & no & \cite{PS23}  \\ \hline
$(10,13,13)$  & $3$   & no & \cite{PS23}  \\ \hline
$(10,13,130)$  & $3$  & yes (rank $1$) & \cite{PS23}, CM-points  \\ \hline
$(10,17,170)$  & $7$   & yes (rank $1$) & CM-points \\ \hline
$(10,29,290)$  & $11$   & yes (rank $1$) & CM-points  \\ \hline
$(10,31,62)$  & $9$   & no & \cite{Ogg85}  \\ \hline
$(14,3,2)$  & $3$   & no & \cite{PS23}  \\ \hline
$(14,3,21)$  & $3$   & yes (rank $0$) & \cite{PS23}, CM-points   \\ \hline
$(14,3,42)$  & $3$   & yes (rank $0$) &  \cite{PS23}, CM-points \\ \hline
$(14,5,2)$  & $3$   & no & \cite{PS23} \\ \hline
$(14,5,35)$ \footnote{There is a typo in \cite{GY17}. The formula of $w_{35} \in W_0(14,5)$ has an extra minus sign in the second coordinate.}  & $3$   & yes (rank $0$) &  \cite{PS23}, CM-points  \\ \hline
$(14,5,70)$  & $3$   & yes (rank $0$) &  \cite{PS23}, CM-points  \\ \hline
$(14,13,182)$  & $7$   & yes (rank $0$) & CM-points  \\ \hline
$(14,19,266)$  & $11$   & yes (rank $0$) & CM-points  \\ \hline
$(15,2,3)$  & $3$   & yes (rank $0$) & \cite{PS23}  \\ \hline
$(15,2,10)$  & $3$   & no & \cite{PS23}  \\ \hline
$(15,2,30)$ & $3$   & yes (rank $0$) & \cite{PS23}, CM-points  \\ \hline
$(15,7,3)$  & $5$   & no & \cite{Ogg85}  \\ \hline
$(15,7,7)$  & $5$   & no & \cite{Ogg85} \\ \hline
$(15,7,105)$  & $5$   & yes (rank $0$) & CM-points \\ \hline
$(15,11,55)$  & $9$   & no & \cite{Ogg85}  \\ \hline
$(15,13,195)$  & $9$   & yes (rank $0$) & CM-points  \\ \hline
$(15,17,255)$  & $13$   & yes (rank $0$) & CM-points  \\ \hline
$(21,2,2)$  & $3$   & no &  \cite{PS23}  \\ \hline
$(21,2,21)$  & $3$   & yes (rank $0$) & \cite{PS23}  \\ \hline
$(21,2,42)$  & $3$   & yes (rank $0$) & \cite{PS23}  \\ \hline
$(21,5,15)$  & $5$   & no & \cite{Ogg85}  \\ \hline
$(21,5,21)$  & $5$   & \textbf{not known} (rank $0$) & N/A\\ \hline
$(21,5,105)$  & $5$  & yes (rank $0$) & CM-points  \\ \hline
$(21,11,231)$  & $13$  & yes (rank $0$) & CM-points  \\ \hline
$(22,3,3)$  & $3$   & no & \cite{NR15},\cite{PS23}  \\ \hline
$(22,3,11)$  & $3$  & no & \cite{NR15},\cite{PS23}  \\ \hline
$(22,3,33)$  & $3$  & no & \cite{NR15},\cite{PS23} \\ \hline
$(22,7,14)$  & $5$  & no & \cite{NR15}  \\ \hline
$(22,7,77)$  & $5$  & yes (rank $0$) & \cite{NR15} \\ \hline
$(22,7,154)$  & $5$  & yes (rank $1$) & CM-points  \\ \hline
$(22,17,374)$  & $15$  & yes (rank $1$) & CM-points \\ \hline
$(26,5,26)$  & $7$  & \textbf{not known} (rank $0$) & N/A  \\ \hline
$(33,2,66)$  & $5$  & yes (rank $0$) & CM-points  \\ \hline
$(33,5,55)$  & $9$  & no & \cite{Ogg85} \\ \hline
$(33,7,231)$  & $13$ & yes (rank $0$) & CM-points  \\ \hline
$(34,3,17)$  & $5$   & no & \cite{Ogg83}  \\ \hline
$(35,2,35)$  & $7$   & yes (rank $0$) & CM-points \\ \hline
$(35,3,35)$  & $9$   & \textbf{not known} (rank $0$)  &  N/A \\ \hline
$(38,3,38)$  & $7$   & \textbf{not known} (rank $0$)  & N/A  \\ \hline
$(46,5,230)$  & $11$  & yes (rank $0$) & CM-points \\ \hline
\end{longtable}

\subsection{Non-squarefree level $N$}\label{not_sqfree_section}
We have $56$ candidate pairs $(D,N)$ where $N$ is not squarefree, listed in \texttt{not{\_}sqfree{\_}candidate{\_}pairs.m}. For each of these pairs except for
\[
 (6, 25),(10, 9),(14, 9),(15, 8),(21, 4),(22, 9),(33, 4),(39,4),
\]
we compute using the code for genera and fixed point counts in \texttt{quot{\_}genus.m} that there exists $m \parallel DN$ such that 
\[
\#X_0^D(N)^{w_m} \ne 2g(X_0^D(N))-2, \quad \text{and} \quad  \#X_0^D(N)^{w_m}> 8,
\]
thus, by \cref{lemma: more-than-8}, $X_0^D(N)$ is not bielliptic.

For $(D,N) \in \{ (14,9),(21,4),(22,9),(33,4) \}$ we have $g \in \{ 7, 11 \},$ and using \cref{not-div} we obtain that $X_0^{D}(N)$ can only have a bielliptic involution of Atkin--Lehner type.

For $(D,N) \in \{ (15,8), (21,4), (39,4) \}$ we have that $X_0^D(N)$ has a bielliptic Atkin--Lehner involution. As $g(X_0^D(N)) > 6$, it follows that this bielliptic involution is unique. 

For $(D,N) \in \{ (6,25),(10,9) \}$ the curve $X_0^D(N)$ has genus $5$. While this curve has exactly one bielliptic involution of Atkin--Lehner type, we are not sure whether it also admits a bielliptic involution that is not of Atkin--Lehner type (see \cref{genus_5_not_squarefree_rmk}). We compute using Magma that $J_0(D\cdot N)^{D-\new}$ contains no positive rank elliptic curves, though, so by Ribet's isogeny (see \cref{ribet_general_level_rmk}) we know that $X_0^D(N)$ has finitely many quadratic points.

\rowcolors{2}{white}{gray!20}
\begin{longtable}{|l|l|l|l|}
\caption{Non-squarefree $N$ and bielliptic Atkin--Lehner involutions}\label{table: non-squarefree} \\ \hline 
\rowcolor{gray!50}
$(D,N,m)$ & $g(X_0(D,N))$ & $X_0^D(N)/\langle w_m \rangle (\mathbb{Q}) \neq \emptyset$? & reason \\ \hline
$(6,25,150)$  & $5$  & \textbf{not known} (rank $0$) & N/A   \\ \hline
$(10,9,90)$  & $5$  &  yes (rank $0$) & \cite{NR15}  \\ \hline
$(15,8,15)$  & $9$  &  no & \cite{NR15} and \cite{Ogg85} \\ \hline
$(21,4,7)$  & $7$  &  no & \cite{NR15} and \cite{Ogg85} \\ \hline
$(39,4,39)$  & $13$  &  no & \cite{NR15} and \cite{Ogg85} \\ \hline
\end{longtable}

\section{Sporadic points on $X_0^D(N)$}
We begin by recalling the definition of a sporadic point:

\begin{definition}
Let $X$ be a curve over a number field $F$. A point $x \in X$ is \textbf{sporadic} if $\text{deg}(x) := [F(x) : F] < \textnormal{a.irr}_F(X)$. In other words, $x$ is sporadic if there are only finitely many points $y \in X$ with $\text{deg}(y) \leq \text{deg}(x)$.
\end{definition}

In \cite[\S 10]{Saia24}, the author pursued the question of whether the curves $X_0^D(N)$ and $X_1^D(N)$ with $D>1$ and $\gcd(D,N) = 1$ have a sporadic point, following the pursuit of the same question in the $D=1$ case in \cite[\S 8]{CGPS22}. For both of these families of curves, it is proven using a combination of \cref{theorem: Abr} and a result of Frey \cite{Frey94}, which bounds $\textnormal{a.irr}_\mathbb{Q}(X_0^D(N))$ above and below in terms of $\text{gon}_\mathbb{Q}(X_0^D(N))$, that $X_0^D(N)$ and $X_1^D(N)$ have sporadic CM points for $DN$ sufficiently large. 

This work then narrows down the list of pairs $(D,N)$ for which it remains to be proven whether $X_0^D(N)$ has a sporadic point (and similarly for $X_1^D(N)$), by using known results on $\textnormal{a.irr}_{\mathbb{Q}}(X_0^D(N))$ and $\textnormal{a.irr}_{\mathbb{Q}}(X_1^D(N))$ and computations of the least degrees of CM points on these curves in \cite{SaiaRep}. In the case of $D>1$, the following main result is reached:

\begin{theorem}{\cite[Thm 10.9]{Saia24}}\label{sp_thm_Saia}
\begin{enumerate}
\item For all but at most $393$ explicit pairs $(D,N)$, consisting of a rational quaternion discriminant $D>1$ and a positive integer $N$ coprime to $D$, the Shimura curve $X_0^D(N)$ has a sporadic CM point. For at least $64$ of these pairs, $X_0^D(N)$ has no sporadic points. 
\item For all but at most $394$ explicit pairs $(D,N)$, consisting of a rational quaternion discriminant $D>1$ and a positive integer $N$ coprime to $D$, the Shimura curve $X_1^D(N)$ has a sporadic CM point. For at least $54$ of these pairs, $X_1^D(N)$ has no sporadic points.  
\end{enumerate}
\end{theorem}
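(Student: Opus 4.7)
The plan is to prove \cref{sp_thm_Saia} by first using gonality and genus estimates to reduce to a finite (and explicit) list of pairs $(D,N)$, and then to analyze that list pair by pair using the formula for the least degree of a CM point on $X_0^D(N)$ (respectively $X_1^D(N)$) together with whatever information is available about $\textnormal{a.irr}_{\mathbb{Q}}$. The broad goal on the ``positive'' side is to show that for all but finitely many pairs, some CM point has degree strictly smaller than $\textnormal{a.irr}_{\mathbb{Q}}(X_0^D(N))$, so that it is automatically sporadic; on the ``negative'' side, we show for specific pairs that every CM point on $X_0^D(N)$ has degree at least $\textnormal{a.irr}_{\mathbb{Q}}(X_0^D(N))$ (and the same analysis must be done for $X_1^D(N)$).

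First I would establish the qualitative upper bound on $DN$. Frey's theorem gives $\textnormal{a.irr}_{\mathbb{Q}}(X) \geq \tfrac12 \gon_{\mathbb{Q}}(X) \geq \tfrac12\gon_{\mathbb{C}}(X)$, so combining with Abramovich's \cref{theorem: Abr} we obtain $\textnormal{a.irr}_{\mathbb{Q}}(X_0^D(N)) \geq \tfrac{21}{400}(g(X_0^D(N))-1)$. Using the explicit genus bound \cref{genus_bnd_lemma}, this grows essentially linearly in $DN$ (up to a $\log\log$ factor). On the other hand, using the correspondence between CM points and optimal embeddings recalled in \cref{opt_emb_def}--\cref{Eichler-Thm}, one can write down an explicit formula for the least degree $d_{\textnormal{CM}}(D,N)$ of a CM point on $X_0^D(N)$: it is of the form $h(R)$ for a well-chosen imaginary quadratic order $R$, divided by suitable factors coming from Atkin--Lehner orbits. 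A standard Siegel/Brauer-type bound gives $h(R) \ll |\textnormal{disc}(R)|^{1/2+\varepsilon}$, and one verifies that the optimizing order $R$ can be chosen with discriminant essentially $O(1)$ in $DN$, so that $d_{\textnormal{CM}}(D,N)$ is at most polylogarithmic in $DN$. Comparing the two growth rates yields an explicit $B$ such that whenever $DN > B$ the least degree of a CM point is strictly smaller than the lower bound on $\textnormal{a.irr}_{\mathbb{Q}}$, and hence such a CM point is sporadic.

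For the remaining finitely many pairs $(D,N)$ with $DN \leq B$, I would proceed by explicit computation. For each pair, I would (i) compute $d_{\textnormal{CM}}(D,N)$ exactly using the formulas from \cref{thm_local_emb_Ogg} (and the sharper tabulations in the author's prior code), and (ii) collect the best available information on $\textnormal{a.irr}_{\mathbb{Q}}(X_0^D(N))$ from the known lists of genus $0,1$ curves (Voight), hyperelliptic curves (Ogg, Guo--Yang), bielliptic curves with a positive-rank elliptic quotient (Rotger in the $N=1$ case), Abramovich--Harris type results in degrees $2,3$, and the Frey/Abramovich lower bound. Whenever $d_{\textnormal{CM}}(D,N) < \textnormal{a.irr}_{\mathbb{Q}}(X_0^D(N))$, we deduce a sporadic CM point; whenever $d_{\textnormal{CM}}(D,N) \geq \textnormal{a.irr}_{\mathbb{Q}}(X_0^D(N))$ \emph{and} one can moreover argue that no non-CM point of smaller degree can exist (e.g.\ because the curve itself has $\textnormal{a.irr}_{\mathbb{Q}}$ equal to the minimum possible degree of any known point), we conclude no sporadic points exist. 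The pairs for which neither conclusion can be reached form the ``at most $393$'' exceptional list, while the ``at least $64$'' figure records those where the no-sporadic-points verdict can be confirmed.

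Finally, for $X_1^D(N)$ I would run the analogous strategy, using the covering $X_1^D(N) \to X_0^D(N)$ of degree $\phi(N)/2$ to transfer both CM-degree computations (a CM point on $X_0^D(N)$ of degree $d$ lifts to a CM point of degree dividing $d \cdot \phi(N)/2$, with the exact factor computable via the torsion on the underlying CM abelian surface) and the gonality/$\textnormal{a.irr}$ bounds via the multiplicativity of gonality under covers. The main obstacle throughout is the absence of sharp values of $\textnormal{a.irr}_{\mathbb{Q}}$: the Frey/Abramovich bound is far from tight for small $DN$, and to get the negative counts $64$ and $54$ one must squeeze out every available structural input (bielliptic/trielliptic classifications, explicit Jacobian decompositions via Ribet's isogeny \cref{Ribet_isog}, Mordell--Weil rank computations over $\mathbb{Q}$) to pin down $\textnormal{a.irr}_{\mathbb{Q}}$ exactly on as many small-$DN$ curves as possible.
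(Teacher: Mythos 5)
This statement is not proved in the paper at all: it is \cite[Thm.\ 10.9]{Saia24}, recalled verbatim as background for \cref{sporadic_thm}, so there is no internal proof to compare against. That said, your proposal reconstructs essentially the strategy that the paper attributes to the cited work in the paragraph preceding the theorem: combine Abramovich's linear-in-genus gonality bound (\cref{theorem: Abr}) with Frey's inequality bounding $\textnormal{a.irr}_{\mathbb{Q}}(X_0^D(N))$ below by half the gonality to force a sporadic CM point once $DN$ is large, then handle the finite remainder by explicitly computing $d_{\textnormal{CM}}$ and invoking the known genus-$\leq 1$, hyperelliptic, and bielliptic classifications to control $\textnormal{a.irr}_{\mathbb{Q}}$; the transfer to $X_1^D(N)$ via the degree-$\phi(N)/2$ covering is likewise the expected route. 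One quantitative claim in your sketch is wrong, though not fatally so: the optimizing CM order cannot be taken to have discriminant $O(1)$, since a $K$-CM point exists on $X_0^D(N)$ only if every prime dividing $D$ is non-split in $K$, and no fixed imaginary quadratic $K$ satisfies this for all $D$. The order that always works is $\Z[\sqrt{-DN}]$ (it underlies the Fricke fixed points, cf.\ \cref{thm_Ogg_fixed_pts}), which gives $d_{\textnormal{CM}}(X_0^D(N)) \ll_\varepsilon (DN)^{1/2+\varepsilon}$ rather than a polylogarithmic bound; this still loses to the $\gg DN/\log\log(DN)$ growth of the genus from \cref{genus_bnd_lemma}, so the large-$DN$ step survives with the corrected exponent. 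Finally, the specific constants $393$, $394$, $64$, $54$ are outputs of the particular computations and $\textnormal{a.irr}_{\mathbb{Q}}$ inputs available in \cite{Saia24}; they cannot be recovered from the outline alone, but that is inherent to any blind reconstruction of a computational classification.
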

In this section, we apply our results on bielliptic Shimura curves $X_0^D(N)$ to improve on \cref{sp_thm_Saia}.

\subsection{Shimura curves with infinitely many quadratic points}

The result of Hindry \cite[Remarque, p. 221]{Hindry} and Harris--Silverman \cite[Corollary 3]{HaSi91} mentioned in the introduction states that if $X$ is a curve of genus at least $2$ over a number field $F$ and has $\textnormal{a.irr}_F(X) = 2$, then $X$ is either hyperelliptic or is bielliptic with a degree $2$ map to an elliptic curve over $F$ of positive rank. We recalled in \S 1 the full list of hyperelliptic curves $X_0^D(N)$ of genus at least $2$, and the list of $D$ such that $X_0^D(1)$ is not hyperelliptic and has infinitely many quadratic points. Combining this with our study of bielliptic curves $X_0^D(N)$, we obtain the following result.

\begin{theorem}\label{delta_eq_2}
We have that $\textnormal{a.irr}_{\mathbb{Q}}(X_0^D(N)) = 2$ with $D>1$ and $\gcd(D,N) = 1$ if and only if the pair $(D,N)$ is in the following set:
\begin{align*} 
\{
 &(6,1),(6,5), (6,7),(6,11), (6,13), (6,17),(6,19),(6,23),(6,29),(6,31),(6,37), \\
 &(6,41),(6,71),(10,1),(10,3),(10,7), (10,11), (10,13),(10,17),(10,23),(10,29), \\
 &(14,1),(14,5),(15,1),(15,2),(21,1),(22,1),(22,3),(22,5),(22,7),(22,17), \\
 & (26,1),(33,1),(34,1),(35,1), (38,1),(39,1),(39,2), (46,1),(51,1),(55,1), \\
 & (57,1),(58,1),(62,1),(65,1),(69,1),(74,1),(77,1),(82,1),(86,1),(87,1)\\
& (94,1),(95,1),(106,1),(111,1),(118,1),(119,1),(122,1),(129,1),(134,1), \\
&(143,1),(146,1),(159,1),(166,1),(194,1),(206,1),(210,1),(215,1),(314,1),\\
& (330,1),(390,1),(510,1),(546,1)\}. 
\end{align*}
\end{theorem}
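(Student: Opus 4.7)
The plan is to assemble existing ingredients rather than prove anything substantially new: the result is a packaging of the Harris--Silverman dichotomy, the hyperelliptic classifications of $X_0^D(N)$ from \cite{Ogg83} and \cite{GY17}, Rotger's \cite[Theorem 9]{Rotger02} for $N=1$, the present \cref{delta_eq_2_biell_thm} for $N>1$, together with a direct analysis in genus $\leq 1$. Since $X_0^D(N)(\R) = \emptyset$ for $D>1$, we have $\textnormal{a.irr}_{\Q}(X_0^D(N)) \geq 2$ in every case considered, so the task is to pin down exactly when equality holds.

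For $g(X_0^D(N)) \geq 2$, I would apply the Harris--Silverman theorem \cite[Corollary 3]{HaSi91}: $\textnormal{a.irr}_{\Q}(X_0^D(N)) = 2$ if and only if $X_0^D(N)$ is hyperelliptic over $\Q$ or bielliptic over $\Q$ with a degree $2$ map to an elliptic curve of positive Mordell--Weil rank over $\Q$. The ``only if'' direction is exactly that corollary, while the ``if'' direction is automatic, since either possibility yields infinitely many quadratic points by pullback of $\Q$-points from $\P^1$ or from a positive-rank elliptic curve. The hyperelliptic pairs are those listed by Ogg and Guo--Yang (recalled in the introduction); the non-hyperelliptic bielliptic-with-positive-rank-quotient pairs are given by Rotger \cite[Theorem 9]{Rotger02} when $N=1$ and by \cref{delta_eq_2_biell_thm} when $N>1$.

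The low-genus contributions come from the pairs in Voight's list \cite{Voight09}. In genus zero, each such $X_0^D(N)$ is a conic over $\Q$ admitting quadratic CM points (guaranteed by \cref{Eichler-Thm} applied to embeddings of imaginary quadratic orders into $\mathcal{O}_N$); the conic thus acquires a rational point over the CM field and hence admits infinitely many quadratic points over $\Q$. In genus one, a case-by-case check of the eleven pairs -- either via Ribet's isogeny (\cref{Ribet_isog} together with \cref{ribet_general_level_rmk}) and explicit rank computations on the relevant isogeny factors of $J_0(DN)$, or via tabulated information already in the literature on these specific Shimura curves -- confirms that $X_0^D(N)$ acquires positive Mordell--Weil rank over an appropriate quadratic field of definition of a CM point, yielding infinitely many quadratic points.

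Taking the union of the pairs contributed in each genus range produces exactly the displayed set. The substantive work has already been carried out in \cref{delta_eq_2_biell_thm}, so the main obstacle here is one of bookkeeping rather than of mathematics: one must verify that every pair in each contributing list is reflected in the final display, and conversely that no pair outside the display can sneak in, which in practice amounts to cross-referencing the four classification sources enumerated above.
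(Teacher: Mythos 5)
Your proposal is correct and follows essentially the same route as the paper: Harris--Silverman reduces the genus $\geq 2$ case to the hyperelliptic lists of Ogg and Guo--Yang together with Rotger's \cite[Theorem 9]{Rotger02} for $N=1$ and \cref{delta_eq_2_biell_thm} for $N>1$, and the genus $\leq 1$ pairs from Voight's list are handled separately; the paper's own proof is just a pointer to the $N>1$ bielliptic computations plus the material recalled in the introduction. One small caution on your genus-one step: arguing via positive Mordell--Weil rank over a quadratic field is more than is needed and is not guaranteed to succeed a priori (the Jacobian and the relevant twist could both have rank $0$); the robust argument is that a quadratic CM point (which exists by \cref{Eichler-Thm}) gives a rational effective divisor of degree $2$, hence by Riemann--Roch a degree $2$ map to $\P^1_{\Q}$, and pulling back $\P^1(\Q)$ yields infinitely many quadratic points since $X_0^D(N)(\Q)=\emptyset$ --- the same reasoning disposes of the genus-zero conics.
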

\begin{proof}
    This follows immediately from our work in \cref{sqfree_section} and \cref{not_sqfree_section}, determining which bielliptic Shimura curves $X_0^D(N)$ with $N>1$ admit a degree $2$ map over $\Q$ to an elliptic curve of positive rank over $\Q$, with relevant computations being performed in \texttt{genus{\_}1{\_}quotients{\_}and{\_}ranks.m} in \cite{Rep}. 
\end{proof}

\subsection{Sporadic points}
In \cite[\S 10]{Saia24}, the author describes and implements computations of the least degree of a CM point on $X_0^D(N)$ or $X_1^D(N)$ for a fixed pair $(D,N)$ with $D>1$ and $\gcd(D,N) = 1$. We denote these quantities by $d_{\text{CM}}(X_0^D(N))$ and $d_{\text{CM}}(X_1^D(N))$ for the respective curves. Using such computations and \cref{delta_eq_2}, we arrive at the following theorem. 

\begin{theorem}\label{sporadic_thm}
    \begin{enumerate}
        \item For the pairs $(D,N)$ in the following list, the curve $X_0^D(N)$ has no sporadic points:
        \[\{ (6,17),(6,23),(6,41),(6,71),(10,13),(10,17),(10,29),(22,7),(22,17) \}. \]
        \item For $(D,N) \in \{(6,23), (6, 71),(10, 17),(10, 29)\}$, the curve $X_1^D(N)$ has no sporadic CM points.
        \item Of the $320$ pairs for which we remain unsure of whether $X_0^D(N)$ has a sporadic CM point following \cref{sp_thm_Saia} and part (1), all but at most $56$ have a sporadic CM point. We list these $56$ pairs in \cref{unknown_X0_table}.
        \item Of the $336$ pairs for which we remain unsure of whether $X_1^D(N)$ has a sporadic CM point following \cref{sp_thm_Saia} and part (2), all but at most $263$ have a sporadic CM point. These pairs comprise the union of those in \cref{unknown_X0_table} and those in \cref{unknown_X1_table}. 
    \end{enumerate}
\end{theorem}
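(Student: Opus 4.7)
The plan is to separate the theorem into its four parts and use \cref{delta_eq_2_biell_thm} to extend the bookkeeping of \cite[Thm. 10.9]{Saia24}.

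For part (1), the key observation is that for each of the nine listed pairs $(D,N)$, \cref{delta_eq_2_biell_thm} gives $\textnormal{a.irr}_\Q(X_0^D(N)) = 2$. A sporadic point on such a curve would necessarily have degree $1$, i.e., would be $\Q$-rational. But $D > 1$ implies $X_0^D(N)(\R) = \emptyset$ by \cite[Thm. 0]{Sh75}, so these curves have no rational points, hence no sporadic points of any kind. This part is essentially immediate from our main theorems.

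For part (2), I would analyze the natural covering $\pi : X_1^D(N) \to X_0^D(N)$ of degree $\varphi(N)/2$. For each of the four pairs $(D,N) \in \{(6,23),(6,71),(10,17),(10,29)\}$, the curve $X_0^D(N)$ has infinitely many quadratic points by \cref{delta_eq_2_biell_thm}, and pulling these back through $\pi$ yields an infinite family of points on $X_1^D(N)$ of degree at most $\varphi(N)$. Hence $\textnormal{a.irr}_\Q(X_1^D(N)) \leq \varphi(N)$. Next, I would invoke the algorithms of \cite{Saia24} (with the code in \cite{SaiaRep}) to compute $d_{\mathrm{CM}}(X_1^D(N))$ exactly for each of these pairs, and verify in each case the inequality $d_{\mathrm{CM}}(X_1^D(N)) \geq \textnormal{a.irr}_\Q(X_1^D(N))$. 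Since every CM point has degree at least $d_{\mathrm{CM}}$, this rules out sporadic CM points.

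For parts (3) and (4), the approach is pure bookkeeping. Starting from \cref{sp_thm_Saia}, which left $393$ (resp.\ $394$) pairs undetermined for $X_0^D(N)$ (resp.\ $X_1^D(N)$) and identified $64$ (resp.\ $54$) pairs with no sporadic points, I add to the "no sporadic" list the $9$ pairs from part (1) (resp.\ the $4$ from part (2)), which brings the unresolved count to $320$ (resp.\ $336$). For each pair in the remaining list, I would compare $d_{\mathrm{CM}}$ to an upper bound for $\textnormal{a.irr}_\Q$ obtained by combining the gonality–to–genus bounds of \cref{theorem: Abr}, the Frey-style bounds from \cite{Frey94}, and the new upper bounds coming from our bielliptic classification (whenever a bielliptic quotient of positive rank can be lifted through $\pi$). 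A pair is certified to possess a sporadic CM point whenever $d_{\mathrm{CM}} < \textnormal{a.irr}_\Q$, and the remaining pairs are tabulated in \cref{unknown_X0_table} and \cref{unknown_X1_table}.

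The main obstacle I anticipate is the tightness of the upper bound on $\textnormal{a.irr}_\Q(X_1^D(N))$ in part (2) and in the $X_1^D(N)$ portion of part (4): the naive bound $\varphi(N)$ from pulling back degree-$2$ points may not beat $d_{\mathrm{CM}}$ for many pairs, so sharper estimates on the splitting of fibers of $\pi$ over the bielliptic locus (using the Atkin–Lehner orbit structure) may be required. Once these bounds are in place, the rest of parts (3)--(4) reduces to a finite verification that can be executed with the existing Magma routines in \cite{Rep} and \cite{SaiaRep}.
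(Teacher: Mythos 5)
Parts (1) and (2) of your proposal are essentially the paper's argument: for (1), $\textnormal{a.irr}_{\Q}(X_0^D(N)) = 2$ from \cref{delta_eq_2_biell_thm} plus $X_0^D(N)(\Q) = \emptyset$ forces any sporadic point to be rational, which is impossible; for (2), the bound $\textnormal{a.irr}_{\Q}(X_1^D(N)) \leq \phi(N)$ from pushing quadratic points through the degree $\phi(N)/2$ cover, compared against a computation of $d_{\textnormal{CM}}(X_1^D(N))$, is exactly what the paper does.

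Parts (3) and (4) contain a genuine logical gap: you propose to certify sporadic CM points by comparing $d_{\textnormal{CM}}$ against an \emph{upper} bound for $\textnormal{a.irr}_{\Q}$, but a point $x$ is sporadic precisely when $\deg(x) < \textnormal{a.irr}_{\Q}$, so what you need is a \emph{lower} bound on $\textnormal{a.irr}_{\Q}$ strictly exceeding $\deg(x)$; an upper bound $U \geq \textnormal{a.irr}_{\Q}$ together with $d_{\textnormal{CM}} < U$ certifies nothing. Moreover, re-running the Abramovich--Frey machinery on the leftover pairs gains nothing, since those bounds were already exhausted in producing the list of unresolved pairs in \cref{sp_thm_Saia}. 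The new input that actually closes these cases is the \emph{completeness} of the classification in \cref{delta_eq_2}: by Harris--Silverman, any pair $(D,N)$ with $g \geq 2$ that is neither hyperelliptic nor bielliptic onto a positive-rank elliptic curve, i.e., any pair not on that finite list, satisfies $\textnormal{a.irr}_{\Q}(X_0^D(N)) > 2$. Combined with the computation $d_{\textnormal{CM}}(X_0^D(N)) = 2$, this makes every quadratic CM point sporadic, which is how the paper resolves all but $56$ of the $320$ pairs in part (3). Part (4) is handled the same way, using $\textnormal{a.irr}_{\Q}(X_0^D(N)) \leq \textnormal{a.irr}_{\Q}(X_1^D(N))$ (images of low-degree points under $X_1^D(N) \to X_0^D(N)$ have no larger degree) together with $d_{\textnormal{CM}}(X_1^D(N)) = 2$; your suggestion to sharpen upper bounds on $\textnormal{a.irr}_{\Q}(X_1^D(N))$ via fiber-splitting over the bielliptic locus points in the wrong direction for this purpose.
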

\begin{proof}
    \begin{enumerate}
        \item For these pairs we have proven that $\textnormal{a.irr}_\mathbb{Q}(X_0^D(N)) = 2$ by virtue of having a bielliptic quotient with positive rank over $\mathbb{Q}$. We know that $X_0^D(N)(\mathbb{Q}) = \emptyset$ for $D>1$, so these curves cannot have sporadic points. 
        \item For these pairs, we have $\text{a.irr}(X_0^D(N)) = 2$, giving the inequality
        \[ \text{a.irr}(X_1^D(N)) \leq 2 \cdot \text{deg}\left(X_1^D(N) 
        \rightarrow X_0^D(N)\right) = 2 \cdot \text{max}\{1,\phi(N)/2)\}. \]
        We compute for each that
        \[ \text{max}\{2,\phi(N)\} \leq d_\textnormal{CM}(X_1^D(N)), \]
        and thus there are no sporadic CM points. 
        \item For these $320$ unknown pairs, we compute that $d_{\text{CM}}(X_0^D(N)) = 2$ for all but $56$. For all of these pairs, the curve $X_0^D(N)$ is not among those listed in \cref{delta_eq_2}, and thus any CM point of degree $2$ is sporadic.
        \item If $2 < \textnormal{a.irr}(X_0^D(N)) \leq \textnormal{a.irr}(X_1^D(N))$ and $d_{\textnormal{CM}}(X_1^D(N)) = 2$, then each quadratic CM point on $X_1^D(N)$ is sporadic. Of the remaining $336$ pairs, there are $73$ that satisfy these conditions and thus have a sporadic CM point.  
    \end{enumerate}
    Code for all computations mentioned above can be found in \texttt{narrow{\_}sporadics.m} in \cite{Rep}. 
\end{proof}

\begin{longtable}{| c | c | c | c || c | c | c | c |}\caption{The $56$ pairs $(D,N)$ with $D>1$ and $\gcd(D,N) = 1$ for which we remain unsure of whether $X_0^D(N)$ has a sporadic point}\label{unknown_X0_table} \\ \hline 
$D$ & $N$ & $g(X_0^D(N))$ & $d_\textnormal{CM}(X_0^D(N))$ & $D$ & $N$ & $g(X_0^D(N))$ & $d_\textnormal{CM}(X_0^D(N))$ \\ \hline \hline

$ 6 $
& $ 155 $ & $ 33 $ & $ 4 $ & $ 51 $ & $ 5 $ & $ 17 $ & $ 4 $ \\ \hline
& $ 203 $ & $ 41 $ & $ 4 $ & & $ 10 $ & $ 49 $ & $ 4 $ \\ \hline
& $ 287 $ & $ 57 $ & $ 4 $ & & $ 20 $ & $ 97 $ & $ 6 $ \\ \hline
& $ 295 $ & $ 61 $ & $ 4 $ & $ 55 $ & $ 8 $ & $ 41 $ & $ 4 $ \\ \hline
& $ 319 $ & $ 61 $ & $ 4 $ & $ 62 $ & $ 15 $ & $ 61 $ & $ 4 $ \\ \hline
$ 10 $
& $ 69 $ & $ 33 $ & $ 4 $ & $69$ & $ 11 $ & $ 45 $ & $ 4 $ \\ \hline
& $ 77 $ & $ 33 $ & $ 4 $ & $ 77 $ & $ 6 $ & $ 61 $ & $ 4 $ \\ \hline
& $ 87 $ & $ 41 $ & $ 4 $ & $ 86 $ & $ 7 $ & $ 29 $ & $ 4 $ \\ \hline
& $ 119 $ & $ 49 $ & $ 4 $ & $ 87 $ & $ 8 $ & $ 57 $ & $ 4 $ \\ \hline
& $ 141 $ & $ 65 $ & $ 4 $ & $ 95 $ & $ 3 $ & $ 25 $ & $ 4 $ \\ \hline
& $ 161 $ & $ 65 $ & $ 4 $ & $ 111 $ & $ 2 $ & $ 19 $ & $ 4 $ \\ \hline
& $ 191 $ & $ 65 $ & $ 4 $ & & $ 4 $ & $ 37 $ & $ 4 $ \\ \hline
$ 14 $
& $ 39 $ & $ 29 $ & $ 4 $ & $ 119 $ & $ 6 $ & $ 97 $ & $ 6 $ \\ \hline
& $ 87 $ & $ 61 $ & $ 4 $ & $ 122 $ & $ 7 $ & $ 41 $ & $ 4 $ \\ \hline
& $ 95 $ & $ 61 $ & $ 4 $ & $ 129 $ & $ 7 $ & $ 57 $ & $ 4 $ \\ \hline
$ 15 $
& $ 34 $ & $ 37 $ & $ 4 $ & $ 134 $ & $ 3 $ & $ 23 $ & $ 4 $ \\ \hline
$ 21 $
& $ 38 $ & $ 61 $ & $ 4 $ & & $ 9 $ & $ 67 $ & $ 4 $ \\ \hline
$ 22 $
& $ 35 $ & $ 41 $ & $ 4 $ & $ 143 $ & $ 2 $ & $ 31 $ & $ 4 $ \\ \hline
& $ 51 $ & $ 61 $ & $ 4 $ & & $ 4 $ & $ 61 $ & $ 4 $ \\ \hline
$ 26 $
& $ 21 $ & $ 33 $ & $ 4 $ & $ 146 $ & $ 7 $ & $ 49 $ & $ 4 $ \\ \hline
$ 33 $
& $ 16 $ & $ 41 $ & $ 4 $ & $ 183 $ & $ 5 $ & $ 61 $ & $ 4 $ \\ \hline
$ 34 $
& $ 29 $ & $ 41 $ & $ 4 $ & $ 194 $ & $ 3 $ & $ 33 $ & $ 4 $ \\ \hline
& $ 35 $ & $ 65 $ & $ 4 $ & $ 215 $ & $ 2 $ & $ 43 $ & $ 4 $ \\ \hline
$ 35 $
& $ 12 $ & $ 49 $ & $ 4 $ & & $ 3 $ & $ 57 $ & $ 4 $ \\ \hline
$ 38 $
& $ 21 $ & $ 49 $ & $ 4 $ & $ 326 $ & $ 3 $ & $ 55 $ & $ 4 $ \\ \hline
$ 39 $
& $ 10 $ & $ 37 $ & $ 4 $ & $ 327 $ & $ 2 $ & $ 55 $ & $ 4 $ \\ \hline
& $ 31 $ & $ 65 $ & $ 4 $ & $ 335 $ & $ 2 $ & $ 67 $ & $ 4 $ \\ \hline
$ 46 $
& $ 15 $ & $ 45 $ & $ 4 $ & $ 390 $ & $ 7 $ & $ 65 $ & $ 4 $ \\ \hline

\end{longtable}

\begin{longtable}{|c|c|c|c|c|c|c|c|}\caption{The $207$ pairs $(D,N)$ with $D>1$ and $\text{gcd}(D,N) = 1$ which are not included in \cref{unknown_X0_table} for which we remain unsure of whether $X_1^D(N)$ has a sporadic point}\label{unknown_X1_table} \\ \hline 
$ ( 6 , 5 ) $ & 
$ ( 6 , 7 ) $ & 
$ ( 6 , 13 ) $ & 
$ ( 6 , 17 ) $ & 
$ ( 6 , 19 ) $ & 
$ ( 6 , 25 ) $ & 
$ ( 6 , 29 ) $ & 
$ ( 6 , 31 ) $ \\ \hline
$ ( 6 , 35 ) $ & 
$ ( 6 , 37 ) $ & 
$ ( 6 , 41 ) $ & 
$ ( 6 , 43 ) $ & 
$ ( 6 , 47 ) $ & 
$ ( 6 , 49 ) $ & 
$ ( 6 , 53 ) $ & 
$ ( 6 , 55 ) $ \\ \hline
$ ( 6 , 59 ) $ & 
$ ( 6 , 61 ) $ & 
$ ( 6 , 65 ) $ & 
$ ( 6 , 67 ) $ & 
$ ( 6 , 73 ) $ & 
$ ( 6 , 77 ) $ & 
$ ( 6 , 79 ) $ & 
$ ( 6 , 83 ) $ \\ \hline
$ ( 6 , 85 ) $ & 
$ ( 6 , 89 ) $ & 
$ ( 6 , 91 ) $ & 
$ ( 6 , 95 ) $ & 
$ ( 6 , 97 ) $ & 
$ ( 6 , 101 ) $ & 
$ ( 6 , 103 ) $ & 
$ ( 6 , 107 ) $ \\ \hline
$ ( 6 , 109 ) $ & 
$ ( 6 , 113 ) $ & 
$ ( 6 , 115 ) $ & 
$ ( 6 , 119 ) $ & 
$ ( 6 , 121 ) $ & 
$ ( 6 , 125 ) $ & 
$ ( 6 , 127 ) $ & 
$ ( 6 , 131 ) $ \\ \hline
$ ( 6 , 133 ) $ & 
$ ( 6 , 137 ) $ & 
$ ( 6 , 139 ) $ & 
$ ( 6 , 143 ) $ & 
$ ( 6 , 145 ) $ & 
$ ( 6 , 149 ) $ & 
$ ( 6 , 151 ) $ & 
$ ( 6 , 157 ) $ \\ \hline
$ ( 6 , 161 ) $ & 
$ ( 6 , 163 ) $ & 
$ ( 6 , 167 ) $ & 
$ ( 6 , 169 ) $ & 
$ ( 6 , 173 ) $ & 
$ ( 6 , 179 ) $ & 
$ ( 6 , 181 ) $ & 
$ ( 6 , 191 ) $ \\ \hline
$ ( 6 , 193 ) $ & 
$ ( 6 , 197 ) $ & 
$ ( 6 , 199 ) $ & 
$ ( 10 , 7 ) $ & 
$ ( 10 , 9 ) $ & 
$ ( 10 , 13 ) $ & 
$ ( 10 , 19 ) $ & 
$ ( 10 , 21 ) $ \\ \hline
$ ( 10 , 27 ) $ & 
$ ( 10 , 31 ) $ & 
$ ( 10 , 33 ) $ & 
$ ( 10 , 37 ) $ & 
$ ( 10 , 39 ) $ & 
$ ( 10 , 41 ) $ & 
$ ( 10 , 43 ) $ & 
$ ( 10 , 47 ) $ \\ \hline
$ ( 10 , 49 ) $ & 
$ ( 10 , 51 ) $ & 
$ ( 10 , 53 ) $ & 
$ ( 10 , 57 ) $ & 
$ ( 10 , 59 ) $ & 
$ ( 10 , 61 ) $ & 
$ ( 10 , 63 ) $ & 
$ ( 10 , 67 ) $ \\ \hline
$ ( 10 , 71 ) $ & 
$ ( 10 , 73 ) $ & 
$ ( 10 , 79 ) $ & 
$ ( 10 , 83 ) $ & 
$ ( 10 , 89 ) $ & 
$ ( 10 , 91 ) $ & 
$ ( 10 , 97 ) $ & 
$ ( 10 , 103 ) $ \\ \hline
$ ( 14 , 5 ) $ & 
$ ( 14 , 9 ) $ & 
$ ( 14 , 11 ) $ & 
$ ( 14 , 13 ) $ & 
$ ( 14 , 15 ) $ & 
$ ( 14 , 17 ) $ & 
$ ( 14 , 19 ) $ & 
$ ( 14 , 23 ) $ \\ \hline
$ ( 14 , 25 ) $ & 
$ ( 14 , 27 ) $ & 
$ ( 14 , 29 ) $ & 
$ ( 14 , 31 ) $ & 
$ ( 14 , 33 ) $ & 
$ ( 14 , 37 ) $ & 
$ ( 14 , 41 ) $ & 
$ ( 14 , 43 ) $ \\ \hline
$ ( 14 , 47 ) $ & 
$ ( 14 , 53 ) $ & 
$ ( 14 , 59 ) $ & 
$ ( 14 , 61 ) $ & 
$ ( 15 , 8 ) $ & 
$ ( 15 , 11 ) $ & 
$ ( 15 , 13 ) $ & 
$ ( 15 , 14 ) $ \\ \hline
$ ( 15 , 16 ) $ & 
$ ( 15 , 17 ) $ & 
$ ( 15 , 19 ) $ & 
$ ( 15 , 22 ) $ & 
$ ( 15 , 23 ) $ & 
$ ( 15 , 26 ) $ & 
$ ( 15 , 28 ) $ & 
$ ( 15 , 29 ) $ \\ \hline
$ ( 15 , 31 ) $ & 
$ ( 15 , 32 ) $ & 
$ ( 15 , 37 ) $ & 
$ ( 15 , 41 ) $ & 
$ ( 15 , 43 ) $ & 
$ ( 15 , 47 ) $ & 
$ ( 21 , 8 ) $ & 
$ ( 21 , 11 ) $ \\ \hline
$ ( 21 , 13 ) $ & 
$ ( 21 , 16 ) $ & 
$ ( 21 , 17 ) $ & 
$ ( 21 , 19 ) $ & 
$ ( 21 , 23 ) $ & 
$ ( 21 , 25 ) $ & 
$ ( 21 , 29 ) $ & 
$ ( 21 , 31 ) $ \\ \hline
$ ( 22 , 5 ) $ & 
$ ( 22 , 7 ) $ & 
$ ( 22 , 9 ) $ & 
$ ( 22 , 13 ) $ & 
$ ( 22 , 15 ) $ & 
$ ( 22 , 17 ) $ & 
$ ( 22 , 19 ) $ & 
$ ( 22 , 21 ) $ \\ \hline
$ ( 22 , 23 ) $ & 
$ ( 22 , 25 ) $ & 
$ ( 22 , 27 ) $ & 
$ ( 22 , 29 ) $ & 
$ ( 22 , 31 ) $ & 
$ ( 22 , 37 ) $ & 
$ ( 26 , 5 ) $ & 
$ ( 26 , 7 ) $ \\ \hline
$ ( 26 , 9 ) $ & 
$ ( 26 , 11 ) $ & 
$ ( 26 , 15 ) $ & 
$ ( 26 , 17 ) $ & 
$ ( 26 , 19 ) $ & 
$ ( 26 , 23 ) $ & 
$ ( 26 , 25 ) $ & 
$ ( 26 , 29 ) $ \\ \hline
$ ( 26 , 31 ) $ & 
$ ( 33 , 8 ) $ & 
$ ( 33 , 13 ) $ & 
$ ( 33 , 17 ) $ & 
$ ( 33 , 19 ) $ & 
$ ( 34 , 5 ) $ & 
$ ( 34 , 9 ) $ & 
$ ( 34 , 11 ) $ \\ \hline
$ ( 34 , 13 ) $ & 
$ ( 34 , 15 ) $ & 
$ ( 34 , 19 ) $ & 
$ ( 34 , 23 ) $ & 
$ ( 35 , 8 ) $ & 
$ ( 35 , 9 ) $ & 
$ ( 35 , 11 ) $ & 
$ ( 35 , 13 ) $ \\ \hline
$ ( 38 , 7 ) $ & 
$ ( 38 , 9 ) $ & 
$ ( 38 , 11 ) $ & 
$ ( 38 , 13 ) $ & 
$ ( 38 , 17 ) $ & 
$ ( 39 , 5 ) $ & 
$ ( 39 , 7 ) $ & 
$ ( 39 , 8 ) $ \\ \hline
$ ( 39 , 11 ) $ & 
$ ( 46 , 9 ) $ & 
$ ( 46 , 11 ) $ & 
$ ( 46 , 13 ) $ & 
$ ( 46 , 17 ) $ & 
$ ( 51 , 8 ) $ & 
$ ( 51 , 11 ) $ & 
$ ( 57 , 7 ) $ \\ \hline
$ ( 58 , 5 ) $ & 
$ ( 58 , 9 ) $ & 
$ ( 58 , 11 ) $ & 
$ ( 58 , 13 ) $ & 
$ ( 62 , 7 ) $ & 
$ ( 62 , 9 ) $ & 
$ ( 62 , 11 ) $ & 
$ ( 65 , 7 ) $ \\ \hline
$ ( 74 , 5 ) $ & 
$ ( 74 , 7 ) $ & 
$ ( 82 , 5 ) $ & 
$ ( 87 , 5 ) $ & 
$ ( 91 , 5 ) $ & 
$ ( 106 , 5 ) $ & 
$ ( 122 , 5 ) $ & \\ \hline 

\end{longtable}

\section{There are no geometrically trigonal Shimura curves $X_0^D(N)$}\label{trigonal_section}

Shimura curves with $D>1$ have no real points. In particular, they have no odd-degree points. If $X$ is such a curve and $X$ has a degree $d$ map to either $\mathbb{P}^1_{\mathbb{Q}}$ or an elliptic curve $E$ over $\mathbb{Q}$ (moreover, over $\mathbb{R}$), then it follows that $d$ is even. On the other hand, this does not preclude the existence of Shimura curves which have odd geometric gonality, or which admit odd-degree maps to elliptic curves over $\Qbar$. 

There do indeed exist geometrically trielliptic Shimura curves. For example, if $D>1$ is odd and $X_0^D(1)$ has genus $1$ (so, is a pointless genus $1$ curve over $\mathbb{Q}$), then $X_0^D(2)$ is a degree $3$ cover of $X_0^D(1)$ and hence is trielliptic over a degree $2$ extension. This applies to $D \in \{15,21,33\}$. Similarly, $X_0^{10}(9)$ is geometrically trielliptic with a degree $3$ map to $X_0^{10}(3)$. (It immediately follows from these examples that $\text{a.irr}_{\mathbb{Q}}(X_0^{D}(N)) \leq 6$ for $(D,N) \in \{(10,9),(21,2),(33,2)\}$. However, we know from \cref{table: non-squarefree} and \cref{table: squarefree} that each of these three pairs is also \emph{bielliptic} over $\Q$, and so in fact we have $\text{a.irr}_{\mathbb{Q}}(X_0^{D}(N)) = 4$ for $(D,N) \in \{(10,9),(21,2),(33,2)\}$.)

The question of whether the above examples, coming from natural modular maps, are the only geometrically trielliptic Shimura curves in this family will be relegated to future work. On the other hand, we \emph{will} determine here all of the geometrically trigonal Shimura curves $X_0^D(N)$ with $\gcd(D,N) = 1$.    

\begin{definition}\label{trigonal_def}
A curve $X$ of genus $g \ge 2$ over a number field $F$ is \textbf{trigonal (over $F$)} if there is a degree $3$ finite map $X \rightarrow \mathbb{P}^1_F$. We call $X$ \textbf{geometrically trigonal} if there exists a non-constant morphism 
\[ X \otimes_{\text{Spec} F} \text{Spec} \overline{F} \longrightarrow \P^1_{\overline{F}} \] 
of degree $3$.
\end{definition}


The following two results of Schweizer will be our primary tools towards our main result of this section. 

\begin{lemma}\cite[Lemma 3.4]{Schweizer15} \label{lemma: trigonal}
Let $X$ be a trigonal curve of genus $g$ and $\sigma$ an involution on $X$.
\begin{enumerate}[(a)]
    \item If $g$ is odd, then $\sigma$ has exactly $4$ fixed points.
    \item If $g$ is even, then $\sigma$ has $2$ or $6$ fixed points.
\end{enumerate}
\end{lemma}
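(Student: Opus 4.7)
The plan is to leverage the fact that on a trigonal curve of sufficiently high genus the gonal pencil is unique, forcing any involution to preserve it, and then to count fixed points by analyzing the two special fibers over the fixed locus of the induced involution on $\P^1$, combined with Riemann--Hurwitz.

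Assuming $g \geq 5$, by Castelnuovo--Severi the $g^1_3$ on $X$ is unique: two independent degree-$3$ pencils would realize $X$ in $\P^1 \times \P^1$ as a curve of geometric genus at most $4$. Hence $\sigma$ preserves the gonal pencil, producing a commutative square relating the gonal map $\pi \colon X \to \P^1$ of degree $3$ to an induced involution $\bar{\sigma}$ on $\P^1$. I would then argue $\bar{\sigma} \neq \mathrm{id}$: otherwise $\pi$ is $\sigma$-invariant and factors as $\pi = \bar{\pi} \circ q$ through the quotient $q \colon X \to X/\sigma$, giving $3 = \deg q \cdot \deg \bar{\pi} = 2 \deg \bar{\pi}$, which is impossible. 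So $\bar{\sigma}$ has exactly two fixed points $p_1, p_2 \in \P^1$, and all fixed points of $\sigma$ lie in $\pi^{-1}(p_1) \cup \pi^{-1}(p_2)$ since $\sigma$ interchanges the fibers over any $q \neq \bar{\sigma}(q)$.

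Next, a case analysis of the possible ramification profile of $\pi$ above each $p_i$ --- (a) three distinct points, in which case $\sigma$ either acts as the identity or swaps two and fixes one; (b) ramification profile $(2,1)$, forcing $\sigma$ to fix both points of the support; or (c) total ramification, yielding the unique ramified point as a fixed point --- shows that each invariant fiber contributes between $1$ and $3$ fixed points, so $2 \leq \#X^{\sigma} \leq 6$. Riemann--Hurwitz applied to $X \to X/\sigma$ gives $\#X^{\sigma} = 2g + 2 - 4g(X/\sigma)$, so $\#X^{\sigma} \equiv 2g + 2 \pmod 4$. When $g$ is odd, this forces $\#X^{\sigma} \equiv 0 \pmod 4$, and the only value in $[2,6]$ is $4$; when $g$ is even, it forces $\#X^{\sigma} \equiv 2 \pmod 4$, leaving exactly the two values $\{2,6\}$.

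The main obstacle is the low-genus range $g \in \{3, 4\}$, where uniqueness of the $g^1_3$ fails: a non-hyperelliptic genus-$3$ curve carries a one-parameter family of $g^1_3$'s (one projecting from each point of the plane quartic model), and a trigonal genus-$4$ curve may carry two distinct $g^1_3$'s coming from the two rulings of a smooth quadric in $\P^3$. In these cases $\sigma$ need not preserve any single $g^1_3$, so the preservation step breaks down and the argument above does not directly apply. One would treat these cases separately, e.g.\ via direct analysis on the canonical model or by studying the action of $\sigma$ on the (finite or one-dimensional) variety parametrizing $g^1_3$'s on $X$.
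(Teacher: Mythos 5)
This lemma is imported verbatim from \cite[Lemma 3.4]{Schweizer15}; the paper gives no proof of its own, so I can only measure your argument against the standard one, which is essentially what you have written. For $g\ge 5$ your proof is correct and complete: Castelnuovo--Severi forces the $g^1_3$ to be unique, so $\sigma$ descends to an involution $\bar\sigma$ of $\P^1$, which is nontrivial because $3$ is odd; the two invariant fibers each contribute between $1$ and $3$ fixed points (the multiplicity-preservation argument in the $(2,1)$ case is the right observation), giving $2\le \#X^\sigma\le 6$; and Riemann--Hurwitz pins down the residue of $\#X^\sigma$ modulo $4$. Note in particular that the lower bound $\#X^\sigma\ge 2$ is genuinely needed in the odd-genus case to exclude $\#X^\sigma=0$, and your fiber analysis does supply it.

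The low-genus ``obstacle'' you flag is real but smaller than you make it out to be, and since the paper applies the lemma to curves of genus as low as $2$ and $4$ (the quoted statement has no genus restriction), those cases must be closed. First observe that Castelnuovo--Severi applied to a $g^1_2$ and a $g^1_3$ shows a trigonal curve of genus $\ge 3$ is not hyperelliptic. For $g=4$, Riemann--Hurwitz gives $\#X^\sigma=10-4g(X/\sigma)\in\{2,6,10\}$, and $10$ would force $g(X/\sigma)=0$, i.e.\ a hyperelliptic involution; so $\#X^\sigma\in\{2,6\}$ with no need to control the two rulings of the quadric at all. For $g=2$ the same computation gives $\#X^\sigma=6-4g(X/\sigma)\in\{2,6\}$ unconditionally. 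The only case where extra geometry is required is $g=3$: there $\#X^\sigma=8-4g(X/\sigma)\in\{0,4,8\}$, with $8$ excluded by non-hyperellipticity, and $0$ is excluded because the canonical model is a smooth plane quartic on which $\sigma$ acts by a linear involution of $\P^2$ whose fixed locus contains a line, and that line meets the quartic; hence $\#X^\sigma=4$. With that paragraph added, your proof is complete and follows the same route as Schweizer's.
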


\begin{proposition}\cite[Corollary 3.5]{Schweizer15}
    Let $X$ be a curve of genus $g \equiv 1 \;(\bmod \; 4)$. If $\Aut(X)$ has a subgroup $H$ isomorphic to $\Z/2\Z \oplus \Z/2\Z$, then $X$ cannot be trigonal. 
\end{proposition}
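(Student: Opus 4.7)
The plan is to argue by contradiction. Suppose $X$ is trigonal with $g \equiv 1 \;(\bmod \; 4)$, and fix a subgroup $H = \{1, \sigma_1, \sigma_2, \sigma_3\} \subseteq \Aut(X)$ isomorphic to $\Z/2\Z \oplus \Z/2\Z$. Since $g$ is in particular odd, \cref{lemma: trigonal}(a) applies to each of the three non-trivial involutions, yielding $\#X^{\sigma_i} = 4$ for $i = 1, 2, 3$.

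Next I would compute the genus of the quotient $Y := X/H$ via Riemann--Hurwitz applied to the degree-$4$ Galois cover $\pi : X \to Y$. Because any two distinct non-trivial elements of a Klein four-group generate it, the stabilizer of any $P \in X$ under $H$ is either trivial, equal to $\langle \sigma_i \rangle$ for some $i$, or equal to $H$. Setting $f_H := \#X^H$ and $n_i := \#\{P \in X : \mathrm{Stab}_H(P) = \langle \sigma_i \rangle\}$, one has $\#X^{\sigma_i} = n_i + f_H$ for each $i$. Points with stabilizer of order $2$ contribute $1$ each to the ramification divisor of $\pi$, while points with stabilizer $H$ contribute $3$; the Riemann--Hurwitz formula therefore reads
$$2g - 2 \;=\; 4(2g(Y) - 2) + \sum_{i=1}^{3} n_i + 3 f_H \;=\; 8 g(Y) - 8 + \sum_{i=1}^3 \#X^{\sigma_i},$$
the $f_H$-contributions having cancelled. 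Substituting $\#X^{\sigma_i} = 4$ yields $g(Y) = (g - 3)/4$.

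Finally, $g(Y)$ must be a non-negative integer, which forces $g \equiv 3 \;(\bmod \; 4)$, contradicting the hypothesis. There is no significant obstacle in the argument; the only step requiring real care is the Riemann--Hurwitz bookkeeping for the $(\Z/2\Z)^2$-cover, which reduces to the routine classification of stabilizer subgroups inside the Klein four-group.
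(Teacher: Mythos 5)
Your argument is correct. The paper itself gives no proof of this statement (it is quoted directly from Schweizer, where it is deduced as a corollary of the fixed-point lemma), and your derivation --- each of the three involutions of the Klein four-group has exactly $4$ fixed points by part (a) of the lemma since $g$ is odd, and the Riemann--Hurwitz count for the degree-$4$ Galois quotient then forces $g(X/H) = (g-3)/4$, which is not an integer when $g \equiv 1 \pmod 4$ --- is precisely the intended deduction; the stabilizer bookkeeping and the cancellation of the $f_H$-terms are handled correctly.
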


Using the above results, we obtain the following. 

\begin{theorem}\label{proposition: trigonal-prop}
    The geometrically trigonal Shimura curves $X_0^D(N)$ with $\gcd(D,N)=1$ and $D>1$ are exactly those with
    \[ (D,N) \in \{(26,1),(38,1),(58,1),(106,1),(118,1)\} . \]
\end{theorem}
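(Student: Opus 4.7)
The plan is to first reduce to finitely many candidate pairs via the gonality--genus inequality, then eliminate candidates using Schweizer-type constraints on fixed points of involutions of trigonal curves, and finally verify the five surviving cases directly, mirroring the structure of the bielliptic proof in \S 5.

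First, if $X_0^D(N)$ is geometrically trigonal then $\gon_{\C}(X_0^D(N)) \leq 3$, so by \cref{theorem: Abr} we have
\[
g(X_0^D(N)) \leq \tfrac{200}{21}\cdot 3 + 1 < 30,
\]
giving $g(X_0^D(N)) \leq 29$. Combined with \cref{genus_bnd_lemma}, this produces an explicit finite list of candidate pairs $(D,N)$, computed exactly as in \S 5 for the bielliptic case.

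Next, for each candidate I would apply the following two eliminations. If $g(X_0^D(N)) \equiv 1 \pmod{4}$ and $\omega(DN)\geq 2$, then $W_0(D,N)$ contains a subgroup isomorphic to $\Z/2\Z\oplus\Z/2\Z$, so \cite[Corollary 3.5]{Schweizer15} forces $X_0^D(N)$ to not be trigonal. Otherwise, for each Atkin--Lehner involution $w_m$ I would compute its number of fixed points using \cref{thm_Ogg_fixed_pts} together with \cref{thm_local_emb_Ogg}, and then invoke \cref{lemma: trigonal}: on a trigonal curve, an involution fixes exactly $4$ points if the genus is odd, and $2$ or $6$ points if the genus is even. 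Any $w_m$ whose fixed-point count disagrees with these eliminates $(D,N)$ from consideration. For this step to suffice it is necessary that $\Aut(X_0^D(N)) = W_0(D,N)$, which holds for most candidates by Kontogeorgis--Rotger (\cref{all_atkin_lehner_lemma}, \cref{all_atkin_lehner_lemma_2}); for candidates to which these do not apply, a targeted ad hoc argument as in \S 5.1 would be needed.

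For the surviving candidates $(D,N) \in \{(26,1),(38,1),(58,1),(106,1),(118,1)\}$, trigonality must be verified. The first three have genus $2$: any divisor $D$ of degree $3$ has $h^0(D)\geq 2$ by Riemann--Roch, and for a generic such $D$ the corresponding pencil is base-point-free and gives a degree-$3$ map to $\P^1$. The pairs $(106,1)$ and $(118,1)$ have genus $4$ and are non-hyperelliptic (by the Ogg--Guo--Yang list recalled in \S 1), so their canonical models are complete intersections of a quadric $Q$ and a cubic in $\P^3$; the rulings of $Q$ cut out the required $g^1_3$, yielding a degree-$3$ map over $\overline{\Q}$.

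The main obstacle will be coordinating these steps so that the Schweizer-type constraints, together with the automorphism analysis of \S 5.1, eliminate precisely the candidates outside the stated list. The delicate cases are the low-genus candidates where \cref{lemma: trigonal} can be consistent with many fixed-point distributions and where one must rule out the possibility of non-Atkin--Lehner involutions; when the direct fixed-point bookkeeping of Atkin--Lehner involutions does not suffice, additional geometric input---Castelnuovo--Severi applied to small-genus Atkin--Lehner quotients, or isogeny-decomposition computations via Ribet's isogeny (\cref{Ribet_isog}) to access $\Aut(X_0^D(N))$ indirectly---would be needed to close the remaining cases.
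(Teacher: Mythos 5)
Your overall strategy is the same as the paper's: bound the genus by $29$ via \cref{theorem: Abr}, enumerate candidates, filter with Schweizer's two results, and verify the survivors (genus $2$ by Riemann--Roch, genus $4$ non-hyperelliptic via the canonical model). The one concrete gap is that you assume the Schweizer filters leave exactly the five pairs in the statement. They do not: the computation in the paper shows that \emph{six} candidates pass both Schweizer tests, the sixth being $(214,1)$, and that pair must then be shown to be non-trigonal by a separate argument. The paper does this by observing that $X = X_0^{214}(1)/\langle w_{107}\rangle$ has genus $5$ and is a degree $2$ cover of the genus $1$ quotient $X_0^{214}(1)/W_0(214,1)$, hence is geometrically bielliptic with geometric gonality exactly $4$ by Castelnuovo--Severi; since $X_0^{214}(1)$ covers $X$, it cannot have geometric gonality $3$. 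You anticipated that such a case might arise and even named Castelnuovo--Severi on small-genus Atkin--Lehner quotients as the right tool, but without identifying and resolving $(214,1)$ the proof does not close, since that curve would otherwise sit in your ``surviving'' list and falsify the stated classification.

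A smaller point: your requirement that $\Aut(X_0^D(N)) = W_0(D,N)$ for the elimination step is not needed. \cref{lemma: trigonal} says that \emph{if} the curve is trigonal then \emph{every} involution has the prescribed number of fixed points, so exhibiting a single Atkin--Lehner involution with the wrong fixed-point count already rules out trigonality; one never needs to know the full automorphism group. Likewise, $W_0(D,N) \cong (\Z/2\Z)^{\omega(DN)}$ with $\omega(DN) \ge 2$ always supplies the Klein four-subgroup for Schweizer's Corollary 3.5. So the automorphism analysis of \S 5.1 plays no role here, and the genuinely missing ingredient is only the treatment of $(214,1)$.
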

\begin{proof}
    If $X_0^D(N)$ is geometrically trigonal, then by \cref{theorem: Abr} we must have $g(X_0^D(N)) \le 29$. There are $455$ relatively prime pairs $(D,N)$ with $D>1$ such that $g(X_0^D(N)) \le 29$. These are computed in \texttt{generating\_trigonal\_candidates.m} and listed in \texttt{trigonal\_candidate\_pairs.m} in \cite{Rep}. In the file \texttt{trigonal\_checks.m}, we compute that the only candidates pairs which have $g(X_0^D(N)) \ge 2$ and satisfy the requirements to be trigonal from both of the above results of Schweizer are 
    \[ (D,N) \in \{(26,1),(38,1),(58,1),(106,1),(118,1),(214,1)\}. \]
    The curves $X_0^D(1)$ for $D \in \{26,38,58\}$ are all genus $2$, and hence are also geometrically trigonal (one can see this from a Riemann--Roch argument, considering the divisor $3[P]$ for any point $P \in X_0^D(1)(\C)$). For $D \in \{106,118\}$, the curve $X_0^D(1)$ is non-hyperelliptic of genus $4$ and hence is geometrically trigonal (one can see this implication, for example, from \cite[Prop. A.1. (v)]{Poonen}). The final candidate curve $X_0^{214}(1)$ is \emph{not} trigonal. We can see this as follows: the quotients $X = X_0^{214}(1)/\langle w_{107} \rangle$ and $Y = X_0^{214}(1)/W_0(214,1)$ are of genus $5$ and genus $1$, respectively. The curve $X$ is geometrically bielliptic, being a degree $2$ cover of $Y$. It must then have geometric gonality exactly $4$ by a Castelnuovo--Severi inequality argument, given that its genus is $5$. As $X_0^{214}(1)$ is a cover of $X$, it too cannot have geometric gonality $3$ by \cite[Prop. A.1 (vii)]{Poonen}. 
\end{proof}

\noindent \textbf{Competing Interest Statement:} The authors have no competing interests to declare that are relevant to the content of this article. \\

\noindent \textbf{Data Availability Statement:} All code used to perform the computations described in this paper, along with data generated from this code, can be found in the public Github repository \cite{Rep}.

\bibliographystyle{amsalpha}
\bibliography{biblio}
\end{document}